\newtheorem{theorem}{Theorem}
\newtheorem{lemma}{Lemma}
\newtheorem{corollary}{Corollary}
\newcommand{\norm}[1]{\left\lVert#1\right\rVert}
\theoremstyle{definition}
\DeclareMathOperator{\Exp}{\mathrm{\mathbb{E}}}
\newcommand{\sgd}{\textsc{Sgd}\xspace} 
\newcommand{\igd}{\textsc{Igd}\xspace}
\newcommand{\gd}{\textsc{Gd}\xspace}
\newcommand{\rsgd}{\textsc{RandomShuffle}\xspace}
\newcommand{\reals}{\mathbb{R}}
\newcommand{\Oc}{\mathcal{O}}
\newcommand{\nf}{\nicefrac}
\numberwithin{equation}{section}
\begin{document}

\title{Random Shuffling Beats SGD after Finite Epochs}

\author{
  \name Jeff Z. HaoChen \email{jhaochen@stanford.edu}\\
  \addr{Stanford University}\\[4pt]
  \name Suvrit Sra \email{suvrit@mit.edu}\\
  \addr{Massachusetts Institute of Technology}
}

\maketitle
 
\begin{abstract}
A long-standing problem in the theory of stochastic gradient descent (\sgd) is to prove that its without-replacement version \rsgd converges faster than the usual with-replacement version. We present the \emph{first} (to our knowledge) non-asymptotic solution to this problem, which shows that after a ``reasonable'' number of epochs \rsgd indeed converges faster than \sgd. Specifically, we prove that under strong convexity and second-order smoothness, the sequence generated by \rsgd converges to the optimal solution at the rate $\mathcal{O}\bigl(\frac{1}{T^2} + \frac{n^3}{T^3}\bigr)$, where $n$ is the number of components in the objective, and $T$ is the total number of iterations. This result shows that after a reasonable number of epochs \rsgd is \emph{strictly better} than \sgd (which converges as $\mathcal{O}\bigl(\frac{1}{T}\bigr)$). The key step toward showing this better dependence on $T$ is the introduction of $n$ into the bound; and as our analysis will show, in general a dependence on $n$ is unavoidable without further changes to the algorithm. We show that for sparse data \rsgd has the rate $\mathcal{O}\left(\frac{1}{T^2}\right)$, again strictly better than \sgd. Furthermore, we discuss extensions to nonconvex gradient dominated functions, as well as non-strongly convex settings.
\end{abstract}

\section{Introduction}
We consider stochastic optimization methods for the finite-sum problem
\begin{align}
  F\left(x\right) := \frac{1}{n} \sum_{i=1}^n f_i\left(x\right),\label{1}
\end{align}
where each function $f_i : \reals^d\rightarrow \reals$ is smooth and convex, and the sum $F$ is strongly convex. A classical approach to solving~\eqref{1} is stochastic gradient descent (\sgd). At each iteration \sgd independently samples an index $i$ uniformly from $\{1,\ldots,n\}$, and uses the (stochastic) gradient $\nabla f_i$ to compute its update. The stochasticity makes each iteration of \sgd cheap, and the uniformly independent sampling of $i$ makes $\nabla f_i$ an unbiased estimator of the full gradient $\nabla F$. These properties are central to \sgd's effectiveness in large scale machine learning, and underlie much of its theoretical analysis (see for instance,  \citep{sra2012optimization,rakhlin2012making,bertsekas2011incremental,bottou2016optimization,shalev2014understanding}).

However, what is actually used in practice is the \emph{without replacement} version of \sgd, henceforth called \rsgd. Specifically, at each epoch \rsgd samples a random permutation of the $n$ functions uniformly independently (some implementations shuffle the data only once at load, rather than at each epoch). Then, it iterates over these functions according to the sampled permutation and updates in a manner similar to \sgd. Avoiding the use of random sampling at each iteration, \rsgd can be computationally more practical~\citep{bottou2012stochastic}; furthermore, as one would expect, empirically \rsgd is known to converge faster than \sgd~\citep{bottou2009curiously}.

This discrepancy between theory and practice has been a long-standing problem in the theory of \sgd. It has drawn renewed attention recently, with the goal of better understanding convergence of \rsgd. The key difficulty is that without-replacement leads to statistically non-independent samples, which greatly complicates analysis.  Two extreme case positive results are however available:  \citet{shamir2016without} shows that \rsgd is not much worse than usual \sgd, provided the number of epochs is not too large; while \citet{gurbuzbalaban2015random} show that \rsgd converges faster than \sgd asymptotically at the rate $\mathcal{O}(\frac{1}{T^2})$.

But it remains unclear what happens in between, after a reasonable finite number of epochs are run. This regime is the most compelling one to study, since in practice one runs neither one nor infinitely many epochs. This motivates the central question of our paper:
\begin{center}
  \it Does \rsgd converge faster than \sgd after a reasonable number of epochs?
\end{center}
We answer this question positively in this paper; our results are more precisely summarized below.

\begin{table}
  \caption{\small Comparison of convergence rates of \sgd and \rsgd. The first three functions considered are strongly convex. We omit all the constants from the rate (for details on  constants, please see Section~\ref{discuss}). Under the sparse setting (sparsity level $\rho$), we are not aware of specialized results corresponding to \sgd. For the LP condition functions, see definition in section~\ref{lpcondition}. Here the criterion of LP condition functions is based on suboptimal of function values, while the other three are based on distance to the unique optimal solution.}
  \label{sample-table}
  \centering
  \begin{tabular}{lllll}
    \toprule
  Algorithm & Quadratic & Lipschitz Hessian & Sparse Data & LP Condition \\
    \midrule
  \sgd & $\Oc(1/T)$& $\Oc(1/T)$ & $\Oc(1/T)$ & $\Oc(1/T)$\\
  \addlinespace[0.2em]
  \rsgd & \textcolor{red}{$\Oc(1/T^2 + n^3/T^3)$} &  \textcolor{red}{$\Oc(1/T^2 + n^3/T^3)$}& \textcolor{red}{$\Oc(1/T^2 + \rho^2 n^3 / T^3)$} & \textcolor{red}{$\Oc(1/T^2 + n^3/T^3)$}\\
    \bottomrule
  \end{tabular}
\end{table}


\subsection{Summary of results}
We follow the common practice of reporting convergence rates depending on $T$, the number of calls to the (stochastic / incremental) gradient oracle. 
For instance, \sgd converges at the rate $\mathcal{O}(\frac{1}{T})$ for  solving~\eqref{1}, ignoring logarithmic terms in the bound~\citep{rakhlin2012making}. The underlying argument is to view \sgd as stochastic approximation with noise~\citep{nemirov09}, therefore ignoring the finite-sum structure of~\eqref{1}. Our key observation for \rsgd is that one should reasonably include dependence on $n$ into the bound (see Section~\ref{dpn}). Such a compromise leads to a better dependence on $T$, which further shows how \rsgd beats \sgd after a finite number of epochs. Our main contributions are the following:

\begin{list}{{\small$\blacktriangleright$}}{\leftmargin=2em}
\item Under a mild assumption on second order differentiability, and assuming strong-convexity, we establish a convergence rate of $\mathcal{O}\bigl(\frac{1}{T^2} + \frac{n^3}{T^3}\bigr)$ for \rsgd, where $n$ is the number of components in~\eqref{1}, and $T$ is the total number of iterations (Theorem~\ref{thm41} and \ref{thm43}). From the bounds we can calculate the precise number of epochs after which \rsgd is \emph{strictly better} than \sgd. 
\item We prove that a dependence on $n$ is necessary for beating the \sgd rate $\Oc(\nf{1}{T})$. This tradeoff precludes the possibility of proving a convergence rate of the type $\mathcal{O}\bigl(\nf{1}{T^{1+\delta}}\bigr)$ with some $\delta>0$ in the general case, and justifies our choice of introducing $n$ into the rate (Theorem~\ref{thm42}).
\item Assuming a sparse data setting common in machine learning, we further improve the convergence rate of \rsgd to $\mathcal{O}\bigl(\nf{1}{T^2}\bigr)$. This rate is \emph{strictly better} than \sgd, indicating \rsgd's advantage in such cases (Theorem~\ref{thm52}).
\item We extend our results to the non-convex function class with Polyak-\L ojasiewicz condition, establishing a similar $\mathcal{O}\bigl(\frac{1}{T^2} + \frac{n^3}{T^3}\bigr)$ rate for \rsgd (Theorem~\ref{thpl}).
\item We show a class of examples where \rsgd is provably faster than \sgd after arbitrary number (even less than one epoch) of iterations (Theorem~\ref{last}).
\end{list}
We provide a detailed discussion of various aspects of our results in Section~\ref{discuss}, including explicit comparisons to \sgd, the role of condition numbers, as well as some limitations. Finally, we end by noting some extensions and open problems in Section~\ref{sec:conc}. As one of the extensions, for non-strongly convex problems, we prove that \rsgd achieves a comparable convergence rate as \sgd, with possibly smaller constant in the bound under certain parameter paradigms (Theorem~\ref{ext1}). 

\subsection{Related work}
\citet{Recht2012BeneathTV} conjecture a tantalizing matrix AM-GM inequality that underlies \rsgd's superiority over \sgd. While limited progress on this conjecture has  been reported~\citep{israel2016arithmetic,zhang2014note}, the correctness of the full conjecture is still wide open. With the technique of transductive Rademacher complexity, \citet{shamir2016without} shows that \sgd is not worse than \rsgd provided the number of iterations is not too large. Asymptotic analysis is provided in~\citep{gurbuzbalaban2015random}, which proves that \rsgd limits to a $\mathcal{O}\bigl(\nf{1}{T^2}\bigr)$ rate for large $T$. \citet{ying2018stochastic} show that for a fixed step size, \rsgd converges to a distribution closer to optimal than \sgd asymptotically. 

When the functions are visited in a deterministic order (e.g., cyclic), the method turns into Incremental Gradient Descent (\igd), which has a long history~\citep{bertsekas2011incremental}. \citet{kohonen1974adaptive} shows that \igd converges to a limit cycle under constant step size and quadratic functions.  Convergence to neighborhood of optimality for more general functions is studied in several works, under the assumption that step size is bounded away from zero (see for instance~\citep{solodov1998incremental}). With properly diminishing step size, \citet{nedic2001convergence} show that an $\mathcal{O}\bigl(\nf{1}{\sqrt{T}}\bigr)$ convergence rate in terms of distance to optimal can be achieved under strong convexity of the finite-sum. This rate is further improved in \citep{gurbuzbalaban2015convergence} to $\mathcal{O}(\nf{1}{T})$ under a second order differentiability assumption.

In the real world, \rsgd has been proposed as a standard heuristic~\citep{bottou2012stochastic}. With numerical experiments, \citet{bottou2009curiously} notices an approximately $\mathcal{O}(\nf{1}{T^2})$ convergence rate of \rsgd. Without-replacement sampling also improves data-access efficiency in distributed settings, see for instance~\citep{feng2012towards, lee2015distributed}. The permutation-sampling idea has been further embedded into more complicated algorithms; see~\citep{de2016efficient,defazio2014finito,shamir2016without} for variance-reduced methods, and \citep{shalev2013stochastic} for decomposition methods. 

Finally, we note a related body of work on coordinate descent, where a similar problem has been studied: \emph{when does random permutation over coordinates behave well?} \citet{Grbzbalaban2017WhenCC} give two kinds of quadratic problems when cyclic version of coordinate descent beats the with replacement one, which is a stronger result indicating that random permutation also beats the with replacement method. However, such a deterministic version of the algorithm suffers from poor worst case. Indeed, in \citep{sun2016worst} a setting is analyzed where cyclic coordinate descent can be dramatically worse than both with-replacement and random permutation versions of coordinate descent. \citet{lee2016random} further study this setting, and analyze how the random permutation version of coordinate descent avoids the slow convergence of cyclic version. In~\citep{wright2017analyzing}, Wright et el.\ propose a more general class of quadratic functions where random permutation outperforms cyclic coordinate descent. 


\section{Background and problem setup}
For problem~\eqref{1}, we assume the finite sum function $F(x): \mathbb{R}^d\rightarrow \mathbb{R}$ is strongly convex, i.e., \[F(x) \geq F(y) + \left\langle \nabla F(y), x-y \right\rangle + \tfrac{\mu}{2}\norm{x-y}^2,\] where $x, y\in \mathbb{R}^d$, and $\mu>0$ is the strong convexity parameter. Furthermore, we assume each component function is $L$-smooth, so that for $i=1,\ldots,n$, there exists a constant $L$ such that
\begin{equation}
  \label{eq:4}
  \norm{\nabla f_i(x) - \nabla f_i(y)} \leq L \norm{x-y}.
\end{equation}

Furthermore, we assume that the component functions are second order differentiable with a Lipschitz continuous Hessian. We use $H_i(x)$ to denote the Hessian of function $f_i$ at $x$. Specifically, for each $i=1,\ldots,n$, we assume that for all $x, y \in \reals^d$, there exists a constant $L_H$ such that
\begin{equation}
  \label{eq:5}
  \norm{H_i(x) - H_i(y)} \leq L_H \norm{x-y}.
\end{equation}
The norm is the spectral norm for matrices and $\ell_2$ norm for vectors.
We denote the unique minimizer of $F(x)$ as $x^*$, the index set $\{1, \cdots, n\}$ as $[n]$. The complexity bound is represented as $\mathcal{O}(\cdot)$, with all logarithmic terms hidden. All other parameters that might be hidden in the complexity bounds will be clarified in corresponding sections.

\subsection{The algorithms under study: \sgd and \rsgd}
For both \sgd and \rsgd, we use $\gamma$ as the step size, which is predetermined before the algorithms are run. The sequences generated by both methods are denoted as $(x_k)_{k=0}^T$; here $x_0$ is the initial point and $T$ is the total number of iterations (i.e., number of stochastic gradients used).

\sgd is defined as follows: for each iteration $1\leq k\leq T$, it picks an index $s(k)$ independently uniformly from the index set $[n]$, and then performs the update
\begin{equation}
\label{eq:1}\tag{\sgd}
x_{k} = x_{k-1} - \gamma \nabla f_{s(k)}(x_{k-1}).
\end{equation}
In contrast, \rsgd runs as follows: for each epoch $t$, it picks one permutation $\sigma_t(\cdot): [n]\rightarrow [n]$ independently uniformly from the set of all permutations of $[n]$. Then, it sequentially visits each of the component functions of the finite-sum~\eqref{1} and performs the update
\begin{equation}
  \label{eq:2}\tag{\rsgd}
  \hspace{1.9in}x^t_{k} = x^t_{k-1} - \gamma \nabla f_{\sigma_t(k)}\left(x^t_{k-1}\right),
\end{equation}
for $1\le k \le n$. Here $x^t_k = x_{(t-1)n+k}$ represents the $k$-th iterate within the $t$-th epoch. For two consecutive epochs $t$ and $t+1$, one has $x^{t+1}_0 = x^t_{n}$; for the initial point one has $x^1_0 = x_0$. For convenience of analysis, we always assume \rsgd is run for an integer number of epochs, i.e., $T=ln$ for some $l\in \mathbb{Z}^+$. This is a reasonable assumption given our main interest is when several  epochs of \rsgd are run. 


\section{Convergence analysis of \rsgd}
\label{sec:convg}
The goal of this section is to build theoretical analysis for \rsgd. Specifically, we answer the following question: \emph{when can we show \rsgd to be better than \sgd}? We begin by first analyzing  
quadratic functions in Section~\ref{qc}, where the analysis benefits from having a  constant Hessian. Subsequently, in Section~\ref{gc}, we extend our analysis to the general (smooth) strongly convex setting. A key idea in our analysis is to make the convergence rate bounds sensitive to $n$, the number of components in the finite-sum~\eqref{1}. In Section~\ref{dpn}, we discuss and justify the necessity of introducing $n$ into our convergence bound.

\subsection{\rsgd for quadratics}\label{qc}
We first consider the quadratic instance of~\eqref{1}, where 
\begin{equation}
  \label{eq:3}
  f_i(x) = \tfrac{1}{2}x^T A_i x + b_i^Tx,\qquad i=1,\ldots,n,
\end{equation}
where $A_i\in \reals^{d\times d}$ is positive semi-definite, and $b_i \in \reals^d$. We should notice often in analyzing strongly convex problems, the quadratic case presents a good example when tight bounds are achieved. 

Quadratic functions have a constant Hessian function $H_i(x) = A_i$, which eases our analysis. Similar to the usual \sgd, our bound also depends on the following constants: 
(i) strong convexity parameter $\mu$, and component-wise Lipschitz constant $L$; (ii) diameter bound $\norm{x - x^*} \leq D$ (i.e., any iterate $x$ remains bounded; can be enforced by explicit projection if needed); and (iii) bounded gradients $\norm{\nabla f_i(x)} \leq G$ for each $f_i$ ($1\le i \le n$), and any $x$ satisfying (ii). 
We omit these constants for clarity, but discuss the condition number further in Section~\ref{discuss}.

Our main result for \rsgd is the following (omitting logarithmic terms): 
\begin{theorem}\label{thm41}
  With $f_i$ defined by~\eqref{eq:3}, let the condition number of problem \eqref{1} be $\kappa = L/\mu$. So long as $\frac{T}{\log T} > 6(1+\kappa)n$,  with step size $\gamma = \frac{4\log T}{ T\mu}$,  \rsgd achieves convergence rate: \[\Exp [\norm{x_T - x^*}^2] \leq \mathcal{O}\Bigl(\frac{1}{T^2} + \frac{n^3}{T^3}\Bigr).\]
\end{theorem}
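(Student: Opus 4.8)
The plan is to track the error $e^t_k := x^t_k - x^*$ over one full epoch and derive an epoch-to-epoch recursion. Since the $f_i$ are quadratic, $\nabla f_i(x) = A_i x + b_i$ and hence the \rsgd step becomes $e^t_k = (I - \gamma A_{\sigma_t(k)})e^t_{k-1} - \gamma\,\nabla f_{\sigma_t(k)}(x^*)$. Unrolling across the $n$ steps of epoch $t$ and writing $y_t := e^t_0$ gives a linear recursion $y_{t+1} = P_t y_t - r_t$, where $P_t := \prod_{k=n}^{1}(I - \gamma A_{\sigma_t(k)})$ is the ordered product of step matrices and $r_t := \gamma\sum_{k=1}^n\bigl(\prod_{j=n}^{k+1}(I-\gamma A_{\sigma_t(j)})\bigr)\nabla f_{\sigma_t(k)}(x^*)$ is the accumulated gradient-at-optimum noise. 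The single most important observation is that the leading ($O(\gamma)$) part of $r_t$ is $\gamma\sum_{k}\nabla f_{\sigma_t(k)}(x^*) = \gamma\sum_{i}\nabla f_i(x^*) = \gamma n\,\nabla F(x^*) = 0$: because \rsgd visits every component exactly once per epoch, the zeroth-order noise cancels identically. This is precisely the structural gain over \sgd, whose per-step noise does not telescope.

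Next I would expand $P_t$ and $r_t$ in powers of $\gamma$. Using $\sum_k A_{\sigma_t(k)} = \sum_i A_i = nA$ (permutation-independent), where $A := \nabla^2 F$, we get $P_t = I - \gamma n A + E_t$ with $\|E_t\| = \mathcal{O}(\gamma^2 n^2 L^2)$, and, after the cancellation above, $r_t = -\gamma^2\sum_{j>k}A_{\sigma_t(j)}\nabla f_{\sigma_t(k)}(x^*) + (\text{higher order})$, so that $\|r_t\| = \mathcal{O}(\gamma^2 n^2 LG)$ rather than the crude $\mathcal{O}(\gamma nG)$. I would then take expectations over the uniform permutation $\sigma_t$. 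For the bias, a counting argument (each unordered pair of indices is equally likely to appear in either order) together with $\sum_i\nabla f_i(x^*)=0$ yields $\sum_{a\ne b}A_a\nabla f_b(x^*) = -\sum_a A_a\nabla f_a(x^*)$, hence $\mathbb{E}[r_t] = \tfrac{\gamma^2}{2}\sum_i A_i\nabla f_i(x^*) + (\text{h.o.}) =: v_\gamma$, a fixed vector with $\|v_\gamma\| = \mathcal{O}(\gamma^2 nLG)$. The fluctuation $\xi_t := r_t - \mathbb{E}[r_t]$ is mean-zero, independent across epochs, and its second moment is controlled by the worst-case norm, $\mathbb{E}\|\xi_t\|^2 \le \mathbb{E}\|r_t\|^2 = \mathcal{O}(\gamma^4 n^4 L^2 G^2)$.

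Then I would unroll the recursion over the $l = T/n$ epochs, $y_{l+1} = \Phi_{l,1}y_1 - \sum_{t=1}^l \Phi_{l,t+1}r_t$ with $\Phi_{l,t} := P_l\cdots P_t$, and split each $r_t = v_\gamma + \xi_t$. The step size $\gamma = \tfrac{4\log T}{T\mu}$ is chosen so that $\gamma n\mu = \Theta(\tfrac{\log T}{l})$, giving a per-epoch contraction $\|P_t\| \le 1 - c\gamma n\mu$ and hence $(1-c\gamma n\mu)^l = T^{-\Theta(1)}$; this annihilates the initial term $\|\Phi_{l,1}y_1\|^2 \le (1-c\gamma n\mu)^{2l}D^2$. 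The bias term is a geometric accumulation of the fixed vector $v_\gamma$, $\sum_t\Phi_{l,t+1}v_\gamma \approx (\gamma nA)^{-1}v_\gamma = \tfrac{1}{2n}\gamma A^{-1}\sum_iA_i\nabla f_i(x^*)$, whose norm is $\mathcal{O}(\gamma LG/\mu) = \mathcal{O}(\tfrac{\log T}{T}\cdot\tfrac{LG}{\mu^2})$, contributing the $\mathcal{O}(1/T^2)$ term after squaring. For the fluctuation term, independence of $\xi_t$ from the future factors $\Phi_{l,t+1}$ kills all cross terms, so $\mathbb{E}\|\sum_t\Phi_{l,t+1}\xi_t\|^2 = \sum_t\mathbb{E}\|\Phi_{l,t+1}\|^2\,\mathbb{E}\|\xi_t\|^2 \le \tfrac{1}{2c\gamma n\mu}\cdot\mathcal{O}(\gamma^4 n^4 L^2G^2) = \mathcal{O}(\gamma^3 n^3 L^2G^2/\mu)$, which is $\mathcal{O}(n^3/T^3)$ for the chosen $\gamma$. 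Adding the two surviving contributions gives $\mathbb{E}\|x_T-x^*\|^2 = \mathcal{O}(1/T^2 + n^3/T^3)$.

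I expect the main obstacle to be establishing the per-epoch contraction $\|P_t\|\le 1-c\gamma n\mu$ rigorously: $P_t$ is a product of non-commuting symmetric contractions, so a bound on the product norm does not follow from the factorwise bounds $\|I-\gamma A_{\sigma_t(k)}\|\le 1$ alone. I would handle this through an energy estimate, using $\|(I-\gamma A)u\|^2 \le \|u\|^2 - \gamma\,u^\top A u$ when $\gamma L\le 1$ and summing over the epoch while controlling the within-epoch iterate drift $\|u_{k-1}-u_0\| = \mathcal{O}(\gamma n L\|u_0\|)$; it is precisely this step that forces $\gamma nL$ (equivalently $n$ relative to $T/\log T$ and $\kappa$) to be small, and tracking those constants is what produces the explicit threshold $\frac{T}{\log T} > 6(1+\kappa)n$. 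A secondary bookkeeping burden is bounding, uniformly over permutations, the geometric tails of the $\gamma$-expansions of $P_t$ and $r_t$ to confirm that all the terms labelled ``higher order'' above are genuinely dominated under the same threshold.
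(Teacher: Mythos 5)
Your core mechanism is sound and, at heart, rests on the same two cancellations that drive the paper's proof: the zeroth-order one ($\sum_i\nabla f_i(x^*)=0$ makes the $O(\gamma)$ part of the per-epoch noise vanish identically), and the pairwise one over the random permutation --- your counting identity $\sum_{a\neq b}A_a\nabla f_b(x^*)=-\sum_a A_a\nabla f_a(x^*)$ is exactly the paper's lemma $\norm{\Delta}\leq LG/(n-1)$ for $\Delta=\Exp_{i\neq j}\left[H_i\nabla f_j(x^*)\right]$. Where you differ is packaging: the paper never forms the matrix products $P_t$; it expands $\norm{x^t_n-x^*}^2$ once per epoch, splits the epoch gradient sum into $n\nabla F(x^t_0)+R^t$, and bounds $-2\gamma\langle x^t_0-x^*,\Exp[R^t]\rangle$ through the decomposition $R^t=A^t+B^t$, absorbing errors via strong convexity. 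Your exact affine recursion $y_{t+1}=P_t y_t-r_t$ with a bias--fluctuation split $r_t=v_\gamma+\xi_t$ is a legitimate and arguably more transparent alternative for quadratics (the paper's formulation is what generalizes to the non-quadratic Theorem~3), and your term-by-term orders ($\gamma^2 nLG$ bias, $\gamma^4n^4L^2G^2$ fluctuation variance, yielding $O(1/T^2)$ and $O(n^3/T^3)$ respectively) match the paper's recursion constants.

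Two concrete problems, however. First, your claim that independence ``kills all cross terms'' in $\Exp\bigl[\norm{\sum_t\Phi_{l,t+1}\xi_t}^2\bigr]$ is false as stated: for $s<t$ the factor $\Phi_{l,s+1}=P_l\cdots P_{s+1}$ contains $P_t$, which is correlated with $\xi_t$ (both are functions of $\sigma_t$), so $\Exp\left[\langle\Phi_{l,s+1}\xi_s,\Phi_{l,t+1}\xi_t\rangle\right]\neq 0$ in general (and the displayed factorization into $\Exp\norm{\Phi_{l,t+1}}^2\Exp\norm{\xi_t}^2$ is not an identity in any case). The repair is available because the $O(\gamma)$ part of $P_t$ is $-\gamma nA$, which is permutation-independent, so $\norm{P_t-\Exp[P_t]}=O(\gamma^2n^2L^2)$; combined with $\norm{\xi_t}=O(\gamma^2n^2LG)$ and the geometric decay of the surrounding factors, the cross terms total $O(\gamma^4n^4L^4G^2/\mu^2)=O(n^4/T^4)$ up to logarithms, which is within budget --- but this estimate must actually be carried out. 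Second, the threshold: your contraction plan, whether via $P_t=I-\gamma nA+E_t$ with $\norm{E_t}=O(\gamma^2n^2L^2)$ or via your energy estimate with within-epoch drift $O(\gamma nL\norm{u_0})$, delivers $\norm{P_t}\leq 1-\gamma n\mu+O(\gamma^2n^2L^2)$, which is a contraction only when $\gamma nL^2\lesssim\mu$, i.e.\ $\frac{T}{\log T}\gtrsim n\kappa^2$ --- not the stated $6(1+\kappa)n$. The paper obtains the linear-in-$\kappa$ threshold precisely by never contracting $P_t$ in operator norm: Nesterov's inequality retains a negative $-\frac{2n\gamma}{L+\mu}\norm{\nabla F(x^t_0)}^2$ term, and all $O(\gamma^2n^2)$ second-order errors are organized as multiples of $\norm{\nabla F(x^t_0)}^2$ rather than of $L^2\norm{y_t}^2$, so that this term absorbs them under the milder condition $3\gamma n(L+\mu)<2$. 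As planned, your argument proves the claimed rate $O(1/T^2+n^3/T^3)$ but under a strictly stronger epoch requirement; matching the theorem's constant requires importing that finer bookkeeping rather than tracking constants through a uniform product contraction.
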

We provide a proof sketch in Section~\ref{prf}, deferring the fairly involved technical details to Appendix~\ref{th1}. In terms of sample complexity, Theorem~\ref{thm41} yields the following corollary:
\begin{corollary}
  Let $f_i$ be defined by~\eqref{eq:3}. The sample complexity for \rsgd to achieve $\Exp[\norm{x_T - x^*}^2]=\Oc(\epsilon)$ is no more than $\mathcal{O}\bigl( \epsilon^{-\nf12} + n \epsilon^{-\nicefrac{1}{3}} \bigr)$.
\end{corollary}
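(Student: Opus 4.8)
The plan is to directly invert the convergence rate from Theorem~\ref{thm41}. Since the guaranteed rate $\Exp[\norm{x_T - x^*}^2] \leq \mathcal{O}\bigl(\frac{1}{T^2} + \frac{n^3}{T^3}\bigr)$ is a sum of two terms, it suffices to drive each term below a constant multiple of $\epsilon$ separately; the requirement on $T$ that guarantees $\Exp[\norm{x_T - x^*}^2] = \mathcal{O}(\epsilon)$ is then simply the larger of the two resulting thresholds. Because $T$ is by definition the total number of stochastic-gradient calls, the smallest admissible $T$ \emph{is} the sample complexity, so no separate translation step is needed.

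Concretely, I would first handle the $\frac{1}{T^2}$ term: demanding $\frac{1}{T^2} \le c\,\epsilon$ forces $T \ge c'\epsilon^{-\nf12}$. Next, the $\frac{n^3}{T^3}$ term: demanding $\frac{n^3}{T^3} \le c\,\epsilon$ forces $T^3 \ge c'' n^3\epsilon^{-1}$, i.e.\ $T \ge c''' n\,\epsilon^{-\nf13}$. Taking $T = \max\bigl(\epsilon^{-\nf12},\, n\,\epsilon^{-\nf13}\bigr)$ (up to constants) makes both terms $\mathcal{O}(\epsilon)$ simultaneously, and since $\max(a,b) \le a+b$ this yields the claimed bound $T = \mathcal{O}\bigl(\epsilon^{-\nf12} + n\,\epsilon^{-\nf13}\bigr)$.

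The one point that genuinely needs care is verifying that this choice of $T$ satisfies the precondition of Theorem~\ref{thm41}, namely $\frac{T}{\log T} > 6(1+\kappa)n$. Since $T \ge c''' n\,\epsilon^{-\nf13}$, we have $\frac{T}{\log T} \ge \frac{c''' n\,\epsilon^{-\nf13}}{\log(c''' n\,\epsilon^{-\nf13})}$, and for $\epsilon$ small enough — which is precisely the regime a sample-complexity statement targets — the factor $\epsilon^{-\nf13}/\log(\cdot)$ overwhelms the constant $6(1+\kappa)$, so the hypothesis holds automatically. A second, purely bookkeeping subtlety is the suppressed logarithmic factors hidden in the $\mathcal{O}$ of Theorem~\ref{thm41} (entering through the step size $\gamma = \frac{4\log T}{T\mu}$); since the corollary is itself stated with logarithmic terms hidden, these can be absorbed into the $\mathcal{O}$ when solving for $T$, and I expect only to confirm that the polylog corrections do not alter the stated exponents $-\nf12$ and $-\nf13$. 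I anticipate no serious obstacle here: the result is essentially an inversion of the rate in Theorem~\ref{thm41}, with checking the Theorem~\ref{thm41} precondition being the most delicate part of the argument.
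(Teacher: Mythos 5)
Your proposal is correct and matches the paper's (implicit) argument exactly: the corollary is stated without separate proof as a direct inversion of the rate in Theorem~\ref{thm41}, requiring $T \gtrsim \epsilon^{-\nicefrac{1}{2}}$ and $T \gtrsim n\epsilon^{-\nicefrac{1}{3}}$ for the two terms respectively, with logarithmic factors hidden as the paper's conventions allow. Your check of the precondition $\frac{T}{\log T} > 6(1+\kappa)n$ is the right extra care, and it is consistent with the paper's own $\kappa$-dependent sample complexity in Section~\ref{discuss}, where that precondition surfaces as the additive $\kappa n$ term that is absorbed into $n\epsilon^{-\nicefrac{1}{3}}$ when constants are suppressed.
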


We observe that in the regime when $T$ gets large, our result matches~\citep{gurbuzbalaban2015random}. But it provides more information when the number of epochs is not so large that the $\frac{n^3}{T^3}$ can be neglected. This setting is clearly the most compelling to study. Formally, we recover the main result of~\citep{gurbuzbalaban2015random} as the following:
\begin{corollary}
  As $T\rightarrow \infty$, \rsgd achieves asymptotic convergence rate $\mathcal{O}\left(\frac{1}{T^2}\right)$ when run with the proper step size schedule.
\end{corollary}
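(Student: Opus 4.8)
The plan is to invoke Theorem~\ref{thm41} directly and track the asymptotic behavior of its two error terms as $T$ grows while the problem instance (and hence $n$, $\kappa$, $\mu$) is held fixed. First I would verify that the hypothesis of Theorem~\ref{thm41}, namely $\frac{T}{\log T} > 6(1+\kappa)n$, holds for all sufficiently large $T$: since $n$ and $\kappa$ are constants of the fixed instance and $\frac{T}{\log T}\to\infty$, there is a threshold $T_0$ beyond which the premise is satisfied. For every $T\ge T_0$ I may therefore apply the theorem with the prescribed step size $\gamma = \frac{4\log T}{T\mu}$, which is exactly the ``proper step size schedule'' referenced in the statement.

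Next I would simplify the bound $\mathcal{O}\left(\frac{1}{T^2} + \frac{n^3}{T^3}\right)$ in this regime. Writing $\frac{n^3}{T^3} = \frac{n^3}{T}\cdot\frac{1}{T^2}$ and using that $n$ is fixed, the factor $\frac{n^3}{T}\to 0$ as $T\to\infty$, so the cubic term is $o\left(\frac{1}{T^2}\right)$ and is absorbed into the quadratic term. Hence $\Exp[\norm{x_T - x^*}^2]\leq \mathcal{O}\left(\frac{1}{T^2}\right)$, the claimed asymptotic rate.

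There is essentially no obstacle here beyond bookkeeping: the corollary is a direct specialization of Theorem~\ref{thm41} to the large-$T$ limit with $n$ fixed, and its only content is to match the asymptotic statement of \citet{gurbuzbalaban2015random}. The one point I would be careful about, consistent with the paper's convention, is that the $\mathcal{O}(\cdot)$ here suppresses logarithmic factors arising from the choice $\gamma\propto\frac{\log T}{T}$, so the recovered rate is understood up to polylogarithmic terms, exactly as in the asymptotic result being recovered.
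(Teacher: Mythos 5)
Your proposal is correct and matches the paper's (implicit) argument exactly: the corollary is stated as an immediate consequence of Theorem~\ref{thm41}, obtained by fixing the instance so that the hypothesis $\frac{T}{\log T} > 6(1+\kappa)n$ eventually holds and observing that $\frac{n^3}{T^3} = o\bigl(\frac{1}{T^2}\bigr)$ for fixed $n$. Your remark that the $\mathcal{O}(\cdot)$ suppresses polylogarithmic factors from the choice $\gamma \propto \frac{\log T}{T}$ is also consistent with the paper's stated convention of hiding logarithmic terms.
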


\subsection{\rsgd for strongly convex problems}\label{gc}
Next, we consider the more general case where each component function $f_i$ is convex and the sum $F(x)=\tfrac{1}{n}\sum_if_i(x)$ is strongly convex. Surprisingly\footnote{Intuitively, the change of Hessian over the domain can raise challenges. However, our convergence rate here is quite similar to quadratic case, with only mild dependence on Hessian Lipschitz constant. }, one can easily adapt the methodology of the proof for Theorem~\ref{thm41} in this setting. To this end, our analysis requires one further assumption that each component function is second order differentiable and its Hessian satisfies the Lipschitz condition~\eqref{eq:5} with constant $L_H$. 

Under these assumptions, we obtain the following result:
\begin{theorem}\label{thm43}
  Define constant $C = \max\left\{\frac{32}{\mu^2}(L_H LD + 3L_H G), 12(1+\frac{L}{\mu}) \right\}$. So long as $\frac{T}{\log T} > Cn$,  with step size $\eta = \frac{8\log T}{ T\mu}$, \rsgd achieves convergence rate: \[\Exp [\norm{x_T - x^*}^2] \leq \mathcal{O}\Bigl(\frac{1}{T^2} + \frac{n^3}{T^3}\Bigr).\]
\end{theorem}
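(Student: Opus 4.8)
The plan is to reduce the general strongly convex case to the quadratic analysis of Theorem~\ref{thm41} by linearizing each component gradient about the optimum $x^*$, and then to absorb the resulting nonlinear remainder using the Lipschitz-Hessian assumption~\eqref{eq:5}. Writing $y_k := x_k - x^*$, setting $g_i := \nabla f_i(x^*)$ (so that $\sum_i g_i = n\nabla F(x^*) = 0$) and $A_i := H_i(x^*)$, a first-order Taylor expansion with the integral form of the remainder together with~\eqref{eq:5} gives $\nabla f_i(x^* + y) = g_i + A_i y + e_i(y)$ with $\norm{e_i(y)} \le \tfrac{L_H}{2}\norm{y}^2$. Substituting into the \rsgd update~\eqref{eq:2}, each in-epoch step becomes
\[
y^t_k = (I - \eta A_{\sigma_t(k)})\,y^t_{k-1} - \eta\,g_{\sigma_t(k)} - \eta\,e_{\sigma_t(k)}\!\left(y^t_{k-1}\right),
\]
which is exactly the quadratic recursion of Theorem~\ref{thm41} (with the fixed Hessians $A_i$) perturbed by the additive term $-\eta\,e_{\sigma_t(k)}(y^t_{k-1})$.

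The first main step is to run the Theorem~\ref{thm41} machinery on the linearized part. The crucial without-replacement cancellation survives verbatim: summing the linear noise over a full epoch gives $\sum_k g_{\sigma_t(k)} = \sum_i g_i = 0$, so that to leading order in $\eta$ one epoch of \rsgd reproduces a full-gradient step with the zero-mean noise cancelled, leaving only the $\mathcal{O}(\eta^2)$ cross-terms whose permutation-averaged contribution yields the $\tfrac{1}{T^2} + \tfrac{n^3}{T^3}$ rate. With the step size $\eta = \tfrac{8\log T}{T\mu}$, strong convexity of $F$ (so that $\bar A = \tfrac{1}{n}\sum_i A_i \succeq \mu I$) contracts the initialization error by a factor $\approx (1-\eta\mu)^T = T^{-8}$, which is negligible against $1/T^2$.

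The second main step is to show the nonlinear remainder does not degrade this rate, and this is where I expect the real work. Unlike the $g_i$, the remainders $e_i(y)$ need not cancel over an epoch, but they are genuinely second order: combining $\norm{e_i(y^t_{k-1})} \le \tfrac{L_H}{2}\norm{y^t_{k-1}}^2$ with the in-epoch displacement bound $\norm{y^t_{k-1} - y^t_0} \le (k-1)\eta G$ (each step moving at most $\eta G$) and the diameter bound $\norm{y}\le D$, one controls the per-epoch error in terms of $L_H$, $L$, $D$, and $G$. The plan is then an induction on the epoch index $t$ for a bound of the form $\Exp[\norm{y^t_0}^2] \le \mathcal{O}(\tfrac{1}{T^2} + \tfrac{n^3}{T^3})$: assuming it at epoch $t$, the remainder terms are bounded by the inductive hypothesis, fed back into the recursion, and shown to be dominated provided $\tfrac{T}{\log T} > Cn$ with $C = \max\{\tfrac{32}{\mu^2}(L_H L D + 3L_H G),\,12(1+\tfrac{L}{\mu})\}$. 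The delicate point is that the linearization uses the fixed $A_i = H_i(x^*)$ while the gradients are evaluated at the moving iterates, so one must also absorb the drift of $H_i$ across the domain; the threshold $C$ is engineered precisely so that the $L_H$-dependent error stays a lower-order correction to the quadratic bound, leaving the $T$-dependence intact.
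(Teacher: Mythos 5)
Your overall reduction is exactly the paper's strategy: the paper also freezes the Hessians at the optimum, writing $H_i := H_i(x^*)$, and decomposes the per-epoch gradient error as $R^t = A^t + B^t + C^t$ (see~\eqref{b1}), where $A^t$ is the linearized part built from the fixed $H_i$, $B^t$ is the in-epoch gradient drift bounded via the per-step displacement $\gamma G$ (just as in your $\norm{y^t_{k-1}-y^t_0}\le (k-1)\eta G$ bound), and $C^t$ collects the Hessian variation controlled by $L_H$ --- your Taylor remainder $e_i(y)$ with $\norm{e_i(y)}\le \tfrac{L_H}{2}\norm{y}^2$ is the same object in slightly different bookkeeping (the paper additionally needs the estimate $\norm{H_i(x^t_0-x^*) - (\nabla f_i(x^t_0)-\nabla f_i(x^*))}\le L_H\norm{x^t_0-x^*}^2$ inside the $A^t$ bound, which is again your remainder). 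The without-replacement cancellation also enters exactly as you indicate, through $\sum_i \nabla f_i(x^*)=0$, in the quantitative form $\norm{\Delta}\le \frac{LG}{n-1}$ for $\Delta = \Exp_{i\neq j} H_i \nabla f_j(x^*)$.

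The genuine gap is your proposed closing step: an induction on the epoch index with hypothesis $\Exp[\norm{y^t_0}^2] \le \mathcal{O}\bigl(\frac{1}{T^2}+\frac{n^3}{T^3}\bigr)$ cannot work, because that bound is false at the base case and throughout the transient phase. With $\eta = \frac{8\log T}{T\mu}$, one epoch contracts the error only by a factor $1-\Theta\bigl(\frac{n\log T}{T}\bigr)$, so after any bounded number of epochs $\Exp[\norm{y^t_0}^2]$ is still $\Theta(D^2)$ in general; the stated rate emerges only after all $T/n$ epochs have acted, so there is nothing valid to induct on (your aside that the initialization contracts like $(1-\eta\mu)^T$ already presumes the full horizon). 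The fix --- and what the paper actually does --- requires no induction: bound the remainder \emph{uniformly in $t$} using the global bounds $D$ and $G$, in particular writing $\norm{y^t_0}^3 \le D\norm{y^t_0}^2$, so that every $L_H$-dependent term proportional to $\norm{y^t_0}^2$ is absorbed into the contraction margin rather than estimated by an inductive hypothesis. This absorption is legitimate precisely when $\gamma^2 n^2(L_H LD + 3L_H G) < \tfrac{1}{2}n\gamma\frac{L\mu}{L+\mu}$, i.e.\ when $\frac{T}{\log T} > \frac{32}{\mu^2}(L_H LD + 3L_H G)\,n$, which is the first branch of the constant $C$ in the theorem (the second branch $12(1+\frac{L}{\mu})$ comes from dominating the $\gamma^2 n^2\norm{\nabla F(x^t_0)}^2$ term, exactly as in the quadratic proof). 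One then obtains, for \emph{every} epoch, the recursion $\Exp[\norm{y^{t+1}_0}^2] \le \bigl(1-\tfrac{1}{2}n\gamma\frac{L\mu}{L+\mu}\bigr)\norm{y^t_0}^2 + \gamma^3 n C_1 + \gamma^4 n^4 C_2 + \gamma^5 n^5 C_3$ (the paper's~\eqref{b11}), and unrolling it over $T/n$ epochs yields the initialization term $\norm{x_0-x^*}^2/T^2$ (note: order $1/T^2$, not your $T^{-8}$, since half the contraction margin is spent on the absorption) plus the claimed $\mathcal{O}\bigl(\frac{1}{T^2}+\frac{n^3}{T^3}\bigr)$ error. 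Your ingredients are all correct; replacing the induction by this uniform contraction-plus-unrolling makes the proof go through.
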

Except for extra dependence on $L_H$ and a mildly different step size, this rate is essentially the same as that in quadratic case. The proof for the result can be found in Appendix~\ref{generalproof}. Due to the similar formulation, most of the consequences noted in Section~\ref{qc} also hold in this general setting. 

\subsection{Understanding the dependence on $n$}\label{dpn} 
Since the motivation of building our convergence rate analysis is to show that \rsgd behaves better than \sgd, we would definitely hope that our convergence bounds have a better dependence on $T$ compared to the $\mathcal{O}(\frac{1}{T})$ bound for \sgd. In an ideal situation, one may hope for a rate of the form $\mathcal{O}(\frac{1}{T^{1+\delta}})$ with some $\delta>0$. One intuitive criticism toward this goal is evident: if we allow $T<n$, then by setting $n>T^2$, \rsgd is essentially same as \sgd by the birthday paradox. Therefore, a $\mathcal{O}\left(\frac{1}{T^{1+\delta}}\right)$ bound is unlikely to hold.

However, this argument is not rigorous when we require a positive number of epochs to be run (at least one round through all the data). To this end, we provide the following result indicating the impossibility of obtaining $\Oc(\frac{1}{T^{1+\delta}})$ even when $T\geq n$ is required.

\begin{theorem}\label{thm42}
Given the information of $\mu, L, G$. Under the assumption of constant step sizes, no step size choice for \rsgd leads to a convergence rate $o(\nf{1}{T})$ for any $T\geq n$, if we do not allow $n$ to appear in the bound.
\end{theorem}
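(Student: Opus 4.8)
The plan is to prove the lower bound by exhibiting, for each $T$, an explicit hard instance with $n=T$ (i.e.\ exactly one epoch) on which \rsgd cannot beat the $\Oc(\nf1T)$ rate, no matter how the constant step size is chosen. This is the rigorous form of the ``birthday paradox'' heuristic: when $n=T$ a single pass touches each component exactly once, so the correlations that usually help \rsgd have no time to accumulate, and the method behaves essentially like \sgd. Concretely, in dimension $d=1$ I would take $n$ even and set $f_i(x)=\tfrac12(x-c_i)^2$ with half of the $c_i$ equal to $+1$ and half equal to $-1$. Then $F(x)=\tfrac12 x^2+\tfrac12$, so $x^*=0$ and $\mu=L=1$; fixing the domain $\norm{x-x^*}\le D$ and the start $x_0=1$ gives a gradient bound $G=D+1$. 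Crucially $\mu,L,G$ and $\norm{x_0-x^*}$ are all constants independent of $n$, so this is a legitimate family for the claim.

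Next I would solve the one-epoch recursion exactly. Writing $\rho=1-\gamma$, the \rsgd update is linear, $x_k=\rho\,x_{k-1}+\gamma c_{\sigma(k)}$, so after one epoch ($n=T$ steps)
\[
  x_T=\rho^{T}x_0+\gamma\sum\nolimits_{k=1}^{T}\rho^{\,T-k}c_{\sigma(k)}.
\]
Since $\tfrac1T\sum_i c_i=0$, taking expectation over the uniform permutation $\sigma$ kills the linear term, so $\Exp[x_T]=\rho^{T}x_0$ and the error splits as $\Exp[x_T^2]=\rho^{2T}x_0^2+\gamma^2\Var_\sigma\bigl(\sum_k\rho^{T-k}c_{\sigma(k)}\bigr)$. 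The variance of a weighted sum under a uniform permutation has the closed form $\tfrac{1}{T-1}\bigl(\sum_i c_i^2\bigr)\bigl(\sum_k w_k^2-\tfrac1T(\sum_k w_k)^2\bigr)$ with weights $w_k=\rho^{T-k}$ (using $\bar c=0$); evaluating the geometric sums and using $\sum_i c_i^2=T$ yields the exact identity
\[
  \Exp[x_T^2]=\rho^{2T}x_0^2+\tfrac{T}{T-1}\Bigl[\tfrac{\gamma(1-\rho^{2T})}{2-\gamma}-\tfrac{(1-\rho^{T})^2}{T}\Bigr].
\]

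The final step is the bias--variance trade-off, carried out uniformly over $\gamma$. I would split into two cases. If $\rho^{2T}x_0^2\ge\nf1T$, the bias term alone already gives $\Exp[x_T^2]\ge\nf1T$. Otherwise $\rho^{T}<T^{-1/2}$, which forces $\gamma$ to be large: from $-\log\rho>\tfrac{\log T}{2T}$ one gets $\gamma\gtrsim\tfrac{\log T}{T}$. Feeding this back into the (nonnegative) variance bracket, and using $1-\rho^{2T}\ge1-\nf1T$, $(1-\rho^{T})^2\le1$, and $\tfrac{\gamma}{2-\gamma}\ge\tfrac{\gamma}{2}$, the variance floor is at least $\tfrac{\gamma}{2}(1-\nf1T)-\nf1T=\Omega\bigl(\tfrac{\log T}{T}\bigr)$ for large $T$. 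Hence in either case $\Exp[x_T^2]=\Omega(\nf1T)$ for \emph{every} constant step size $\gamma$ (the degenerate ranges $\gamma\ge1$ only enlarge the bias or the floor and are handled identically). Since for each $T$ this instance has $n=T\le T$, any bound $g(T)$ that is uniform over the family and free of $n$ must satisfy $g(T)\ge c/T$, so it cannot be $o(\nf1T)$.

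I expect the main obstacle to be the trade-off step rather than the algebra: one must show that the two knobs cannot be tuned together --- driving the geometric bias $\rho^{2T}$ below $\nf1T$ necessarily inflates $\gamma$, and with $n=T$ the variance floor scales like $\gamma$ (as in \sgd) rather than like $\gamma^3 n^2$ (the favorable many-epoch cancellation behind Theorem~\ref{thm41}). Making this quantitative, while simultaneously checking the boundary step-size regimes and confirming that all problem constants remain $n$-free, is the delicate part.
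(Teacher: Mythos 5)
Your proposal is correct, and it takes a genuinely different analytical route from the paper, even though your hard instance is exactly the scalar ($d=1$, $A=1$, $b=1$) specialization of the paper's construction~\eqref{qins}. The paper diagonalizes a $d$-dimensional quadratic, derives the same permutation-variance identity you use (its~\eqref{c8} is your closed form, per eigendirection), and then argues by \emph{contradiction}: any step-size sequence achieving $o(\nf{1}{T})$ would have to satisfy $\gamma T \rightarrow \nf{2}{\lambda_i}$ simultaneously for distinct eigenvalues $\lambda_2 \neq \lambda_3$, which is impossible --- hence the paper needs $A$ to have at least three distinct eigenvalues. You instead prove a direct, quantitative $\Omega(\nf{1}{T})$ lower bound in one dimension via a bias--variance dichotomy: either $\rho^{2T}x_0^2 \geq \nf{1}{T}$ and the bias alone blocks $o(\nf{1}{T})$ (using $\Exp[x_T^2] \geq (\Exp[x_T])^2$), or else $\gamma \gtrsim \nf{\log T}{2T}$ is forced and the nonnegative variance bracket is bounded below by roughly $\nf{\gamma}{2} - \nf{1}{T} = \Omega(\nf{\log T}{T})$. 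This is a real simplification: it shows the multi-eigenvalue apparatus is unnecessary, because the one step-size regime that cancels the $\Theta(\nf{1}{T})$ leading variance term (namely $\gamma T \approx 2/\lambda$, visible in the paper's asymptotics) leaves a constant-order bias $\rho^{2T} \approx e^{-4}$ --- a tension your case split exploits explicitly but the paper's contradiction argument never isolates. What the paper's route buys in exchange is structural insight into \emph{why} tuning fails per eigendirection in genuinely multi-dimensional problems; your route yields the stronger deliverable for the theorem as stated, an explicit uniform lower bound over all constant step sizes rather than a non-constructive contradiction. The only loose ends in your write-up are cosmetic and fall under your ``handled identically'' remark: for $\gamma \in (1,2)$ with $T$ odd one has $\rho^T < 0$, so in case 2 you should use $(1-\rho^T)^2 \leq (1+T^{-1/2})^2$ (valid since $|\rho|^T < T^{-1/2}$ there) instead of $(1-\rho^T)^2 \leq 1$; and for $\gamma \geq 2$ the bias $\rho^{2T}x_0^2 \geq x_0^2$ automatically lands in case 1.
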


The key idea to prove Theorem~\ref{thm42} is by constructing a special instance of problem~\eqref{1}. In particular, the following quadratic instance of~\eqref{1} lays the foundation of our proof:
\begin{equation}
  \label{qins}
  f_i(x) = \begin{cases}
    \frac{1}{2} (x-b)' A (x-b) & i\ \text{odd},\\
    \addlinespace[0.4em]
    \frac{1}{2} (x+b)' A (x+b) & i\ \text{even}.\\
  \end{cases}
\end{equation}
Here $(\cdot)'$ denotes the transpose of a vector, $A \in \reals^{d\times d}$ is some positive definite matrix, and $b\in \reals^d$ is some vector. Running \rsgd on~\eqref{qins} leads to a close-formed expression of \rsgd's error. Then by setting $T=n$ (i.e., only running \rsgd for one epoch) and assuming a convergence rate of $o\left(\frac{1}{T}\right)$, we deduce a contradiction by properly setting $A$ and $b$. The detailed proof can be found in Appendix~\ref{proofthm42}. We directly have the following corollary:

\begin{corollary}
Given the information of $\mu, L, G$, under the assumption $T\geq n$ and constant step size, there is no step size choice that leads to a convergence rate $\mathcal{O}\left(\frac{1}{T^{1+\delta}}\right)$ for $\delta>0$.
\end{corollary}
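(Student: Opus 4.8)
The stated corollary is an immediate consequence of Theorem~\ref{thm42}: any rate of the form $\mathcal{O}(1/T^{1+\delta})$ with $\delta>0$ satisfies $T\cdot\mathcal{O}(1/T^{1+\delta})=\mathcal{O}(T^{-\delta})\to 0$, so it is in particular an $o(1/T)$ rate. Since the hypotheses (constant step size, $T\ge n$, no appearance of $n$ in the bound, and a guarantee depending only on $\mu,L,G$) are exactly those of Theorem~\ref{thm42}, and that theorem already rules out any $o(1/T)$ rate, the corollary follows with no further work. Thus the entire burden rests on Theorem~\ref{thm42}, and the plan below is for establishing that statement.

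First I would instantiate problem~\eqref{1} with the alternating quadratic family~\eqref{qins}, taking $d=1$ and $A=\mu$ (consistent with any prescribed $L\ge\mu$), and choosing $\|b\|$ and $\|x_0-x^*\|$ as large as the budgets $G$ and $D$ allow. For even $n$ the $\pm b$ offsets cancel in the average, so $F$ has minimizer $x^*=0$ with strong-convexity and smoothness parameters matching the prescribed $\mu,L$. Writing $M=I-\gamma A$ and letting $\epsilon_k\in\{+1,-1\}$ be the sign of the $k$-th function visited in a single epoch, unrolling the \rsgd recursion gives the closed form
\[
  x_n = M^n x_0 + \gamma\nlsum_{k=1}^{n}\epsilon_k M^{n-k} A b ,
\]
so that running one epoch ($T=n$) reduces the analysis to understanding this random sum.

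Next I would take the expectation over the uniformly random permutation. Because a balanced arrangement of $n/2$ plus-signs and $n/2$ minus-signs has $\Exp[\epsilon_k]=0$ and $\Exp[\epsilon_k\epsilon_l]=-1/(n-1)$ for $k\ne l$, the cross term vanishes and one obtains the bias--variance decomposition $\Exp\|x_n-x^*\|^2=\|M^n x_0\|^2+\gamma^2\,\Exp\|\sum_k\epsilon_k M^{n-k}Ab\|^2$, with the variance evaluating in closed form to $(\gamma\mu b)^2\bigl[\tfrac{n}{n-1}\sum_k m^{2(n-k)}-\tfrac{1}{n-1}(\sum_k m^{n-k})^2\bigr]$ in the scalar case ($m=1-\gamma\mu$). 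The final step is to show that this total is $\Omega(1/n)$ for \emph{every} constant step size: if $\gamma$ is so small that $m^n$ stays bounded away from $0$, the bias term is $\Omega(1)$; once $\gamma$ is large enough to kill the bias, the geometric sums force the variance to be at least $\Omega(\log n/n)$; and a short case analysis covering the degenerate regimes $\gamma\mu\ge 2$ and $m<0$ handles the rest. Hence the minimum over $\gamma$ is $\Omega(1/n)=\Omega(1/T)$.

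The main obstacle is precisely this last step: proving the $\Omega(1/n)$ lower bound \emph{uniformly} over all admissible constant step sizes, which amounts to an unavoidable bias--variance tradeoff on~\eqref{qins}. Two points need care. First, the step size may be tuned using $\mu,L,G$ and even $T$, so the bound must hold against the best such $\gamma$; one therefore optimizes the explicit scalar expression over $\gamma$ and verifies that its minimizer still yields $\Omega(1/n)$. Second, because $T=n$ ties the horizon to the instance size, the argument is really a statement about a \emph{family} of instances indexed by $n$ but sharing the fixed constants $\mu,L,G$: showing the error is $\Omega(1/n)$ for each such $n$ forces any $n$-free guarantee $g(T)$ to satisfy $g(T)=\Omega(1/T)$, contradicting $g(T)=o(1/T)$ and completing both Theorem~\ref{thm42} and the corollary.
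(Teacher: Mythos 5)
Your handling of the corollary itself is exactly the paper's: the corollary is stated as an immediate consequence of Theorem~\ref{thm42} (since $T\cdot\Oc(1/T^{1+\delta})\to 0$, any such rate is $o(1/T)$), and your one-line reduction is precisely what the paper intends by ``we directly have the following corollary.'' For Theorem~\ref{thm42} you also share the paper's setup: the same hard instance~\eqref{qins}, the same unrolled closed form, and the same balanced-permutation covariance $\Exp[\epsilon_k\epsilon_l]=-1/(n-1)$ yielding the bias--variance decomposition (the paper's~\eqref{c4}--\eqref{c8}). Where you genuinely diverge is the endgame. The paper works in dimension $d\geq 3$, requiring $A$ to have three distinct eigenvalues $\lambda_1>\lambda_2>\lambda_3$; assuming an $o(1/T)$ rate, it extracts from the variance condition the asymptotic necessary condition $\gamma T\to 2/\lambda_i$ for $i=2,3$ simultaneously, which is impossible when $\lambda_2\neq\lambda_3$ (the largest eigenvalue is reserved for bounding $\gamma$ and the denominators $2-\gamma\lambda_i$). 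You instead stay scalar ($d=1$, $A=\mu$) and prove a \emph{quantitative} lower bound uniform over all constant step sizes: writing $s=\gamma\mu$, $m=1-s$, $q=m^n$, the error is $a^2q^2+\frac{b^2}{n-1}\bigl[\frac{ns(1-q^2)}{2-s}-(1-q)^2\bigr]$, and forcing the bias to be $O(1/n)$ requires $s\gtrsim \log n/(2n)$, whence the variance is $\Omega(\log n/n)$; optimizing over $s$ gives a $\Theta(\log n/n)$ minimax error at $T=n$, so any $n$-free guarantee is $\Omega(1/T)$. Your route buys strictly more information (an explicit $\Omega(\log n/n)$ lower bound, versus the paper's purely qualitative contradiction) and a simpler one-dimensional instance; the price is the case analysis you correctly flag as the main burden: small $s$ (bias $\Omega(1)$), the transition regime, $s\in(1,2)$ where $m<0$ and $q=m^n$ can approach $1$ (bias revives there), $s\geq 2$ (no contraction), and the negative correction term $-(1-q)^2/(n-1)$, which is $O(1/n)$ and hence dominated once $\log n$ is large. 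All of these close with elementary estimates on the explicit scalar expression, so the plan is sound; just note the conclusion is asymptotic in $n$, which is all the theorem requires.
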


This result indicates that in order to achieve a better dependence on $T$ using  constant step sizes, the bound should either: (i) depend on $n$; (ii) make some stronger assumptions on $T$ being large enough (at least exclude $T=n$); or (iii) leverage a more versatile step size schedule, which could potentially be hard to design and analyze. 

Although Theorem~\ref{thm42} shows that one may not hope (under constant step sizes) for a better dependence on $T$ for \rsgd without an extra $n$ dependence, whether the current dependence on $n$ we have obtained is optimal still requires further discussion. In the special case $n=T$, numerical evidence has shown that \rsgd behaves at least as well as \sgd. However, our bound fails to even show \rsgd converges in this setting. Therefore, it is reasonable to conjecture that a better dependence on $n$ exists. In the following section, we improve the dependence on $n$ under a specific setting. But whether a better dependence on $n$ can be achieved in general remains open.\footnote{Convergence rate with dependence on $n$ also appears in some variance reduction methods (see for instance, ~\citep{johnson2013accelerating, defazio2014saga}). Sample complexity lower bounds has also be shown to depend on $n$ under similar settings, see e.g.,~\citep{arjevani2016dimension}.}


\section{Sparse functions}
In the literature on large-scale machine learning, sparsity is a common feature of data. When the data are sparse, each training data point has only a few non-zero features. Under such a setting, each iteration of \sgd only modifies a few dimensions of the decision variables. Some commonly occurring sparse problems include large-scale logistic regression, matrix completion, and graph cuts. 

Sparse data provides a prospective setting under which \rsgd might be powerful. Intuitively, when data are sparse, with-replacement sampling used by \sgd is likely to miss some decision variables, while \rsgd is guaranteed to update all possible decision variables in one epoch. In this section, we show some theoretical results justifying such intuition.

Formally, a sparse finite-sum problem assumes the form
\[F(x) = \frac{1}{n}\sum_{i=1}^n f_i(x_{e_i}),\] where $e_i$ ($1\le i \le n$) denotes a small subset of $\{1, \ldots, d\}$ and $x_{e_i}$ denotes the entries of the vector $x$ indexed by $e_i$. Define the set $E := \{e_i: 1\leq i \leq n\}$. By representing each subset $e_i\subseteq E$ with a node, and considering edges $(e_i, e_j)$ for all $e_i \cap e_j \neq \emptyset$, we get a graph with $n$ nodes. Following the notation in~\citep{recht2011hogwild}, we consider the \emph{sparsity factor} of the graph:
\begin{equation}
  \label{eq:6}
  \rho := \frac{\max\limits_{1\leq i \leq n}\left|\{e_j\in E: e_i \cap e_j \neq \emptyset\}\right|}{n}.
\end{equation}
One obvious fact is $\frac{1}{n}\leq \rho \leq 1$. The statistic \eqref{eq:6} indicates how likely is it that two subsets of indices intersect, which reflects the sparsity of the problem. For a problem with strong sparsity, we may anticipate a relatively small value for $\rho$. We summarize our result with the following theorem:
\begin{theorem}\label{thm52}
Define constant $C = \max\left\{\frac{32}{\mu^2}(L_H LD + 3L_H G), 12(1+\frac{L}{\mu}) \right\}$. So long as $\frac{T}{\log T} > Cn$, with step size $\eta = \frac{8\log T}{ T\mu},$  \rsgd achieves convergence rate: \[\Exp [\norm{x_T - x^*}^2] \leq \mathcal{O}\Bigl(\frac{1}{T^2}+\frac{\rho^2 n^3}{T^3}\Bigr).\]
\end{theorem}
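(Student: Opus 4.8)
The plan is to reuse the entire per-epoch machinery from the proof of Theorem~\ref{thm43} (Appendix~\ref{generalproof}) and to sharpen only the bounds on the permutation-dependent \emph{cross-interaction} terms using sparsity. I would start from the per-epoch expansion: fix an epoch $t$ with start point $x_0^t$, write $g_i := \nabla f_i(x_0^t)$ and $H_i := H_i(x_0^t)$, and Taylor-expand each gradient about $x_0^t$ to get
\begin{equation*}
x_n^t = x_0^t - n\gamma\,\nabla F(x_0^t) + \gamma^2 \sum_{j<k} H_{\sigma_t(k)}\,g_{\sigma_t(j)} + R_t,
\end{equation*}
where $R_t$ collects the $\Oc(\gamma^3)$ and Hessian-Lipschitz remainders (controlled by $L_H$ exactly as in Theorem~\ref{thm43}). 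The first-order term $n\gamma\,\nabla F(x_0^t)$ is permutation-independent and drives the geometric contraction; the quadratic coupling term $C_\sigma := \sum_{j<k} H_{\sigma_t(k)} g_{\sigma_t(j)} = \sum_{a\neq b}\mathbf{1}[a\prec_\sigma b]\,H_b g_a$ is the only place the permutation enters to leading order, and it splits into a deterministic mean $\Exp_\sigma[C_\sigma]=\tfrac12\sum_{a\neq b}H_b g_a$ (the \emph{bias}) plus a zero-mean fluctuation (the \emph{variance}).

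The key observation I would exploit is that these two pieces carry different combinatorial structure, and that only the fluctuation benefits from sparsity. Writing $S=\sum_a g_a = n\nabla F(x_0^t)$, the bias rearranges as $\Exp_\sigma[C_\sigma]=\tfrac12\bigl[(\sum_b H_b)S - \sum_b H_b g_b\bigr]$; the first summand vanishes as $x_0^t\to x^*$, while the \emph{diagonal} self-term $\sum_b H_b g_b$ persists at $x^*$. This persistent term has exactly $n$ summands \emph{regardless} of sparsity (each $H_b g_b$ is a self-interaction), is of size $\Oc(nLG)$, and produces a fixed-point shift of order $\gamma LG/\mu$, i.e.\ the $\Oc(1/T^2)$ contribution in squared distance; hence the $1/T^2$ term is untouched by $\rho$. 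The fluctuation $C_\sigma-\Exp_\sigma[C_\sigma]$, by contrast, lives entirely on \emph{off-diagonal} interacting pairs: since $f_i$ depends only on the coordinates $e_i$, the gradient $g_a$ is supported on $e_a$ and $H_b$ on $e_b\times e_b$, so each product $H_b g_a$ vanishes unless $e_a\cap e_b\neq\emptyset$. By the definition~\eqref{eq:6} of $\rho$, every index $a$ has at most $\rho n$ neighbors, so $C_\sigma$ and its mean have at most $\rho n^2$ nonzero summands instead of $\binom{n}{2}$.

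With this count in hand the argument would proceed exactly as for Theorem~\ref{thm43}. The crude bound $\norm{C_\sigma-\Exp_\sigma C_\sigma}\leq 2\rho n^2 LG$ gives a per-epoch variance $\Exp_\sigma\norm{C_\sigma-\Exp_\sigma C_\sigma}^2=\Oc(\rho^2 n^4 L^2 G^2)$, replacing the $\Oc(n^4)$ bound used before. Feeding this into the same strong-convexity recursion
\begin{equation*}
\Exp\norm{x_n^t - x^*}^2 \leq \bigl(1 - 2n\gamma\mu + \Oc((n\gamma)^2)\bigr)\Exp\norm{x_0^t - x^*}^2 + B_t + \gamma^4\,\Oc(\rho^2 n^4 L^2 G^2),
\end{equation*}
where $B_t$ is the squared bias, and summing the geometric series over the $l=T/n$ epochs, the accumulated variance contributes $\tfrac{\gamma^3}{2n\mu}\,\Oc(\rho^2 n^4 L^2 G^2)=\Oc(\rho^2 n^3(\log T)^3/T^3)$ with $\gamma=\tfrac{8\log T}{T\mu}$, while $B_t$ still yields $\Oc(1/T^2)$ and the $R_t$ remainders (being $\Oc(\gamma^3)$ per epoch) are dominated. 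This gives the claimed $\Oc\bigl(\tfrac{1}{T^2}+\tfrac{\rho^2 n^3}{T^3}\bigr)$ rate under the same hypotheses $\tfrac{T}{\log T}>Cn$ and step size $\eta=\tfrac{8\log T}{T\mu}$.

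The hard part, I expect, is the audit behind the second paragraph: one must verify, term by term through the Theorem~\ref{thm43} expansion, that every $n$-dependent contribution is correctly classified either as a diagonal self-interaction (feeding the $\rho$-independent $1/T^2$ term) or as an off-diagonal cross-interaction supported on the conflict graph (gaining the $\rho$ factor). In particular I would need to check that the Hessian-Lipschitz remainders hidden in $R_t$—which involve differences $H_i(\xi)-H_i(x_0^t)$ still supported on $e_i\times e_i$—inherit the same sparsity pattern, so that they do not secretly reintroduce a dense $n^3/T^3$ term and thereby wipe out the $\rho^2$ improvement.
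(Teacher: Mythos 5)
Your proposal is correct and follows essentially the same route as the paper's proof of Theorem~\ref{thm52}: you reuse the per-epoch recursion of Theorem~\ref{thm43}, isolate the leading permutation coupling whose mean reduces at $x^*$ to the self-interaction term $\sum_i H_i\nabla f_i(x^*)$ (the paper's $\norm{\Delta}\le \frac{1}{n-1}LG$ fact, yielding the $\rho$-independent $1/T^2$ term), and gain the $\rho^2$ factor by counting conflicting pairs with $e_i\cap e_j\neq\emptyset$ — exactly how the paper sharpens its bounds in \eqref{d3}, \eqref{d4} and in the telescoped estimate $\norm{R^t}\le \rho n^2 L\gamma G$. The final audit you flag is precisely what the paper carries out: each within-epoch increment $x^t_j - x^t_{j-1}$ is supported on $e_{\sigma_t(j)}$, so the per-step gradient differences and the Hessian-Lipschitz line integrals both vanish for non-conflicting pairs, and the remainder terms do inherit the sparsity discount.
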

Compared with Theorem~\ref{thm43}, the bound in Theorem~\ref{thm52} depends on the  parameter $\rho$, so we can exploit sparsity to obtain a faster convergence rate. The key to proving Theorem~\ref{thm52} lies in constructing a tighter bound for the error term in the main recursion (see \S\ref{prf}) by including a discount due to sparsity.

We end this section by noting the following simple corollary:
\begin{corollary}
  When $\rho = \mathcal{O}\left(\frac{1}{n}\right)$, there is some constant $C$ only dependent on $\mu$, $L$, $L_H$, $D$, $G$, such that as long as $\frac{T}{\log T} > C n$, for a proper step size, \rsgd achieves convergence rate \[\Exp [\norm{x_T - x^*}^2] \leq \mathcal{O}\Bigl(\frac{1}{T^2} \Bigr).\]
\end{corollary}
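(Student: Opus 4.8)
The plan is to obtain this statement as an immediate specialization of Theorem~\ref{thm52}; essentially all the analytic work is already carried out there, so the corollary only requires substituting the sparsity assumption and simplifying. First I would invoke Theorem~\ref{thm52} verbatim, keeping the same constant $C = \max\left\{\frac{32}{\mu^2}(L_H L D + 3 L_H G),\, 12(1+\frac{L}{\mu})\right\}$, which by inspection depends only on $\mu, L, L_H, D, G$, exactly the dependence asserted in the corollary. Under the hypothesis $\frac{T}{\log T} > Cn$ and the step size $\eta = \frac{8\log T}{T\mu}$, Theorem~\ref{thm52} already furnishes the bound $\Exp[\norm{x_T - x^*}^2] \leq \Oc\bigl(\frac{1}{T^2} + \frac{\rho^2 n^3}{T^3}\bigr)$.

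The remaining step is to show that the sparsity hypothesis $\rho = \Oc(1/n)$ collapses the second term into the first. Substituting $\rho^2 = \Oc(1/n^2)$ yields $\rho^2 n^3 = \Oc(n)$, so the second term becomes $\Oc(n/T^3)$. I would then exploit the running condition on $T$: since $C \geq 1$ and $\log T \geq 1$ for $T \geq e$, the hypothesis $\frac{T}{\log T} > Cn$ forces $n < T$ (in fact $n < \frac{T}{C\log T}$). Hence $\frac{n}{T^3} < \frac{1}{T^2}$, so the $\frac{\rho^2 n^3}{T^3}$ contribution is dominated by $\frac{1}{T^2}$. Combining, the total bound is $\Oc\bigl(\frac{1}{T^2}\bigr)$, as claimed.

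I do not anticipate any genuine obstacle at the level of this corollary: it is a one-line consequence of Theorem~\ref{thm52} together with the elementary inequality $n < T$ implied by the hypothesis. The substantive difficulty lives entirely in Theorem~\ref{thm52} itself, whose proof must establish the sparsity discount factor $\rho^2$ in the error term of the main recursion (see Section~\ref{prf}); that is where the graph structure encoded by the sparsity factor $\rho$ in~\eqref{eq:6} actually enters the analysis. Once that refined recursion is in hand, the present corollary follows mechanically.
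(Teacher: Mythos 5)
Your proposal is correct and matches the paper's intended argument exactly: the paper states this as a ``simple corollary'' of Theorem~\ref{thm52} with no separate proof, the argument being precisely the substitution $\rho = \mathcal{O}(1/n)$ so that $\rho^2 n^3/T^3 = \mathcal{O}(n/T^3)$, which is then dominated by $\mathcal{O}(1/T^2)$ because the running condition $\frac{T}{\log T} > Cn$ (with $C \geq 12(1+\frac{L}{\mu}) > 1$) forces $n < T$. Your identification of the constant $C$ and of where the real work lies (the $\rho^2$ discount in the error terms of Theorem~\ref{thm52}'s recursion) is also accurate.
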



\section{Proof sketch of Theorem~\ref{thm41}}\label{prf}
In this section we provide a proof sketch for Theorem~\ref{thm41}. The central idea is to establish an inequality 
\begin{equation}
  \Exp\bigl[\norm{x^{t+1}_0 - x^*}^2\bigr] \leq (1-n\gamma\alpha_1)\norm{x^t_0 - x^*}^2 + n\gamma^3 \alpha_2 + n^4\gamma^4 \alpha_3,\label{2}
\end{equation}
where $x^t_0$ and $x^{t+1}_0$ are the beginning and final points of the $t$-th epoch, respectively,  and the randomness is over the permutation $\sigma_t(\cdot)$ of functions in epoch $t$. The constant $\alpha_1$ captures the speed of convergence for the linear convergence part, while $\alpha_2$ and $\alpha_3$ together bound the error introduced by randomness. The underlying motivation for the bound~\eqref{2} is: when the latter two terms depend on the step size $\gamma$ with order at least $3$, then by expanding the recursion over all the epochs, and setting $\gamma = \Oc\bigl(\frac{1}{T}\bigr)$, we can obtain a convergence of $\Oc\bigl(\frac{1}{T^2}\bigr)$. 

By the definition of the \rsgd update and simple calculations, we have the following key equality for one epoch of \rsgd:
\begin{small}
\begin{equation*}
  \begin{split}
    \norm{x^{t+1}_0 - x^*}^2 = \norm{x^t_0 - x^*}^2 - \underbrace{2\gamma\langle x^t_0 - x^*, n\nabla F(x^t_0) \rangle}_{A^t_1} - \underbrace{2\gamma\langle x^t_0 - x^*, R^t\rangle}_{A^t_2} + \underbrace{2\gamma^2 \norm{n\nabla F(x^t_0)}^2}_{A^t_3} + \underbrace{2\gamma^2 \norm{R^t}^2}_{A^t_4}.
  \end{split}
\end{equation*}
\end{small}%
The idea behind this equality is to split the progress made by \rsgd in a given epoch into two parts: a part that behaves like full gradient descent ($A^t_1$ and $A^t_3$), and a part that captures the effects of random sampling ($A^t_2$ and $A^t_4$). In particular, for a permutation $\sigma_t(\cdot)$, $R^t$ denotes the gradient error of \rsgd for epoch $t$, i.e., \[R^t = \sum\nolimits_{i=1}^n \nabla f_{\sigma_t(i)} \left(x^t_{i-1}\right) - \sum\nolimits_{i=1}^n \nabla f_{\sigma_t(i)}(x^t_0),\]  
which is a random variable dependent on $\sigma_t(\cdot)$. Thus, the terms $A^t_2$ and $A^t_4$ are also random variables that depend on $\sigma_t(\cdot)$, and require taking expectations. The main body of our analysis involves bounding each of these terms separately.


The term $A_1^t$ can be easily bounded by exploiting the strong convexity of $F$, using a standard inequality (Theorem 2.1.11 in~\citep{nesterov2013introductory}), as follows
\begin{align}
  A^t_1\geq \frac{2n\gamma}{L+\mu}\norm{\nabla F(x^t_0) }^2 + 2n\gamma\frac{L\mu}{L+\mu}\norm{x^t_0-x^*}^2. \label{bound1}
\end{align}
The first term (gradient norm term) in~\eqref{bound1} is used to dominate later emerging terms in our bounds on $A^t_2$ and $A^t_3$, while the second term (distance term) in~\eqref{bound1} will be absorbed into $\alpha_1$ in~\eqref{2}.

A key step toward building~\eqref{2} is to bound $\Exp[A^t_2]$, where the expectation is over $\sigma_t(\cdot)$. However, it is not easy to directly bound this term with $\gamma^3 C$ for some constant $C$. Instead, we decompose this term further into three parts: (i) the first part depends on $\norm{x^t_0 - x^*}^2$ (which will be then captured by $\alpha_1$ in~\eqref{2}); (ii) the second part depends on $\norm{\nabla F(x^t_0)}^2$ (which will be then dominated by gradient norm term in $A^t_1$'s bound~\eqref{bound1}); and (iii) the third part has an at least $\gamma^3$ dependence on $\gamma$ (which will be then jointly captured by $\alpha_2$ and $\alpha_3$ in~\eqref{2}). Specifically, by introducing second-order information and somewhat involved analysis, we obtain the following bound for $A_2^t$:
\begin{lemma}\label{pro}
  Over the randomness of the permutation, we have the inequality:
\begin{nonumber}
\begin{align}
-2\gamma \langle x^t_0 - x^*, \Exp[R^t] \rangle &\leq \frac{1}{2}\gamma\mu(n-1)\norm{x^t_0-x^*}^2 + \gamma^2n^2 \norm{\nabla F\left(x^t_0\right)}^2 \\
&\ \ \ \ \ \ \ \ + \gamma^{3} \mu^{-1}n^{2}(n-1)\norm{\Delta}^2 + 2\mu^{-1}\gamma^5L^4G^2n^5.
\end{align}
\end{nonumber}
Where $\Delta = \Exp\limits_{i\neq j} H_i(x^*) \nabla f_j\left(x^*\right)$ with $i,j$ uniformly drawn from $[n]$.
\end{lemma}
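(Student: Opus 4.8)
The plan is to exploit the quadratic structure ($\nabla f_i(x) = A_i x + b_i$, $H_i \equiv A_i$) to write $R^t$ in closed form and isolate its leading behaviour in the step size. Writing $u := x^t_0 - x^*$ and using $\nabla f_{\sigma_t(i)}(x^t_{i-1}) - \nabla f_{\sigma_t(i)}(x^t_0) = A_{\sigma_t(i)}(x^t_{i-1} - x^t_0)$, I would first expand the within-epoch displacement $x^t_{i-1} - x^t_0 = -\gamma\sum_{j<i}\nabla f_{\sigma_t(j)}(x^t_{j-1})$ and substitute $\nabla f_{\sigma_t(j)}(x^t_{j-1}) = \nabla f_{\sigma_t(j)}(x^t_0) + A_{\sigma_t(j)}(x^t_{j-1}-x^t_0)$. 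This splits $R^t = R^t_{\mathrm{lead}} + R^t_{\mathrm{rem}}$, where the leading part $R^t_{\mathrm{lead}} = -\gamma\sum_{j<i}A_{\sigma_t(i)}\nabla f_{\sigma_t(j)}(x^t_0)$ is linear in $\gamma$ and the remainder collects all terms carrying an extra displacement factor (hence an extra power of $\gamma$).

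The core computation is the expectation of the leading part over a uniform permutation. For fixed $i\neq j$ the ordered pair $(\sigma_t(i),\sigma_t(j))$ is uniform over distinct pairs, and there are $\binom{n}{2}$ index pairs with $j<i$, so $\Exp[R^t_{\mathrm{lead}}] = -\gamma\frac{n(n-1)}{2}\,\Exp_{a\neq b}[A_a\nabla f_b(x^t_0)]$. I would then expand around $x^*$ via $\nabla f_b(x^t_0) = \nabla f_b(x^*) + A_b u$, giving $\Exp_{a\neq b}[A_a\nabla f_b(x^t_0)] = \Delta + M u$ with $M = \Exp_{a\neq b}[A_aA_b]$. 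The point of this split is that the $Mu$ piece reproduces a gradient-norm term: using $\sum_{a\neq b}A_aA_b = n^2\bar A^2 - \sum_a A_a^2$ (with $\bar A = \frac1n\sum_a A_a$) and the identity $\nabla F(x^t_0) = \bar A u$, one gets $\langle u, Mu\rangle \le \frac{n}{n-1}\norm{\nabla F(x^t_0)}^2$ since the $-\sum_a\norm{A_au}^2$ contribution is nonpositive. Hence the $Mu$ contribution to $-2\gamma\langle u,\Exp[R^t]\rangle$ is at most $\gamma^2 n^2\norm{\nabla F(x^t_0)}^2$, exactly the second term of the lemma.

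The remaining $\Delta$ piece, $\gamma^2 n(n-1)\langle u,\Delta\rangle$, I would handle by Young's inequality $\langle u,\Delta\rangle \le \frac{\mu}{2\gamma n}\norm{u}^2 + \frac{\gamma n}{2\mu}\norm{\Delta}^2$, tuned so the first term becomes exactly $\frac12\gamma\mu(n-1)\norm{u}^2$ (the distance term, later absorbed by $\alpha_1$ in~\eqref{2}) and the second becomes an $\Oc(\gamma^3)$ term proportional to $\norm{\Delta}^2$. Finally the remainder $R^t_{\mathrm{rem}}$ is bounded crudely in norm using $\norm{A_i}\le L$, $\norm{\nabla f_i}\le G$ and the per-step displacement bound $\norm{x^t_{j-1}-x^t_0}\le \gamma(j-1)G$, which yields $\norm{\Exp[R^t_{\mathrm{rem}}]} = \Oc(\gamma^2 L^2 G n^3)$; pairing with $u$ and applying a second Young's inequality, with its parameter chosen to keep the residual genuinely higher order in $\gamma$, produces the $2\mu^{-1}\gamma^5 L^4 G^2 n^5$ term, while the spurious lower-order $\norm{u}^2$ and $\norm{\Delta}^2$ pieces it generates are absorbed into the slack left in the first terms (note the lemma's $\norm{\Delta}^2$ coefficient is twice what the leading term alone produces).

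I expect the main obstacle to be the without-replacement bookkeeping: unlike \sgd, the summands in $R^t$ are statistically coupled, so the permutation expectation of $R^t_{\mathrm{lead}}$ must be evaluated exactly through the distinct-pair argument, and one must verify that the expansion around $x^*$ cleanly separates the genuinely second-order object $\Delta$ from the gradient-norm and distance terms. The second delicate point is the accounting in the final step: ensuring that bounding the nonlinear remainder does not contaminate the $\norm{u}^2$ and $\norm{\nabla F(x^t_0)}^2$ coefficients beyond what the subsequent recursion~\eqref{2} can absorb, which is precisely what forces the residual into the high-order $\gamma^5$ form rather than a cheaper $\Oc(\gamma^3)$ bound involving the diameter $D$.
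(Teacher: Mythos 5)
Your proposal follows the paper's own proof essentially step for step: the same decomposition $R^t = A^t + B^t$ (your $R^t_{\mathrm{lead}} + R^t_{\mathrm{rem}}$), the same distinct-pair computation $\Exp[A^t] = -\tfrac{n(n-1)}{2}\gamma\,\Exp_{i\neq j}[H_i \nabla f_j(x^t_0)]$, the same expansion around $x^*$ separating $\Delta$ from the Hessian--Hessian term (your identity $\sum_{a\neq b}A_aA_b = n^2\bar A^2 - \sum_a A_a^2$ with the nonpositive $-\sum_a\norm{A_a u}^2$ discarded is precisely the paper's step of adding nonnegative diagonal terms and using $\Exp_{i,j}H_iH_j = H^2$, and it yields the identical $\gamma^2 n^2\norm{\nabla F(x^t_0)}^2$ term), the same bound $\norm{B^t}\leq \tfrac{1}{2}\gamma^2 L^2 G n^3$, and the same two applications of Young's inequality.

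The one genuine flaw is in your budget accounting for the Young parameters. You tune the $\Delta$-piece inequality $\langle u,\Delta\rangle \leq \tfrac{\mu}{2\gamma n}\norm{u}^2 + \tfrac{\gamma n}{2\mu}\norm{\Delta}^2$ so that it alone consumes the entire $\tfrac{1}{2}\gamma\mu(n-1)\norm{u}^2$ allowance of the lemma, and you then claim the spurious $\norm{u}^2$ piece generated by the second Young's inequality (on $-2\gamma\langle u,\Exp[B^t]\rangle$) can be absorbed into the factor-of-two slack you observe in the $\norm{\Delta}^2$ coefficient. That absorption cannot work: the slack is in the wrong currency, since a $\norm{u}^2$ term cannot be dominated by a $\norm{\Delta}^2$ term ($\Delta$ may vanish identically --- e.g.\ in the vanishing-variance setting of Theorem~\ref{last}, where $\nabla f_i(x^*)=0$ for all $i$ --- while $u\neq 0$), so as written your final inequality carries a distance coefficient strictly exceeding $\tfrac{1}{2}\gamma\mu(n-1)$. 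The fix is the paper's allocation: take $\lambda_1 = \tfrac{1}{2}\mu\gamma^{-1}n^{-1}$ in the $\Delta$ piece, which produces only $\tfrac{1}{4}\gamma\mu(n-1)\norm{u}^2$ but costs the \emph{full} $\gamma^3\mu^{-1}n^2(n-1)\norm{\Delta}^2$ (the factor-of-two slack you noticed is an artifact of your over-aggressive tuning and disappears here), and spend the remaining $\tfrac{1}{4}\gamma\mu(n-1)\norm{u}^2$ on the $B^t$ piece via $\lambda_2 = \tfrac{1}{4}\mu(n-1)$, giving $\tfrac{4\gamma}{\mu(n-1)}\norm{\Exp[B^t]}^2 \leq \mu^{-1}(n-1)^{-1}\gamma^5 L^4 G^2 n^6 \leq 2\mu^{-1}\gamma^5 L^4 G^2 n^5$. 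With that retuning, your argument reproduces the lemma's constants exactly, including the $\gamma^5$ residual you correctly insisted on in place of a cheaper $\Oc(\gamma^3 D)$ bound.
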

Since $x^*$ is the minimizer, we have an elegant bound on the second-order interaction term:
\begin{lemma}
Define $\Delta = \Exp\limits_{i\neq j} H_i(x^*) \nabla f_j\left(x^*\right)$ with $i,j$ uniformly drawn from $[n]$, and $x^*$ is the minimizer of sum function, then \[\norm{\Delta} \leq \tfrac{1}{n-1} LG.\]
\end{lemma}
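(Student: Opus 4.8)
The plan is to exploit the first-order optimality condition at $x^*$ to collapse the double sum defining $\Delta$ into a single sum, after which the bound follows from the smoothness and gradient assumptions. First I would write out the conditional expectation explicitly. Since $i,j$ are drawn uniformly from $[n]$ conditioned on $i \neq j$, there are $n(n-1)$ equally likely ordered pairs, so
\begin{equation*}
  \Delta = \frac{1}{n(n-1)} \sum_{i=1}^n \sum_{j \neq i} H_i(x^*) \nabla f_j(x^*).
\end{equation*}

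The key step is to use that $x^*$ minimizes $F = \frac{1}{n}\sum_i f_i$, so that $\nabla F(x^*) = 0$, i.e. $\sum_{j=1}^n \nabla f_j(x^*) = 0$. Holding $i$ fixed and pulling $H_i(x^*)$ out of the inner sum, I would rewrite $\sum_{j \neq i} \nabla f_j(x^*) = \bigl(\sum_{j=1}^n \nabla f_j(x^*)\bigr) - \nabla f_i(x^*) = -\nabla f_i(x^*)$. Substituting this in collapses the double sum to
\begin{equation*}
  \Delta = -\frac{1}{n(n-1)} \sum_{i=1}^n H_i(x^*)\, \nabla f_i(x^*).
\end{equation*}
This cancellation is the heart of the argument, and is what produces the crucial $\frac{1}{n-1}$ factor rather than a term of order $1$.

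With $\Delta$ now a sum of only $n$ terms, the bound is routine. I would apply the triangle inequality and submultiplicativity of the spectral norm to get $\norm{H_i(x^*)\nabla f_i(x^*)} \leq \norm{H_i(x^*)}\,\norm{\nabla f_i(x^*)}$. The $L$-smoothness assumption~\eqref{eq:4} implies $\norm{H_i(x^*)} \leq L$ (the Hessian's spectral norm is controlled by the Lipschitz constant of the gradient), and the gradient bound gives $\norm{\nabla f_i(x^*)} \leq G$. Hence each summand is at most $LG$, and
\begin{equation*}
  \norm{\Delta} \leq \frac{1}{n(n-1)} \sum_{i=1}^n \norm{H_i(x^*)}\,\norm{\nabla f_i(x^*)} \leq \frac{1}{n(n-1)} \cdot n \cdot LG = \frac{1}{n-1} LG,
\end{equation*}
which is the claimed inequality. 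The only nontrivial obstacle is recognizing and applying the optimality-based cancellation; everything afterward is a direct estimate, so I do not anticipate any real difficulty there.
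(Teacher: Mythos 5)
Your proof is correct and follows essentially the same route as the paper's: both write $\Delta$ as the normalized double sum, use the first-order optimality condition $\sum_j \nabla f_j(x^*) = 0$ to collapse $\sum_{j\neq i}\nabla f_j(x^*) = -\nabla f_i(x^*)$, and then bound the resulting single sum via $\norm{H_i(x^*)}\leq L$ and $\norm{\nabla f_i(x^*)}\leq G$ (compare the chain of equalities culminating in~\eqref{a11}). Your explicit justification of $\norm{H_i(x^*)}\leq L$ from the $L$-smoothness assumption~\eqref{eq:4} is a detail the paper leaves implicit, but the argument is the same.
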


We tackle $A^t_3$ by dominating it with the gradient norm term of $A^t_1$'s bound~\eqref{bound1}, and finally bound the second permutation dependent term $\Exp[A^t_4]$ using the following lemma.
\begin{lemma}
  For any possible permutation in the $t$-th epoch, we have bound
\[\norm{R^t} \leq  \frac{n\left(n-1\right)}{2}\gamma GL.\]
\end{lemma}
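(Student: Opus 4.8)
The plan is to bound $\norm{R^t}$ directly by the triangle inequality, peeling off one summand at a time and controlling how far each intermediate iterate $x^t_{i-1}$ has drifted from the epoch's starting point $x^t_0$. Writing
\[
R^t = \sum\nolimits_{i=1}^n \bigl(\nabla f_{\sigma_t(i)}(x^t_{i-1}) - \nabla f_{\sigma_t(i)}(x^t_0)\bigr),
\]
the triangle inequality gives $\norm{R^t} \leq \sum_{i=1}^n \norm{\nabla f_{\sigma_t(i)}(x^t_{i-1}) - \nabla f_{\sigma_t(i)}(x^t_0)}$, and the key observation is that each summand compares the \emph{same} component function at two nearby points, so the $L$-smoothness assumption~\eqref{eq:4} applies cleanly to yield $\norm{\nabla f_{\sigma_t(i)}(x^t_{i-1}) - \nabla f_{\sigma_t(i)}(x^t_0)} \leq L\norm{x^t_{i-1} - x^t_0}$.

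Second, I would control the drift $\norm{x^t_{i-1} - x^t_0}$ by unrolling the \rsgd recursion within the epoch. Since $x^t_k = x^t_{k-1} - \gamma \nabla f_{\sigma_t(k)}(x^t_{k-1})$, telescoping gives $x^t_{i-1} - x^t_0 = -\gamma \sum_{j=1}^{i-1} \nabla f_{\sigma_t(j)}(x^t_{j-1})$. Applying the triangle inequality once more and invoking the bounded-gradient condition $\norm{\nabla f_i(x)} \leq G$ (item (iii) in the setup, valid because every iterate stays in the region where (ii) holds), I obtain $\norm{x^t_{i-1} - x^t_0} \leq \gamma (i-1) G$.

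Combining the two steps yields $\norm{\nabla f_{\sigma_t(i)}(x^t_{i-1}) - \nabla f_{\sigma_t(i)}(x^t_0)} \leq \gamma (i-1) LG$, and summing over $i$ with the arithmetic-series identity $\sum_{i=1}^n (i-1) = \tfrac{n(n-1)}{2}$ gives exactly
\[
\norm{R^t} \leq \gamma LG \cdot \frac{n(n-1)}{2} = \frac{n(n-1)}{2}\gamma GL,
\]
which is the claimed bound. Note that the estimate holds for \emph{every} realization of the permutation $\sigma_t$, since at no point did I use randomness — this is a deterministic worst-case bound, consistent with the lemma's phrasing ``for any possible permutation.''

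The argument is essentially routine once the two ingredients ($L$-smoothness to convert gradient differences into point differences, and the step-size times bounded-gradient estimate to measure intra-epoch drift) are lined up, so I do not anticipate a serious obstacle. The only point requiring care is the legitimacy of applying the uniform gradient bound $G$ at each intermediate iterate $x^t_{j-1}$: this rests on the standing assumption that all iterates remain within the bounded diameter region of item (ii) (enforceable by projection if necessary), so I would state this dependence explicitly rather than leave it implicit. A minor tightening is also possible — one could carry the $(i-1)$ factor more sharply — but since the downstream use in~\eqref{2} only needs an $\Oc(n^2 \gamma GL)$-type bound, the clean $\tfrac{n(n-1)}{2}$ form suffices.
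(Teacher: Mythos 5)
Your proof is correct and takes essentially the same approach as the paper's: triangle inequality, $L$-smoothness to convert gradient differences into iterate distances, the per-step bound $\norm{x^t_j - x^t_{j-1}} = \gamma\norm{\nabla f_{\sigma_t(j)}(x^t_{j-1})} \leq \gamma G$, and the arithmetic series $\sum_{i=1}^n (i-1) = \frac{n(n-1)}{2}$, all holding deterministically for every permutation. The only cosmetic difference is the order of operations --- the paper telescopes each gradient difference across consecutive iterates and applies smoothness to every increment, whereas you apply smoothness once and then telescope the iterate drift --- which produces the identical bound.
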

Using this bound, the term $\Exp[A^t_4]$ can be captured by $\alpha_3$ in~\eqref{2}.

Based on the above results, we get a recursive inequality of the form~\eqref{2}. Expanding the recursion and substituting into it the step-size choice ultimately leads to an bound of the form $\mathcal{O}\bigl(\frac{1}{T^2} + \frac{n^3}{T^3}\bigr)$ (see~\eqref{a16} in the Appendix for dependence on hidden constants). The detailed technical steps can be found in Appendix~\ref{th1}.


\section{Discussion of results}\label{discuss}
We discuss below our results in more detail, including their implications, strengths, and limitations.

\paragraph{Comparison with \sgd.} It is well-known that under strong convexity \sgd converges with a rate of $\mathcal{O}\bigl(\frac{1}{T}\bigr)$~\citep{rakhlin2012making}. A direct comparison indicates the following fact: \rsgd is provably better than \sgd after $\mathcal{O}(\sqrt{n})$ epochs. This is an acceptable amount of epochs for even some of the largest data sets in current machine learning literature. To our knowledge, this is the \emph{first} result rigorously showing that \rsgd behaves better than \sgd within a reasonable number of epochs. To some extent, this result confirms the belief and observation that \rsgd is the ``correct'' choice in real life, at least when the number of epochs is comparable with $\sqrt{n}$.

\paragraph{Deterministic variant.} When the algorithm is run in a deterministic fashion, i.e., the functions $f_i$ are visited in a fixed order, better convergence rate than \sgd can also be achieved as $T$ becomes large. For instance, a result in~\citep{gurbuzbalaban2015convergence} translates into a $\mathcal{O}\bigl(\frac{n^2}{T^2}\bigr)$ bound for the deterministic case. This directly implies the same bound for \rsgd, since random permutation always has the weaker worst case. But according to this bound, at least $n$ epochs are required for \rsgd to achieve an error smaller than \sgd, which is not a realistic number of epochs in most applications.

\paragraph{Comparison with \gd.} Another interesting viewpoint is by comparing \rsgd with Gradient Descent (\gd). One of the limitations of our result is that we do not  show a regime where \rsgd can be better than \gd. By computing the average for each epoch and running exact \gd on~\eqref{1}, one can get a convergence rate of the form $\Oc(\exp(-T/n))$. This fact shows that our convergence rate for \rsgd is worse than \gd. This comes naturally from the epoch based recursion (\ref{2}) in our proof methodology, since for one epoch the sum of the gradients is only shown to be no worse than a full gradient. It is true that \gd should behave better in long-term as the dependence on $n$ is negligible, and comparing with \gd is not the major goal for this paper. However, being worse than \gd even when $T$ is relatively small indicates that the dependence on $n$ probably can still be improved. It may be worth investigating whether \rsgd can be better than both \sgd and \gd in some regime. However, different techniques may be required.

\paragraph{Epochs required.} It is also a limitation that our bound only holds after a certain number of epochs. Moreover, this number of epochs is dependent on $\kappa$ (e.g., $\mathcal{O}(\kappa)$ epochs for the quadratic case). This limits the interest of our result to cases when the problem is not too ill-conditioned. Otherwise, such a number of epochs will be unrealistic by itself. We are currently not certain whether similar bounds can be proved when allowing $T$ to assume smaller values, or even after only one epoch.

\paragraph{Dependence on $\kappa$.}
It should be noticed that $\kappa$ can be large sometimes. Therefore, it may be informative to view our result in a $\kappa$-dependent form. In particular, we still assume $D$, $L$, $L_H$ are constant, but no longer $\mu$. We use the bound $G\leq \max_i{\norm{\nabla f_i(x^*)} + DL}$ and assume $\max_i{\norm{\nabla f_i(x^*)}}$ is constant. Since $\kappa = \nf{L}{\mu}$, we now have $\kappa = \Theta(\nf{1}{\mu})$. 
Our results translate into $\kappa$-dependent convergence rates of $ \mathcal{O}(\nf{\kappa^4}{T^2} + \nf{\kappa^4n^3}{T^3} + \nf{\kappa^6n^4}{T^4})$ (see inequalities~\eqref{a16}~\eqref{b12} in the Appendix). The corresponding $\kappa$-dependent sample complexity turns into $\mathcal{O}(\kappa n + \kappa^2\epsilon^{-\nf12} + n\kappa^{\nf{4}{3}}\epsilon^{-\nf13} + n\kappa^{\nf{3}{2}}\epsilon^{-\nf14})$ for quadratic problems, and $\mathcal{O}(\kappa^2 n + \kappa^2\epsilon^{-\nf12} + n\kappa^{\nf{4}{3}}\epsilon^{-\nf13} + n\kappa^{\nf{3}{2}}\epsilon^{-\nf14})$ for strongly convex ones.

At first sight, the dependence on $\kappa$ in the convergence rate may seem relatively high. However, it is important to notice that our sample complexity's dependence on $\kappa$ is actually \emph{better} than what is known for \sgd. A $\Oc(\frac{4G^2}{T\mu^2})$ convergence bound for \sgd has long been known~\cite{rakhlin2012making}, which translates into a $\mathcal{O}(\frac{\kappa^2}{\epsilon})$, $\kappa$-dependent sample complexity in our notation. Although better $\kappa$ dependence has been shown for $F(x_T) - F(x^*) < \epsilon$ (see e.g.,~\citep{hazan2014beyond}), no better dependence has been shown for $\Exp[\norm{x_T - x^*}^2]<\epsilon$ as far as we know. Furthermore, according to~\cite{nemirovskii1983problem}, the lower bound to achieve $F(x_T) - F(x^*) < \epsilon$ for strongly convex $F$ using stochastic gradients is $\Omega( \nf{\kappa}{\epsilon})$. Translating this into the sample complexity to achieve $\Exp[\norm{x_T - x^*}^2]<\epsilon$  is likely to introduce another $\kappa$ into the bound. Therefore, it is reasonable to believe that $\mathcal{O}(\nf{\kappa^2}{\epsilon})$ is the best sample complexity one can get for \sgd (which is worse than \rsgd), to achieve $\Exp[\norm{x_T - x^*}^2]<\epsilon$.
 
\paragraph{Sparse data setting.}
Notably, in the sparse setting (with sparsity factor $\rho = \mathcal{O}\bigl(\frac{1}{n}\bigr)$), the proven convergence rate is strictly better than the $\mathcal{O}\bigl(\frac{1}{T}\bigr)$ rate of \sgd. This result follows the following intuition: when each dimension is only touched by several functions, letting the algorithm to visit every function would avoid missing certain dimensions. For larger $\rho$, similar speedup can be observed. In fact, so long as we have $\rho = o(n^{-\nf{1}{2}})$, the proven bound is better off than \sgd. Such a result confirms the usage of \rsgd under sparse setting.


\section{Extensions}
\label{sec:conc}
In this section, we provide some further extensions before concluding with some open problems.

\vspace*{-8pt}
\subsection{\rsgd for nonconvex optimization}\label{lpcondition}
The first extension that we discuss is to nonconvex finite sum problems. In particular, we study \rsgd applied to functions satisfying the \emph{Polyak-\L{}ojasiewicz} condition (also known as gradient dominated functions): \[\frac{1}{2} \norm{\nabla F(x)}^2 \geq \mu (F(x) - F^*),\ \ \ \  \forall x.\]
Here $\mu>0$ is some real number, $F^*$ is the minimal function value of $F(\cdot)$. Strongly convexity is a special situation of this condition with $\mu$ being the strongly convex parameter. One important implication of this condition is that every stationary point is a global minimum. However function $F$ can be non-convex under such setting. Also, it doesn't imply a unique minimum of the function. 

This setting was proposed and analyzed in~\citep{polyak1963gradient}, where a linear convergence rate for \gd was shown. Later, many other optimization methods have been proven efficient under this condition (see~\citep{nesterov2006cubic} for second order methods and~\citep{reddi2016stochastic} for variance reduced gradient methods). Notably, \sgd can be proven to converge with rate $\Oc(\nicefrac{1}{T})$ under this setting (see appendix for a proof).

Assume each component function $f_i$ being $L$ Lipschitz continuous, and the average function $F(x)$ satisfying the Polyak-\L ojasiewicz condition with some constant $\mu$. We have the following extension of our previous result:

\begin{theorem}\label{thpl}
Under the Polyak-\L ojasiewicz condition, define condition number $\kappa = L/\mu$. So long as $\frac{T}{\log T} > 16\kappa^2 n$,  with step size $\eta = \frac{2\log T}{ T\mu}$, \rsgd achieves convergence rate: \[\Exp [\norm{x_T - x^*}^2] \leq \mathcal{O}\Bigl(\frac{1}{T^2} + \frac{n^3}{T^3}\Bigr).\]
\end{theorem}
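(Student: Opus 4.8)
\textbf{Proof proposal for Theorem~\ref{thpl}.}

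The plan is to mimic the epoch-based recursion~\eqref{2} used for the strongly convex case (Theorem~\ref{thm43}), replacing the role of strong convexity by the Polyak-\L{}ojasiewicz (PL) inequality, but working with \emph{function-value suboptimality} rather than distance to a minimizer as the natural Lyapunov quantity. Since the PL condition does not guarantee a unique minimizer, the cleanest route is to first establish a one-epoch contraction of the form
\begin{equation}
  \Exp\bigl[F(x^{t+1}_0) - F^*\bigr] \leq (1 - c\,n\gamma\mu)\bigl(F(x^t_0) - F^*\bigr) + n\gamma^3\beta_2 + n^4\gamma^4\beta_3,\label{plrec}
\end{equation}
and only at the very end convert back to $\Exp[\norm{x_T - x^*}^2]$ using the quadratic growth consequence of PL (namely $\tfrac{\mu}{2}\norm{x - x^*}^2 \leq F(x) - F^*$ for the projection $x^*$ of $x$ onto the solution set). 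First I would decompose one epoch exactly as in Section~\ref{prf}, splitting the progress into a full-gradient-descent part and a sampling-error part governed by the same error term $R^t = \sum_i \nabla f_{\sigma_t(i)}(x^t_{i-1}) - \sum_i \nabla f_{\sigma_t(i)}(x^t_0)$.

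Next I would re-derive the analogues of the three supporting lemmas of Section~\ref{prf}. The bound $\norm{R^t} \leq \tfrac{n(n-1)}{2}\gamma GL$ uses only $L$-smoothness of the components and the assumed Lipschitz/gradient bound, so it carries over verbatim and controls $\Exp[A^t_4]$, feeding $\beta_3$. For the descent part, instead of the Nesterov co-coercivity inequality~\eqref{bound1} (which needs convexity), I would use $L$-smoothness of $F$ to write the standard descent lemma $F(x^t_0 - n\gamma\nabla F(x^t_0)) \leq F(x^t_0) - n\gamma(1 - \tfrac{Ln\gamma}{2})\norm{\nabla F(x^t_0)}^2$, and then invoke the PL inequality $\norm{\nabla F(x^t_0)}^2 \geq 2\mu(F(x^t_0) - F^*)$ to produce the contraction factor $(1 - cn\gamma\mu)$; this is exactly where the $\kappa^2$ in the epoch threshold $\tfrac{T}{\log T} > 16\kappa^2 n$ should appear, since ensuring $1 - \tfrac{Ln\gamma}{2} \geq \tfrac12$ together with the PL contraction forces $n\gamma = \Oc(1/L)$, i.e.\ $n\gamma\mu = \Oc(1/\kappa)$, one factor of $\kappa$ worse than the strongly convex case. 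For the cross term $\Exp[A^t_2]$ I would reuse Lemma~\ref{pro} essentially unchanged, as its derivation (a second-order Taylor expansion of $\nabla f_{\sigma_t(i)}$ about $x^t_0$, the zero-mean structure of the permutation, and the Hessian-gradient interaction $\Delta$) never used convexity of the sum, only smoothness and the Hessian-Lipschitz bound~\eqref{eq:5}; the term it produces proportional to $\norm{\nabla F(x^t_0)}^2$ is then absorbed by the descent term, and the higher-order-in-$\gamma$ residuals go into $\beta_2$.

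Finally, once~\eqref{plrec} is in hand I would unroll the recursion over all $l = T/n$ epochs exactly as in Appendix~\ref{th1}: the geometric factor $(1 - cn\gamma\mu)^l$ with $\gamma = \tfrac{2\log T}{T\mu}$ drives the initial suboptimality below the $\Oc(1/T^2)$ floor (the threshold $\tfrac{T}{\log T} > 16\kappa^2 n$ guarantees both that $n\gamma$ is small enough for the descent step and that enough contraction accumulates), while the two error terms summed against the geometric series contribute $\Oc(\gamma^2) = \Oc(1/T^2)$ and $\Oc(n^3\gamma^3) = \Oc(n^3/T^3)$ respectively, yielding $\Exp[F(x_T) - F^*] = \Oc(\tfrac{1}{T^2} + \tfrac{n^3}{T^3})$; a single application of quadratic growth then gives the stated bound on $\Exp[\norm{x_T - x^*}^2]$. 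I expect the main obstacle to be the descent-part handling: without convexity one cannot call on~\eqref{bound1} to simultaneously extract a clean distance-contraction term \emph{and} a spare gradient-norm term, so care is needed to ensure that the single $\norm{\nabla F(x^t_0)}^2$ budget from the smoothness descent lemma is large enough to both supply the PL contraction and dominate the $\gamma^2 n^2\norm{\nabla F(x^t_0)}^2$ leakage from Lemma~\ref{pro} and the $A^t_3$ term; verifying that the step-size and epoch conditions make this accounting consistent (and that the non-uniqueness of $x^*$ causes no trouble because every quantity in the recursion depends only on $F - F^*$ until the very last step) is the delicate part.
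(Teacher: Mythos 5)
Your overall architecture matches the paper's actual proof of Theorem~\ref{thpl}: a recursion in \emph{function-value suboptimality} obtained by applying the $L$-smoothness descent lemma to the aggregated epoch step $x^{t+1}_0 = x^t_0 - \gamma\left(n\nabla F(x^t_0) + R^t\right)$, the PL inequality supplying the contraction factor $(1-2n\mu\gamma)$, the verbatim $\norm{R^t}\leq \frac{n(n-1)}{2}\gamma GL$ bound for the quadratic-in-$R^t$ term, unrolling over $T/n$ epochs, and the threshold $\frac{T}{\log T} > 16\kappa^2 n$ arising precisely from forcing $n\gamma \lesssim \mu/L^2$ (your diagnosis of where the extra factor of $\kappa$ enters is correct). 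Your explicit quadratic-growth conversion at the end is, if anything, more careful than the paper, whose final display keeps $F(x_0)-F^*$ on the right-hand side and, per the caption of Table~\ref{sample-table}, really measures suboptimality in function value.

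The genuine gap is in the cross term: your plan to ``reuse Lemma~\ref{pro} essentially unchanged'' does not fit the recursion you have committed to. In the function-value recursion the cross term is $-\gamma\left\langle \nabla F(x^t_0), \Exp[R^t]\right\rangle$, whereas Lemma~\ref{pro} bounds $-2\gamma\left\langle x^t_0 - x^*, \Exp[R^t]\right\rangle$; these are different objects. If you instead retain the distance-squared expansion of Section~\ref{prf} so that Lemma~\ref{pro} literally applies, the recursion collapses at the first term: without convexity, $A^t_1 = 2\gamma n\left\langle x^t_0 - x^*, \nabla F(x^t_0)\right\rangle$ has no guaranteed sign under PL alone (PL does not imply $\left\langle x - x^*, \nabla F(x)\right\rangle \geq 0$ for nonconvex $F$), a difficulty you half-acknowledge when noting that \eqref{bound1} is unavailable, yet your plan still pairs the function-value descent with the distance-based cross-term lemma. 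Moreover, even if you imported Lemma~\ref{pro}'s conclusion and converted via quadratic growth, its term $\frac{1}{2}\gamma\mu(n-1)\norm{x^t_0 - x^*}^2$ becomes roughly $\gamma(n-1)\left(F(x^t_0)-F^*\right)$, which overwhelms the entire contraction budget $2n\mu\gamma\left(F(x^t_0)-F^*\right)$ whenever $\mu < \frac{1}{4}$; the AM--GM weights inside the lemma would have to be retuned by a factor of order $\mu/L$, which you do not do. The paper's resolution is exactly this repair: take all inner products against $\nabla F(x^t_0)$, Taylor-expand the component Hessians at $x^t_0$ rather than at a minimizer --- so that neither $\Delta$ nor the cancellation $\norm{\Delta}\leq \frac{LG}{n-1}$ is needed, the crude bound $\norm{\Exp_i\left[H_i(x^t_0)\nabla f_i(x^t_0)\right]}\leq LG$ on the diagonal correction already yielding an $\Oc(n\gamma^3)$ term --- and absorb all gradient-norm leakage using $\frac{\mu}{L}$-weighted AM--GM together with $\norm{\nabla F(x^t_0)}^2 \leq 2L\left(F(x^t_0)-F^*\right)$. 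With that substitution in place of your cross-term step, the rest of your outline goes through and coincides with the paper's argument.
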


\subsection{\rsgd for convex problems}\label{genconv}
An important extension of \rsgd is to the general (smooth) convex case without assuming strong convexity. There are no previous results on the convergence rate of \rsgd in this setting that show it to be faster than \sgd. The only result we are aware of is by~\citet{shamir2016without}, who shows \rsgd to be not worse than \sgd in the general (smooth) convex setting. We extend our results to the general convex case, and show a convergence rate that is possibly faster than \sgd, albeit only up to constant terms.

We take the viewpoint of gradients with errors, and denote the difference between component gradient and full gradient as the error: \[\nabla F(x) - \nabla f_i(x) = e_i(x).\] Different assumptions bounding the error term $e_i(x)$ have been studied in optimization literature. We assume that there is a constant $\delta$ that bound the norm of the gradient  error: \[\norm{e_i(x)} \leq \delta,\ \ \ \ \forall x.\] Here $i$ is any index and $x$ is any point in domain. Obviously, $\delta \leq 2G$, with $G$ being the gradient norm bound as before.\footnote{Another common assumption is when the variance of the gradient (i.e., $\Exp[\norm{e_i(x)}^2]$) is bounded. We made the more rigorous assumption here for ease of a simpler analysis. However, there is at most an extra $\sqrt{n}$ term difference between these two assumptions due to the finite sum structure.} 

\begin{theorem}\label{ext1}
Assume $\Delta = \Exp\limits_{i\neq j} H_i(x^*) \nabla f_j\left(x^*\right)$ with $i,j$ uniformly drawn from $[n]$, $x^*$ is an arbitrary minimizer of $F$.
Set stepsize \[\gamma = \min\left\{\frac{1}{16nL}, \sqrt{\frac{D}{Tn\left(\norm{\Delta}+L_H LD^2 + 2L_HDG\right)}}, \left(\frac{D}{Tn^2L^2\delta}\right)^\frac{1}{3}, \left(\frac{1}{Tn^3L^4}\right)^\frac{1}{4} \right\}.\] Assume $\bar{x} = \frac{n}{T} \sum_{i=1}^{T/n} x^i_0$ being the average of epoch ending points of \rsgd. Then there is 
\[F(\bar{x}) - F(x^*) \leq \frac{2D\sqrt{nD\left(\norm{\Delta}+L_H LD^2 + 2L_HDG\right)}}{\sqrt{T}} + \Oc\left(\left(\frac{n}{T}\right)^\frac{2}{3}\delta^\frac{1}{3} + \left(\frac{n}{T}\right)^\frac{3}{4}\right).\]
\end{theorem}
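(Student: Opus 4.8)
The plan is to mimic the epoch-based recursion that drove Theorems~\ref{thm41} and~\ref{thm43}, but to work with function-value suboptimality $F(x^t_0)-F(x^*)$ as the potential instead of squared distance, since strong convexity is no longer available to convert between the two. First I would start from the same one-epoch identity used in Section~\ref{prf}, namely
\begin{align*}
\norm{x^{t+1}_0 - x^*}^2 = \norm{x^t_0 - x^*}^2 - 2\gamma\langle x^t_0 - x^*, n\nabla F(x^t_0)\rangle - 2\gamma\langle x^t_0 - x^*, R^t\rangle + 2\gamma^2\norm{n\nabla F(x^t_0)}^2 + 2\gamma^2\norm{R^t}^2,
\end{align*}
where $R^t$ is the epoch gradient-error term. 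The crucial difference from the strongly convex case is how I handle the main inner-product term: by plain convexity, $\langle x^t_0 - x^*, \nabla F(x^t_0)\rangle \ge F(x^t_0) - F(x^*)$, so the ``progress'' term is now lower-bounded by the suboptimality gap rather than by a distance term. After taking expectations over $\sigma_t$ and rearranging, I would obtain a recursion of the shape
\begin{align*}
\Exp[\norm{x^{t+1}_0 - x^*}^2] \le \Exp[\norm{x^t_0 - x^*}^2] - 2n\gamma\bigl(F(x^t_0)-F(x^*)\bigr) + (\text{higher-order error terms in } \gamma),
\end{align*}
so the distance term telescopes across epochs rather than contracting.

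The error terms are exactly where Lemma~\ref{pro} and the two norm bounds on $\Delta$ and $R^t$ feed in, but they must be re-derived (or re-read off) with only the gradient-error bound $\norm{e_i(x)}\le\delta$ available in place of strong convexity. I expect $\Exp[A_2^t]=-2\gamma\langle x^t_0-x^*,\Exp[R^t]\rangle$ to split into a piece controlled by $\norm{\Delta}+L_HLD^2+2L_HDG$ (scaling like $\gamma^2 n^2 D$ after using the diameter bound $D$), a piece of order $\gamma^3 n^2 L^2\delta$ coming from the first-order gradient spread, and a residual of order $\gamma^4 n^4 L^4$; the term $A_4^t$ contributes at the same $\gamma^4 n^4 L^4$ order via $\norm{R^t}\le\tfrac{n(n-1)}{2}\gamma GL$. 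Summing the telescoped recursion from $t=1$ to $T/n$, dividing by $2n\gamma\cdot(T/n)=2\gamma T$, and invoking convexity through Jensen on $\bar x=\tfrac{n}{T}\sum_i x^i_0$ gives
\begin{align*}
F(\bar x)-F(x^*) \le \frac{D^2}{2\gamma T} + \Oc\!\left(\gamma n\bigl(\norm{\Delta}+L_HLD^2+2L_HDG\bigr) + \gamma^2 n^2 L^2\delta + \gamma^3 n^3 L^4\right),
\end{align*}
which is the standard convex-SGD shape with one leading error term and two lower-order ones.

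The final step is to optimize the step size. Balancing the $\tfrac{D^2}{\gamma T}$ term against each of the three error terms separately yields the three nontrivial candidates inside the $\min$ defining $\gamma$: balancing against the $\gamma n(\cdots)$ term gives $\gamma\sim\sqrt{D/(Tn(\cdots))}$ and produces the leading $\tfrac{2D\sqrt{nD(\cdots)}}{\sqrt T}$ rate; balancing against $\gamma^2 n^2 L^2\delta$ gives the $(D/(Tn^2L^2\delta))^{1/3}$ candidate and the $(n/T)^{2/3}\delta^{1/3}$ contribution; and balancing against $\gamma^3 n^3 L^4$ gives the $(1/(Tn^3L^4))^{1/4}$ candidate and the $(n/T)^{3/4}$ contribution. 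The fourth candidate $\tfrac{1}{16nL}$ is the stability ceiling ensuring the per-epoch error estimates (in particular the bound on $R^t$ and the contraction-free telescoping) remain valid; I would verify it is the binding constraint only in the small-$T$ regime. The main obstacle I anticipate is re-establishing the analogue of Lemma~\ref{pro} without strong convexity: in the strongly convex proof the dangerous $\norm{x^t_0-x^*}^2$ and $\norm{\nabla F(x^t_0)}^2$ sub-terms of $\Exp[A_2^t]$ were absorbed by the $\mu$-dependent distance term and by the gradient-norm term in~\eqref{bound1}, but here neither sink exists, so I must instead crudely bound those sub-terms using the diameter $D$ and gradient bound $G$ and push them entirely into the $\gamma$-dependent error, which is precisely why the diameter assumption appears throughout the step-size formula and why the leading constant carries $D$ rather than $\mu$.
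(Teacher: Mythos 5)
Your skeleton matches the paper's proof almost exactly --- same one-epoch expansion, plain convexity $\langle x^t_0-x^*,\nabla F(x^t_0)\rangle \ge F(x^t_0)-F(x^*)$ for the progress term, diameter-based handling of the $\Delta$ and $L_H$ pieces (giving the leading constant $D\norm{\Delta}+L_HLD^3+2L_HD^2G$), telescoping plus Jensen on $\bar x$, and the four-way step-size balancing. But there is a genuine gap in your closing paragraph, and it is not a side issue: you assert that in the absence of strong convexity ``neither sink exists'' for the $\norm{\nabla F(x^t_0)}^2$-type sub-terms, and propose to ``crudely bound those sub-terms using the diameter $D$ and gradient bound $G$ and push them entirely into the $\gamma$-dependent error.'' That move proves a strictly weaker theorem than the one stated. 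A sink does exist: for $L$-smooth convex $F$ one has $\norm{\nabla F(x)}^2 \le 2L\bigl(F(x)-F(x^*)\bigr)$, and the paper uses exactly this to absorb \emph{every} $\gamma^2 n^2\norm{\nabla F(x^t_0)}^2$-scale error piece back into the $-2n\gamma\bigl(F(x^t_0)-F(x^*)\bigr)$ progress term; the condition $16n^2\gamma^2 L \le n\gamma$, i.e.\ $\gamma \le \tfrac{1}{16nL}$, is precisely what makes this absorption legal (so that candidate is not merely a ``stability ceiling'' --- it is the absorption budget). If you instead bound $\gamma^2 n^2\norm{\nabla F(x^t_0)}^2 \le \gamma^2 n^2 G^2$, then after summing and dividing by $T\gamma$ you acquire an error term $\gamma n G^2$ at the same order as the leading one; balancing it against $D^2/(T\gamma)$ forces a leading term of order $DG\sqrt{n}/\sqrt{T}$, which destroys the claimed leading constant $2D\sqrt{nD(\norm{\Delta}+L_HLD^2+2L_HDG)}/\sqrt{T}$ (recall $\norm{\Delta}\le \tfrac{LG}{n-1}$, so the claimed constant is $n$-free for quadratics; yours picks up $\sqrt{n}G$ and is worse than \sgd).

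The second missing device is how $\delta$ enters at all. You correctly predict a $\gamma^2 n^2 L^2\delta$-order term in the final recursion, but the crude within-epoch drift bound $\norm{x^t_j-x^t_{j-1}}\le\gamma G$ (the one behind $\norm{R^t}\le \tfrac{n(n-1)}{2}\gamma GL$ and the strongly convex bound on $B^t$) produces $G$, not $\delta$, in that slot: you would get a middle term of order $(n/T)^{2/3}G^{1/3}$ that does \emph{not} vanish when $\delta=0$, contradicting both the statement and the paper's subsequent discussion of the $\delta=0$ regimes. The paper's fix is an inductive estimate, valid under $\gamma\le\tfrac{1}{16nL}$: writing $\nabla f_i(x)=\nabla F(x)-e_i(x)$ with $\norm{e_i(x)}\le\delta$, one shows $\norm{\nabla f_{id}(x^t_i)-\nabla f_{id}(x^t_0)} \le \bigl[\sum_{j=0}^{i-1}(1+L\gamma)^j\bigr]L\gamma\bigl(\norm{\nabla F(x^t_0)}+\delta\bigr) \le 3nL\gamma\bigl(\norm{\nabla F(x^t_0)}+\delta\bigr)$, whence $\norm{R^t}\le 3n^2L\gamma(\norm{\nabla F(x^t_0)}+\delta)$ and $\norm{B^t}\le 3\gamma^2L^2n^3(\norm{\nabla F(x^t_0)}+\delta)$. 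The $\norm{\nabla F(x^t_0)}$ parts of these bounds are then fed back into the smoothness sink above (after Young's inequality), leaving only $\delta$-dependent residue of orders $\gamma^3 n^3 L^2 D\delta$ and $\gamma^4 n^4 L^2\delta^2$, plus a $\gamma^4 n^4 L^4 D^2$ piece. So the two ideas you must add are: (a) absorb all gradient-norm-squared error via $\norm{\nabla F}^2\le 2L(F-F^*)$ under the $\tfrac{1}{16nL}$ ceiling, and (b) replace the $\gamma G$ per-step drift bound by the inductive $(\norm{\nabla F(x^t_0)}+\delta)$-based one; without them your argument yields a correct but materially weaker bound with $G$ in place of $\delta$ and an extra $\sqrt{n}$ in the leading term.
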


We have some discussion of this result:

Firstly, it is interesting to see what happens asymptotically. We can observe three levels of possible asymptotic (ignore $n$) convergence rates for \rsgd from this theorem: (1) In the most general situation, it converges as $\Oc(\nicefrac{1}{\sqrt{T}})$; (2) when the functions are quadratic (i.e., $L_H=0$) and locally the variance vanishes (i.e., $\Delta = 0$), it converges as $\Oc(\nicefrac{1}{T^{2/3}})$; (3) when the functions are quadratic (i.e., $L_H=0$) and globally the variance vanishes (i.e., $\delta = 0$),  it converges as $\Oc(\nicefrac{1}{T^{3/4}})$. 

Secondly, we should notice that there is a known convergence rate of $\Oc(\nicefrac{DG}{\sqrt{T}})$ for \sgd. Also, we can further bound $\norm{\Delta}$ with $\frac{LG}{n-1}$. Therefore, when $D$ is relatively small and quadratic functions (i.e., $L_H=0$), our bound translates into form of $\Oc(\nicefrac{1}{\sqrt{T}} + \nicefrac{n^{2/3}}{T^{2/3}})$, with constant in front of $\nicefrac{1}{\sqrt{T}}$ possibly smaller than \sgd by constant in certain parameter space. 

One obvious limitation of this result is: when globally there is no variance of gradients, it fails to recover the $\Oc(\nicefrac{1}{T})$ rate of \gd. This indicates the possibility of tighter bounds using more involved analysis. We leave this possibility (either improving upon the $\nicefrac{1}{\sqrt{T}}$ dependence on $T$ under existence of noise, or recovering $\nicefrac{1}{T}$ when there is no noise) as an open question.

\subsection{Vanishing variance}
Our previous results show that \rsgd converges faster than \sgd after a certain number of epochs. However, one may want to see whether it is possible to show faster convergence of \rsgd after only one epoch, or even within one epoch. In this section, we study a specialized class of strongly convex problems where \rsgd has faster convergence rate than \sgd after an \textbf{arbitrary} number of iterations. 

We build our example based on a vanishing variance setting: $\nabla f_i(x^*) = \nabla F(x^*)$ for the optimal point $x^*$. \citet{moulines2011non} show that when $F(x)$ is strongly convex, \sgd converges linearly in this setting. For the construction of our example, we assume a slightly stronger situation: each component function $f_i(x)$ is strongly convex. 

Given $n$ pairs of positive numbers $(\mu_1, L_1), \cdots, (\mu_n, L_n)$ such that $\mu_i \leq L_i$, a dimension $d$ and a point $x^*\in \mathbb{R}^d$, we define a valid problem as a $d$ dimensional finite sum function $F(x) = \sum_{i=1}^n f_i(x)$ where each component $f_i(x)$ is $\mu_i$ strongly convex and has $L_i$ Lipschitz continuous gradient, with some $x^*$ minimizing all functions at the same time (which is equivalent to vanishing gradient). Let $\mathcal{P}$ be the set of all such  problems, called \emph{valid problems} below. For a problem $P\in \mathcal{P}$, let random variable $X_{RS}(T, x_0, \gamma, P)$ be the result of running \rsgd from initial point $x_0$ for $T$ iterations with step size $\gamma$ on problem $P$. Similarly, let $X_{SGD}(T, x_0, \gamma, P)$ be the result of running \sgd from initial point $x_0$ for $T$ iterations with step size $\gamma$ on problem $P$.

We have the following result on the worst-case convergence rate of \rsgd and \sgd:
\begin{theorem}\label{last}
Given $n$ pairs of positive numbers $(\mu_1, L_1), \cdots, (\mu_n, L_n)$ such that $\mu_i \leq L_i$,  a dimension $d$, a point $x^*\in \mathbb{R}^d$ and an initial set $D_R(x^*) = \{x\in \mathbb{R}^d: \norm{x - x^*}_2\leq R\}$. Let $\mathcal{P}$ be the set of valid problems. For step size $\eta\leq\min\limits_i\{\frac{2}{L_i+\mu_i}\}$ and any $T\geq 1$, there is
\[\max\limits_{P\in \mathcal{P}, x_0 \in D_R(x^*)} \Exp[\norm{X_{RS}(T, x_0, \gamma, P) - x^*}^2] \leq \max\limits_{P\in \mathcal{P}, x_0 \in D_R(x^*)}\Exp[ \norm{X_{SGD}(T, x_0, \gamma, P)-x^*}^2].\]
\end{theorem}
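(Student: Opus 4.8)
The plan is to sidestep any same-problem comparison (which would require a matrix AM--GM inequality in the spirit of the Recht--R\'e conjecture) and instead sandwich the two worst cases between a common quantity $U$, using possibly different problems on each side. Concretely, I will show
\[
\max_{P,x_0}\Exp[\norm{X_{RS}(T,x_0,\gamma,P)-x^*}^2]\ \le\ U\ \le\ \max_{P,x_0}\Exp[\norm{X_{SGD}(T,x_0,\gamma,P)-x^*}^2],
\]
where $U=\bigl(\tfrac1n\sum_{i}c_i^2\bigr)^T R^2$ and $c_i:=1-\gamma\mu_i$.

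The first ingredient is a sharp per-step contraction lemma: for any valid component $f_i$ (which is $\mu_i$-strongly convex, $L_i$-smooth, and satisfies $\nabla f_i(x^*)=0$) and any $x$, the single gradient step obeys $\norm{x-\gamma\nabla f_i(x)-x^*}\le(1-\gamma\mu_i)\norm{x-x^*}$ whenever $\gamma\le\tfrac{2}{\mu_i+L_i}$. I would prove this by expanding $\norm{x-x^*-\gamma\nabla f_i(x)}^2$ and lower-bounding the cross term via the refined strong-convexity/smoothness inequality (Theorem 2.1.12 in \citep{nesterov2013introductory}), $\langle\nabla f_i(x),x-x^*\rangle\ge\tfrac{\mu_iL_i}{\mu_i+L_i}\norm{x-x^*}^2+\tfrac1{\mu_i+L_i}\norm{\nabla f_i(x)}^2$; the resulting expression is nonincreasing in $\norm{\nabla f_i(x)}$ for $\gamma\le\tfrac{2}{\mu_i+L_i}$, so its maximum over admissible configurations occurs at $\norm{\nabla f_i(x)}=\mu_i\norm{x-x^*}$ and equals $(1-\gamma\mu_i)^2$. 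Obtaining this \emph{sharp} factor is essential, since the same constant $c_i$ must drive both sides; a looser factor (e.g.\ the plain co-coercivity bound) would be strictly larger and break the comparison. The hypothesis $\gamma\le\min_i\tfrac{2}{\mu_i+L_i}$ makes this applicable to every $i$, with $c_i\in[0,1)$.

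Next I will bound \rsgd from above for an arbitrary valid $P$ and $x_0\in D_R(x^*)$. Writing $T=ln+m$ with $0\le m<n$, I note that over any completed epoch the product of per-step factors is $\prod_{i=1}^n c_{\sigma_t(i)}=\prod_{i=1}^n c_i$, which is permutation-independent; hence $\norm{x^{l+1}_0-x^*}\le(\prod_i c_i)^l R$ holds \emph{deterministically}. For the final partial epoch the $m$ visited indices are a uniformly random ordered $m$-tuple of distinct elements, so taking expectations over that fresh permutation gives $\Exp[\norm{X_{RS}-x^*}^2]\le(\prod_i c_i^2)^l\,S_m(c_1^2,\dots,c_n^2)\,R^2$, where $S_m=e_m/\binom{n}{m}$ is the $m$-th elementary symmetric mean. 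Applying AM--GM, $\prod_i c_i^2\le(\tfrac1n\sum_i c_i^2)^n$, together with Maclaurin's inequality, $S_m(c_1^2,\dots,c_n^2)\le(\tfrac1n\sum_i c_i^2)^m$, collapses this to $\Exp[\norm{X_{RS}-x^*}^2]\le(\tfrac1n\sum_i c_i^2)^T R^2=U$, uniformly over $P$ and $x_0$.

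Finally I will lower-bound the \sgd worst case by exhibiting a single valid problem attaining $U$: take the scalar quadratics $f_i(x)=\tfrac{\mu_i}{2}\norm{x-x^*}^2$ (each is $\mu_i$-strongly convex and $\mu_i$-smooth, hence $L_i$-smooth, with common minimizer $x^*$) and any $x_0$ with $\norm{x_0-x^*}=R$. Here each \sgd step multiplies the error exactly by $1-\gamma\mu_{s(k)}$, and independence of the samples gives $\Exp[\norm{X_{SGD}-x^*}^2]=\prod_{k=1}^T\Exp[(1-\gamma\mu_{s(k)})^2]\,R^2=(\tfrac1n\sum_i c_i^2)^T R^2=U$, so the \sgd maximum is at least $U$. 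Chaining the two bounds proves the theorem. I expect the crux to be the partial-epoch step: expressing the without-replacement product through the symmetric mean $S_m$ and controlling it by Maclaurin's inequality is precisely what forces the shuffling side to be no larger than the with-replacement side, whereas the full-epoch order-invariance of $\prod_i c_i$ plus AM--GM disposes of the bulk of the horizon.
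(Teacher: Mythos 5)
Your proposal is correct and takes essentially the same approach as the paper's proof: the sharp per-step contraction $\norm{x-\gamma\nabla f_i(x)-x^*}\le(1-\gamma\mu_i)\norm{x-x^*}$ obtained from Nesterov's refined strong-convexity/smoothness inequality, an AM--GM-type comparison of the expected scalar products $\Exp\bigl[\prod_{t=1}^T(1-\gamma\mu_{s(t)})^2\bigr]$ under without- versus with-replacement sampling, and attainment of the bound on the \sgd side via the instance $f_i(x)=\frac{\mu_i}{2}\norm{x-x^*}^2$. Your only departure is expository rather than substantive: you spell out, via full-epoch permutation-invariance of $\prod_i(1-\gamma\mu_i)$ together with Maclaurin's inequality for the partial epoch, the step the paper dispatches in a single sentence (``by the AM--GM inequality\ldots''), which makes the case $T$ not a multiple of $n$ fully rigorous but does not constitute a different argument.
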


This theorem indicates that \rsgd has a better worst-case convergence rate than \sgd after an arbitrary number of iterations under this noted setting.

\section{Conclusion and open problems}
A long-standing problem in the theory of stochastic gradient descent (\sgd) is to prove that \rsgd converges faster than the usual with-replacement \sgd. In this paper, we provide the first non-asymptotic convergence rate analysis for \rsgd. We show in particular that after $\mathcal{O}(\sqrt{n})$ epochs, \rsgd behaves strictly better than \sgd under strong convexity and second-order differentiability. The underlying introduction of dependence on $n$ into the bound plays an important role toward a better dependence on $T$. We further improve the dependence on $n$ for sparse data settings, showing \rsgd's advantage in such situations. 

An important open problem remains: how (and to what extent) can we improve the bound such that \rsgd can be shown to be better than \sgd for smaller $T$. A possible direction is to improve the $n$ dependence arising in our bounds, though different analysis techniques may be required. It is worth noting that for some special settings, this improvement can be achieved. (For example in the setting of Theorem~\ref{last}, \rsgd is shown better than \sgd for any number of iterations.) However, showing \rsgd converges better in general, remains open.


\bibliographystyle{abbrvnat}
\setlength{\bibsep}{3pt}
\bibliography{refer}

\newpage

\normalsize
\appendix
\allowdisplaybreaks
\section{Proof of Theorem \ref{thm41}} \label{th1}
\begin{proof}
Assume $T = nl$ where $l$ is positive integer. Notate $x^t_i$ as the $i$th iteration for $t$th epoch. There is $x_0^1 = x_0$, $x^t_n = x^{t+1}_0$, $x^l_n = x_T$. Assume the permutation used in $t$th epoch is $\sigma_t\left(\cdot\right)$. Define error term \[R^t = \sum_{i=1}^n \nabla f_{\sigma_t\left(i\right)} \left(x^t_{i-1}\right) - \sum_{i=1}^n \nabla f_{\sigma_t\left(i\right)}\left(x^t_0\right).\]

For one epoch of \rsgd, We have the following inequality
\begin{align}
\norm{x^{t}_n - x^*}^2 &= \norm{x^t_0 - x^*}^2 - 2\gamma \left\langle x^t_0 - x^*, \sum_{i=1}^n \nabla f_{\sigma_t\left(i\right)} \left(x^t_{i-1}\right) \right\rangle + \gamma^2 \norm{\sum_{i=1}^n \nabla f_{\sigma_t\left(i\right)} \left(x^t_{i-1}\right)}^2\nonumber\\
&= \norm{x^t_0 - x^*}^2 - 2\gamma\left\langle x^t_0 - x^*, n\nabla F\left(x^t_0\right) \right\rangle - 2\gamma\left\langle x^t_0 - x^*, R^t\right\rangle + \gamma^2 \norm{n\nabla F\left(x^t_0\right) + R^t}^2 \nonumber\\
&\leq \norm{x^t_0-x^*}^2 - 2n\gamma\left[\frac{L\mu}{L+\mu}\norm{x^t_0 - x^*}^2 + \frac{1}{L+\mu} \norm{\nabla F\left(x^t_0\right)}^2\right] \nonumber\\
&\ \ \ \ \ \ \ \ -2\gamma\left\langle x^t_0 - x^*, R^t \right\rangle  + 2\gamma^2 n^2 \norm{\nabla F\left(x^t_0\right)}^2 + 2\gamma^2 \norm{R^t}^2\nonumber\\
&= \left(1-2n\gamma\frac{L\mu}{L+\mu}\right)\norm{x^t_0 - x^*}^2 -\left(2n\gamma\frac{1}{L+\mu}-2\gamma^2n^2\right) \norm{\nabla F\left(x^t_0\right)}^2\nonumber \\
&\ \ \ \ \ \ \ \ - 2\gamma\left\langle x^t_0 - x^*, R^t \right\rangle + 2\gamma^2 \norm{R^t}^2, \label{a1}
\end{align}
where the inequality is due to Theorem 2.1.11 in~\cite{nesterov2013introductory}.

Take the expectation of~\eqref{a1} over randomness of permutation $\sigma_t\left(\cdot\right)$, we have \begin{align}\Exp\left[\norm{x^t_n - x^*}^2\right] &\leq \left(1-2n\gamma\frac{L\mu}{L+\mu}\right)\norm{x^t_0 - x^*}^2 -\left(2n\gamma\frac{1}{L+\mu}-2n^2\gamma^2\right) \norm{\nabla F\left(x^t_0\right)}^2 \nonumber\\
&\ \ \ \ \ \ \ \ -2\gamma \left\langle x^t_0 -x^*, \Exp\left[R^t\right] \right\rangle + 2\gamma^2\Exp\left[\norm{R^t}^2\right]. \label{a2} \end{align}

What remains to be done is to bound the two terms with $R^t$ dependence. Firstly, we give a bound on the norm of $R^t$:
\begin{align}
\norm{R^t} &= \norm{\sum_{i=1}^n \nabla f_{\sigma_t\left(i\right)} \left(x^t_{i-1}\right) - \sum_{i=1}^n \nabla f_{\sigma_t\left(i\right)}\left(x^t_0\right)}\nonumber\\
&\leq \sum_{i=1}^n \norm{\nabla f_{\sigma_t\left(i\right)} \left(x^t_{i-1}\right) - \nabla f_{\sigma_t\left(i\right)}\left(x^t_0\right)}\nonumber\\
&= \sum_{i=1}^n \norm{\sum_{j=1}^{i-1} \left(\nabla f_{\sigma_t\left(i\right)} \left(x^t_{j}\right) - \nabla f_{\sigma_t\left(i\right)} \left(x^t_{j-1}\right)\right) }\nonumber\\
&\leq \sum_{i=1}^n \sum_{j=1}^{i-1} \norm{ \nabla f_{\sigma_t\left(i\right)} \left(x^t_{j}\right) - \nabla f_{\sigma_t\left(i\right)} \left(x^t_{j-1}\right) }\nonumber\\
&\leq \sum_{i=1}^n \sum_{j=1}^{i-1} L \norm{x^t_{j} - x^t_{j-1}}\nonumber\\
&= \sum_{i=1}^n \sum_{j=1}^{i-1} L \norm{-\gamma \nabla f_{\sigma_t\left(j\right)}\left(x^t_{j-1}\right)}\nonumber\\
&\leq \sum_{i=1}^n \sum_{j=1}^{i-1} L\gamma G\nonumber\\
&= \frac{n\left(n-1\right)}{2}\gamma GL,\nonumber
\end{align}
where the first and second inequality is by triangle inequality of vector norm, the third inequality is by definition of $L$, the fourth inequality is by definition of $G$. By this result, we have 
\begin{align}
\Exp\left[\norm{R^t}^2\right] \leq \frac{n^4}{4}\gamma^2 G^2 L^2. \label{a3}
\end{align}

For the $\Exp\left[R^t\right]$ term, we need more careful bound. Since the Hessian is constant for quadratic functions, we use $H_i$ to denote the Hessian matrix of function $f_i(\cdot)$. We begin with the following decomposition:
\begin{align}
R^t &= \sum_{i=1}^n \left[\nabla f_{\sigma_t\left(i\right)} \left(x^t_{i-1}\right) - \nabla f_{\sigma_t\left(i\right)}\left(x^t_{0}\right)\right] \nonumber\\
&= \sum_{i=1}^n \left[H_{\sigma_t\left(i\right)}\left(x^t_{i-1} - x^t_0\right)\right] \nonumber\\
&= \sum_{i=1}^n \left\{H_{\sigma_t\left(i\right)} \sum_{j=1}^{i-1}\left[-\gamma \nabla f_{\sigma_t\left(j\right)} \left(x^t_{j-1}\right)\right]\right\} \nonumber\\
&= \sum_{i=1}^n \left\{-\gamma H_{\sigma_t\left(i\right)} \sum_{j=1}^{i-1}\left[\nabla f_{\sigma_t\left(j\right)} \left(x^t_0\right) + \left(\nabla f_{\sigma_t\left(j\right)} \left(x^t_{j-1}\right)-\nabla f_{\sigma_t\left(j\right)} \left(x^t_0\right)\right)\right]\right\} \nonumber\\
&= -\gamma \sum_{i=1}^n \left[H_{\sigma_t\left(i\right)}\sum_{j=1}^{i-1} \nabla f_{\sigma_t\left(j\right)} \left(x^t_0\right)\right] - \gamma \sum_{i=1}^n \left\{H_{\sigma_t\left(i\right)}\sum_{j=1}^{i-1}\left[\nabla f_{\sigma_t\left(j\right)} \left(x^t_{j-1}\right)-\nabla f_{\sigma_t\left(j\right)} \left(x^t_0\right)\right]\right\} \nonumber\\
&= A^t + B^t. \label{a18}
\end{align}
Here we define random variables \[A^t = -\gamma \sum_{i=1}^n \left[H_{\sigma_t\left(i\right)}\sum_{j=1}^{i-1} \nabla f_{\sigma_t\left(j\right)} \left(x^t_0\right)\right],\]
\[B^t = - \gamma \sum_{i=1}^n \left\{H_{\sigma_t\left(i\right)}\sum_{j=1}^{i-1}\left[\nabla f_{\sigma_t\left(j\right)} \left(x^t_{j-1}\right)-\nabla f_{\sigma_t\left(j\right)} \left(x^t_0\right)\right]\right\}.\]
There is 
\begin{align}\Exp\left[A^t\right] = -\frac{n\left(n-1\right)}{2} \gamma \Exp\limits_{i\neq j}\left[ H_{i} \nabla f_{j}\left(x^t_0\right)\right],\label{a4}\end{align}
\begin{align}
\norm{B^t} &\leq \gamma \sum_{i=1}^n H_{\sigma_t\left(i\right)} \sum_{j=1}^{i-1} \norm{\nabla f_{\sigma_t\left(j\right)} \left(x^t_{j-1}\right) - \nabla f_{\sigma_t\left(j\right)} \left(x^t_0\right)}\nonumber\\
&\leq \gamma \sum_{i=1}^n L \sum_{j=1}^{i-1} \left(j-1\right)\gamma GL\nonumber\\
&= \gamma^2 L^2 G \sum_{i=1}^n \frac{\left(i-1\right)\left(i-2\right)}{2}\nonumber\\
&\leq \frac{1}{2} \gamma^2L^2Gn^3.\label{a5}
\end{align}
Using~\eqref{a18} and ~\eqref{a4}, we can decompose the inner product of $x^t_0 - x^*$ and $\Exp\left[R^t\right]$ into:
\begin{align}
-2\gamma \left\langle x^t_0 - x^*, \Exp\left[R^t\right] \right\rangle &= -2\gamma \left\langle x^t_0 - x^*, \Exp\left[A^t\right] + \Exp\left[B^t\right] \right\rangle\nonumber\\
&= -2\gamma \left\langle x^t_0 - x^*, \Exp\left[A^t\right] \right\rangle - 2\gamma \left\langle x^t_0 - x^*, \Exp\left[B^t\right] \right\rangle\nonumber\\
&= \gamma^2 n\left(n-1\right) \left\langle x^t_0 - x^*, \Exp\limits_{i\neq j} H_{i} \nabla f_{j}\left(x^t_0\right) \right\rangle -2\gamma \left\langle x^t_0 - x^*, \Exp\left[B^t\right] \right\rangle. \label{a6}
\end{align}

For the first term in~\eqref{a6}, there is
\begin{align}
&\ \ \ \ \gamma^2 n\left(n-1\right) \left\langle x^t_0 - x^*, \Exp\limits_{i\neq j} H_i \nabla f_j\left(x^t_0\right)\right\rangle\nonumber\\
&= \gamma^2 n\left(n-1\right) \left\langle x^t_0 - x^*, \Exp\limits_{i\neq j} H_i \left[\nabla f_j\left(x^t_0\right) - \nabla f_j\left(x^*\right)\right]\right\rangle + \gamma^2 n\left(n-1\right) \left\langle x^t_0 - x^*, \Exp\limits_{i\neq j} H_i \nabla f_j\left(x^*\right)\right\rangle\nonumber\\
&\leq \gamma^2n^2 \left\langle x^t_0 - x^*, \Exp\limits_{i, j}H_iH_j\left(x^t_0-x^*\right)\right\rangle + \gamma^2n\left(n-1\right)\left[\frac{\lambda_1}{2}\norm{x^t_0 - x^*}^2 + \frac{1}{2\lambda_1} \norm{\Delta}^2\right]\nonumber\\
&\leq \gamma^2n^2 \norm{\nabla F\left(x^t_0\right)}^2 + \frac{1}{4}\gamma\mu\left(n-1\right)\norm{x^t_0-x^*}^2 + \gamma^{3}\mu^{-1} n^{2}\left(n-1\right)\norm{\Delta}^2. \label{a7}
\end{align}
Here we introduce variable $\Delta = \Exp_{i\neq j} \left[H_i \nabla f_j(x^*)\right]$ for simplicity of notation, with $i, j$ uniformly sampled from all pairs of different indices. The first inequality is by $\left\langle x^t_0 - x^*, H_iH_i\left(x^t_0 - x^*\right)\right\rangle\geq 0$ and AM–GM inequality, where $\lambda_1$ is any positive number. The second inequality comes from noticing that $\Exp\limits_{i, j}H_iH_j = H^2$ (with $i, j$ uniformly sampled from all pairs of indices), and let $\lambda_1 = \frac{1}{2}\mu\gamma^{-1} n^{-1}$. 

For the second term in~\eqref{a6}, we use the bound
\begin{align}
-2\gamma \left\langle x^t_0 - x^*, \Exp\left[B^t\right]\right\rangle &\leq 2\gamma\left[\frac{\lambda_2}{2}\norm{x^t_0- x^*}^2 + \frac{1}{2\lambda_2}\norm{\Exp\left[B^t\right]}^2\right]. \label{a8}
\end{align}
Set $\lambda_2 = \frac{1}{4}\mu\left(n-1\right)$ in~\eqref{a8} and using~\eqref{a5}, there is 
\begin{align}
-2\gamma \left\langle x^t_0 - x^*, \Exp\left[B^t\right]\right\rangle &\leq \frac{1}{4}\gamma\mu\left(n-1\right)\norm{x^t_0- x^*}^2 + 4\gamma\mu^{-1}\left(n-1\right)^{-1}\norm{\Exp\left[B^t\right]}^2\nonumber\\
&\leq \frac{1}{4}\gamma\mu\left(n-1\right)\norm{x^t_0- x^*}^2 + \mu^{-1}\left(n-1\right)^{-1} \gamma^5L^4G^2n^6\nonumber\\
&\leq  \frac{1}{4}\gamma\mu\left(n-1\right)\norm{x^t_0- x^*}^2 + 2\mu^{-1}\gamma^5L^4G^2n^5. \label{a9}
\end{align}
Substituting~\eqref{a7} and~\eqref{a9} back to~\eqref{a6}, we get 
\begin{align}
-2\gamma \left\langle x^t_0 - x^*, \Exp\left[R^t\right] \right\rangle &\leq  \gamma^2n^2 \norm{\nabla F\left(x^t_0\right)}^2 + \frac{1}{2}\gamma\mu\left(n-1\right)\norm{x^t_0-x^*}^2 \nonumber\\
&\ \ \ \ \ \ \ \ \ \ \ \ \ \ \ \ + \gamma^{3} \mu^{-1}n^{2}\left(n-1\right)\norm{\Delta}^2 + 2\mu^{-1}\gamma^5L^4G^2n^5. \label{a10}
\end{align}
The next step requires to bound the $\norm{\Delta}$ term. Toward this end, we use the following important fact:
\begin{align}
\norm{\Delta} &= \norm{\Exp\limits_{i\neq j} H_i \nabla f_j\left(x^*\right)}\nonumber\\
&= \norm{\frac{1}{n\left(n-1\right)} \sum_{i\neq j} H_i \nabla f_j\left(x^*\right)}\nonumber\\
&= \norm{\frac{-1}{n\left(n-1\right)}\sum_{i}H_i\nabla f_i\left(x^*\right)}\nonumber\\
&= \frac{1}{n-1} \norm{\Exp_i\left[H_i \nabla f_i\left(x^*\right)\right]}\nonumber\\
&\leq \frac{1}{n-1} LG. \label{a11}
\end{align}
This fact captures the importance of randomly drawing a permutation instead of using a fixed one. Substituting~\eqref{a3}~\eqref{a10} back to~\eqref{a2} and using~\eqref{a11} , we finally get a recursion bound for one epoch:
\begin{align}
&\ \ \ \ \Exp\norm{x^t_n - x^*}^2  \nonumber\\
&\leq \left(1-2n\gamma\frac{L\mu}{L+\mu} +\frac{1}{2}\gamma\mu\left(n-1\right)\right)\norm{x^t_0 - x^*}^2-\left(2n\gamma\frac{1}{L+\mu}-3\gamma^2n^2\right) \norm{\nabla F\left(x^t_0\right)}^2 \nonumber\\
&\ \ \ \ \ \ \ \ + \gamma^{3} \mu^{-1}n^{2}\left(n-1\right)\norm{\Delta}^2 + 2\mu^{-1}\gamma^5L^4G^2n^5+ \frac{1}{2}n^4\gamma^4G^2L^2 \nonumber\\
&\leq \left(1-2n\gamma\frac{L\mu}{L+\mu} +\frac{1}{2}\gamma\mu\left(n-1\right)\right)\norm{x^t_0 - x^*}^2-\left(2n\gamma\frac{1}{L+\mu}-3\gamma^2n^2\right) \norm{\nabla F\left(x^t_0\right)}^2  \nonumber\\
&\ \ \ \ \ \ \ \ +2\gamma^{3} \mu^{-1}nL^2G^2 + 2\mu^{-1}\gamma^5L^4G^2n^5+ \frac{1}{2}n^4\gamma^4G^2L^2\label{a12}
\end{align}

Now assume \[n\gamma\frac{L\mu}{L+\mu}>\frac{1}{2}\gamma\mu\left(n-1\right),\] and \[2n\gamma\frac{1}{L+\mu}-3\gamma^2n^2>0,\] which we call assumption $1$ and assumption $2$, \eqref{a12} can be further turned into: 
\begin{align}
\Exp\left[\norm{x^t_n - x^*}^2\right] \leq \left(1-n\gamma\frac{L\mu}{L+\mu}\right) \norm{x^t_0 - x^*}^2 + \gamma^3nC_1 + \gamma^{5}n^{5}C_2 + \gamma^4n^4C_3, \label{a13}
\end{align}
where $C_1 = 2\mu^{-1}L^2G^2$, $C_2 = 2\mu^{-1}L^4G^2$, $C_3 = \frac{1}{2}G^2L^2$.
Now assume $n\gamma\frac{L\mu}{L+\mu}<1$, which we call assumption $3$. Expanding~\eqref{a13} over all epochs leads to a final bound of \rsgd:
\begin{align}
\Exp\left[\norm{x_T - x^*}^2\right] \leq \left(1-n\gamma\frac{L\mu}{L+\mu}\right)^{\frac{T}{n}}\norm{x_0 - x^*}^2 + \frac{T}{n} \left(\gamma^3nC_1 + \gamma^{5}n^{5}C_2 + \gamma^4n^4C_3\right). \label{a14}
\end{align}
Not substituting $\gamma = \frac{4\log T }{T\mu}$ into~\eqref{a14}, we have:
\begin{align}
\Exp\left[\norm{x_T - x^*}^2\right] &\leq \left(1-\frac{2n\log T}{T}\right)^{\frac{T}{2n\log T} 2\log T}\norm{x_0 - x^*}^2 + \frac{T}{n} \left(\gamma^3nC_1 + \gamma^{5}n^{5}C_2 + \gamma^4n^4C_3\right) \nonumber\\
&\leq \frac{1}{T^2}\norm{x_0-x^*}^2 + \frac{1}{T^2}\left(\log T\right)^3 C_4 + \frac{n^3}{T^3}\left(\log T\right)^4 C_5 + \frac{n^4}{T^4}\left(\log T\right)^5 C_6, \label{a15}
\end{align}
where $C_4 = \frac{64C_1}{\mu^3}$, $C_5 = \frac{256C_3}{\mu^{4}}$, $C_6 = \frac{1024C_2}{\mu^{5}}$. The first inequality uses the fact that \[ n\frac{4\log T }{T\mu} \frac{L\mu}{L+\mu} \geq \frac{2n\log T}{T}.\] The second inequality comes from $\left(1-x\right)^{\frac{1}{x}} \leq \frac{1}{e}$ for $0<x<1$.
Obviously, \eqref{a15} is a result of the form $\mathcal{O}\left(\frac{1}{T^2} + \frac{n^3}{T^3}\right)$. Or in the expanding version with constant dependence, we have
\begin{align}
\Exp\left[\norm{x_T - x^*}^2\right] \leq \frac{\left(\log T\right)^2}{T^2}\left(D^2 + 128\frac{L^2G^2}{\mu^4}\right) + \frac{n^3\left(\log T\right)^4}{T^3}128\frac{L^2G^2}{\mu^4} + \frac{n^4\left(\log T\right)^5}{T^4}2048\frac{L^4G^2}{\mu^6}. \label{a16}
\end{align}

What remains to determine is to satisfy the three assumptions: (1) $n\gamma\frac{L\mu}{L+\mu}>\frac{1}{2}\gamma\mu\left(n-1\right)$, (2) $2n\gamma\frac{1}{L+\mu}-3\gamma^2n^2>0$, and (3) $n\gamma\frac{L\mu}{L+\mu}<1$. 
The first is naturally satisfied since $\frac{L}{L+\mu} \geq \frac{1}{2}$ and $n>n-1$. The second assumption is equivalent to \[\frac{T}{\log T} > 6\left(1+\frac{L}{\mu}\right)n.\] Assumption $3$ is equivalent to \[\frac{T}{\log T} > \frac{4L}{L+\mu} n,\] which is obviously satisfied when \[\frac{T}{\log T} > 4 n.\] So we only need \[\frac{T}{\log T} > 6\left(1+\frac{L}{\mu}\right)n.\] So whenever $\frac{T}{\log T} > 6\left(1+\frac{L}{\mu}\right)n$, the three assumptions hold. Therefore the theorem is proved.
\end{proof}

\section{Proof of Theorem \ref{thm43}} \label{generalproof}
\begin{proof}
The idea is similar to the proof of theorem~\ref{thm41}, with a slightly different analysis on the $R^t$ term capturing the changing Hessian. For any $i$, we use $H_i$ to denote $H_i\left(x^*\right)$. For any vector $v$ not being zero, define vector value directional function \[dir\left(v\right) = \frac{v}{\norm{v}},\] with norm being $\ell_2$ norm. For the convenience of notation, we define $dir\left(\vec{0}\right) = \vec{0}$, where $\vec{0}$ is the zero vector. For any two points $a,b\in \mathbb{R}^d$, and a matrix function $g\left(\cdot\right): \mathbb{R}^d\rightarrow \mathbb{R}^{d\times d}$, define line integral:

\[\int_{a}^b g\left(x\right) dx := \int_0^{\norm{b-a}} g\left(a+t\frac{b-a}{\norm{b-a}}\right) dir\left(b-a\right) dt,\] where the integral on the right hand side is integral of vector valued function over real number interval. This integral represents integrating the matrix values function along the line from $a$ to $b$. Again, define error term \[R^t = \sum_{i=1}^n \nabla f_{\sigma_t\left(i\right)} \left(x^t_{i-1}\right) - \sum_{i=1}^n \nabla f_{\sigma_t\left(i\right)}\left(x^t_0\right).\]

We have the following decomposition for the error term:
\begin{align}
R^t &= \sum_{i=1}^n \left[\nabla f_{\sigma_t\left(i\right)} \left(x^t_{i-1}\right) - \nabla f_{\sigma_t\left(i\right)}\left(x^t_{0}\right)\right]\nonumber\\
&= \sum_{i=1}^n \left[\int_{x^t_0}^{x^t_{i-1}} H_{\sigma_t\left(i\right)}\left(x\right) dx\right]\nonumber\\
&= \sum_{i=1}^n \left[\int_{x^t_0}^{x^t_{i-1}} H_{\sigma_t\left(i\right)} dx\right] +  \sum_{i=1}^n \left[\int_{x^t_0}^{x^t_{i-1}} \left(H_{\sigma_t\left(i\right)}\left(x\right)-H_{\sigma_t\left(i\right)}\right) dx\right]\nonumber\\
&=  \sum_{i=1}^n\left[H_{\sigma_t\left(i\right)} \left(x^t_{i-1} - x^t_0\right)\right] +  \sum_{i=1}^n \left[\int_{x^t_0}^{x^t_{i-1}} \left(H_{\sigma_t\left(i\right)}\left(x\right)-H_{\sigma_t\left(i\right)}\right) dx\right]\nonumber\\
&= \sum_{i=1}^n\left[H_{\sigma_t\left(i\right)} \sum_{j=1}^{i-1}\left(-\gamma \nabla f_{\sigma_t\left(j\right)}\left(x^t_{j-1}\right)\right)\right] +  \sum_{i=1}^n \left[\int_{x^t_0}^{x^t_{i-1}} \left(H_{\sigma_t\left(i\right)}\left(x\right)-H_{\sigma_t\left(i\right)}\right) dx\right]\nonumber\\
&= -\gamma \sum_{i=1}^n \left[H_{\sigma_t\left(i\right)}\sum_{j=1}^{i-1} \nabla f_{\sigma_t\left(j\right)} \left(x^t_0\right)\right] - \gamma \sum_{i=1}^n \left\{H_{\sigma_t\left(i\right)}\sum_{j=1}^{i-1}\left[\nabla f_{\sigma_t\left(j\right)} \left(x^t_{j-1}\right)-\nabla f_{\sigma_t\left(j\right)} \left(x^t_0\right)\right]\right\} \nonumber\\
&\ \ \ \ + \sum_{i=1}^n \left[\int_{x^t_0}^{x^t_{i-1}} \left(H_{\sigma_t\left(i\right)}\left(x\right)-H_{\sigma_t\left(i\right)}\right) dx\right]\nonumber \\
&= A^t + B^t + C^t.\label{b1}
\end{align}
Here we define random variables \[A^t = -\gamma \sum_{i=1}^n \left[H_{\sigma_t\left(i\right)}\sum_{j=1}^{i-1} \nabla f_{\sigma_t\left(j\right)} \left(x^t_0\right)\right],\]
\[B^t = - \gamma \sum_{i=1}^n \left\{H_{\sigma_t\left(i\right)}\sum_{j=1}^{i-1}\left[\nabla f_{\sigma_t\left(j\right)} \left(x^t_{j-1}\right)-\nabla f_{\sigma_t\left(j\right)} \left(x^t_0\right)\right]\right\},\]
\[C^t = \sum_{i=1}^n \left[\int_{x^t_0}^{x^t_{i-1}} \left(H_{\sigma_t\left(i\right)}\left(x\right)-H_{\sigma_t\left(i\right)}\right) dx\right].\]
Compared with quadratic case, $C^t$ is the new term capturing the difference introduced by a changing Hessian. There is 
\begin{align}
\Exp\left[A^t\right] = -\frac{n\left(n-1\right)}{2} \gamma \Exp\limits_{i\neq j}\left[ H_{i} \nabla f_{j}\left(x^t_0\right)\right], \label{b2}
\end{align}
\begin{align}
\norm{B^t} &\leq \gamma \sum_{i=1}^n H_{\sigma_t\left(i\right)} \sum_{j=1}^{i-1} \left(\nabla f_{\sigma_t\left(j\right)} \left(x^t_{j-1}\right) - \nabla f_{\sigma_t\left(j\right)} \left(x^t_0\right)\right)\nonumber\\
&\leq \gamma \sum_{i=1}^n L \sum_{j=1}^{i-1} \left(j-1\right)\gamma GL\nonumber\\
&= \gamma^2 L^2 G \sum_{i=1}^n \frac{\left(i-1\right)\left(i-2\right)}{2}\nonumber\\
&\leq \frac{1}{2} \gamma^2L^2Gn^3. \label{b3}
\end{align}
\begin{align}
\norm{C^t} &\leq \sum_{i=1}^n \left[\int_{0}^{\norm{x^t_{i-1}-x^t_0}} \norm{H_{\sigma_t\left(i\right)}\left(x^t_0+t\frac{x^t_{i-1}-x^t_0}{\norm{x^t_{i-1}-x^t_0}}\right)-H_{\sigma_t\left(i\right)}}dt\right]\nonumber\\
&\leq \sum_{i=1}^n \left[L_H\max\left\{\norm{x^t_{i-1}-x^*}, \norm{x^t_{0}-x^*}\right\} \norm{x^t_{i-1} - x^t_0}\right]\nonumber\\
&\leq n\left[\left(\norm{x^t_0-x^*} + n\gamma G\right)L_H n\gamma G\right]\nonumber\\
&= n^2\gamma L_H G \norm{x^t_0 - x^*} + n^3\gamma^2 L_H G^2. \label{b4}
\end{align}
Using~\eqref{b1}~\eqref{b2}, we can decompose the innerproduct of $x^t_0 - x^*$ and $\Exp\left[R^t\right]$ as following:
\begin{align}
-2\gamma \left\langle x^t_0 - x^*, \Exp\left[R^t\right] \right\rangle &= -2\gamma \left\langle x^t_0 - x^*, \Exp\left[A^t\right] + \Exp\left[B^t\right]+ \Exp\left[C^t\right] \right\rangle\nonumber\\
&= -2\gamma \left\langle x^t_0 - x^*, \Exp\left[A^t\right] \right\rangle - 2\gamma \left\langle x^t_0 - x^*, \Exp\left[B^t\right] \right\rangle- 2\gamma \left\langle x^t_0 - x^*, \Exp\left[C^t\right] \right\rangle\nonumber\\
&= \gamma^2 n\left(n-1\right) \left\langle x^t_0 - x^*, \Exp\limits_{i\neq j} H_{i} \nabla f_{j}\left(x^t_0\right) \right\rangle -2\gamma \left\langle x^t_0 - x^*, \Exp\left[B^t\right] \right\rangle- 2\gamma \left\langle x^t_0 - x^*, \Exp\left[C^t\right] \right\rangle.\label{b5}
\end{align}
For the first term in the~\eqref{b5}, we have further bound:
\begin{align}
&\ \ \ \ \gamma^2 n\left(n-1\right) \left\langle x^t_0 - x^*, \Exp\limits_{i\neq j} H_i \nabla f_j\left(x^t_0\right)\right\rangle\nonumber\\
&= \gamma^2 n\left(n-1\right)\Exp\limits_{i\neq j} \left\langle H_i \left(x^t_0 - x^*\right),  \nabla f_j\left(x^t_0\right) - \nabla f_j\left(x^*\right)\right\rangle + \gamma^2 n\left(n-1\right) \left\langle x^t_0 - x^*, \Exp\limits_{i\neq j} H_i \nabla f_j\left(x^*\right)\right\rangle\nonumber\\
&\leq \gamma^2n^2 \Exp\limits_{i, j} \left\langle \nabla f_i\left(x^t_0\right) - \nabla f_i\left(x^*\right),\nabla f_j\left(x^t_0\right) - \nabla f_j\left(x^*\right) \right\rangle + \gamma^2n\left(n-1\right)\left[\frac{\lambda}{2}\norm{x^t_0 - x^*}^2+ \frac{1}{2\lambda} \norm{\Delta}^2\right] \nonumber\\
&\ \ \ \ \ \ \ \ +\gamma^2 n\left(n-1\right)\Exp\limits_{i\neq j} \left\langle H_i \left(x^t_0 - x^*\right) - \left(\nabla f_i\left(x^t_0\right) - \nabla f_i\left(x^*\right)\right),  \nabla f_j\left(x^t_0\right) - \nabla f_j\left(x^*\right)\right\rangle\nonumber\\
&\leq \gamma^2n^2 \norm{\nabla F\left(x^t_0\right)}^2 + \frac{1}{4}\gamma\mu\left(n-1\right)\norm{x^t_0-x^*}^2 + \gamma^{3}\mu^{-1} n^{2}\left(n-1\right)\norm{\Delta}^2 + \gamma^2n\left(n-1\right)L_H L\norm{x^t_0 - x^*}^3. \label{b6}
\end{align}
Note that here $H_i\left(x^t_0 - x^*\right)$ is the matrix $H_i(x^*)$ times vector $x^t_0 - x^*$, not the Hessian at point $x^t_0 - x^*$. The last inequality is because of 
\begin{align*}
\norm{H_i\left(x^t_0 - x^*\right) - \left(\nabla f_i\left(x^t_0\right) - \nabla f_i\left(x^*\right)\right)} &=\norm{ H_i\left(x^t_0 - x^*\right) - \int_{x^*}^{x^t_0} H_i\left(x\right) dx}\\
&=\norm{\int_{x^*}^{x^t_0} \left(H_i - H_i\left(x\right)\right)dx}\\
&\leq\int_{0}^{\norm{x^t_0-x^*}} \norm{H_i - H_i\left(x^* + t\frac{x^t_0-x^*}{\norm{x^t_0-x^*}}\right)}dt\\
&\leq L_H \norm{x^t_0 - x^*}^2.
\end{align*}
For the second term in~\eqref{b5}, we use the bound
\begin{align}
-2\gamma \left\langle x^t_0 - x^*, \Exp\left[B^t\right]\right\rangle &\leq \frac{1}{4}\gamma\mu\left(n-1\right)\norm{x^t_0- x^*}^2 + 2\mu^{-1}\gamma^5L^4G^2n^5.\label{b7}
\end{align}
For the third term in~\eqref{b5}, we use the bound
\begin{align}
-2\gamma \left\langle x^t_0 - x^*, \Exp\left[C^t\right]\right\rangle &\leq 2\gamma\norm{x^t_0 - x^*}\cdot\left(n^2\gamma L_H G \norm{x^t_0 - x^*} + n^3\gamma^2 L_H G^2\right)\nonumber\\
&= 2n^2\gamma^2L_H G\norm{x^t_0 - x^*}^2 + \gamma^3n^32\norm{x^t_0 - x^*}L_HG^2\nonumber\\
&\leq 3n^2\gamma^2L_H G\norm{x^t_0 - x^*}^2 + \gamma^4n^4G^3L_H. \label{b8}
\end{align}
Substituting~\eqref{b6}~\eqref{b7}~\eqref{b8} back to~\eqref{b5}, we get 
\begin{align}
-2\gamma \left\langle x^t_0 - x^*, \Exp\left[R^t\right] \right\rangle &\leq  \gamma^2n^2 \norm{\nabla F\left(x^t_0\right)}^2 + \frac{1}{2}\gamma\mu\left(n-1\right)\norm{x^t_0-x^*}^2 + \gamma^{3} \mu^{-1}n^{2}\left(n-1\right)\norm{\Delta}^2 \nonumber\\
&\ \ \ \ + 2\mu^{-1}\gamma^5L^4G^2n^5 + \gamma^4n^4G^3L_H + \gamma^2n^2\left(L_H LD + 3L_H G\right)\norm{x^t_0 - x^*}^2. \label{b9}
\end{align}
Substituting~\eqref{b9} to~\eqref{a2}, for one epoch we get recursion bound:
\begin{align}
&\ \ \ \ \Exp\norm{x^t_n - x^*}^2 \nonumber\\
&\leq \left(1-2n\gamma\frac{L\mu}{L+\mu} +\frac{1}{2}\gamma\mu\left(n-1\right)+\gamma^2n^2\left(L_H LD + 3L_H G\right)\right)\norm{x^t_0 - x^*}^2-\left(2n\gamma\frac{1}{L+\mu}-3\gamma^2n^2\right) \norm{\nabla F\left(x^t_0\right)}^2 \nonumber\\
&\ \ \ \ \ \ \ \ + \gamma^{3}\mu^{-1} n^{2}\left(n-1\right)\norm{\Delta}^2 + 2\mu^{-1}\gamma^5L^4G^2n^5 + \gamma^4n^4G^3L_H + \frac{1}{2}n^4\gamma^4G^2L^2. \label{b10}
\end{align}
Now assume \[\frac{3}{2}n\gamma\frac{L\mu}{L+\mu}>\frac{1}{2}\gamma\mu\left(n-1\right)+\gamma^2n^2\left(L_H LD + 3L_H G\right),\] and \[2n\gamma\frac{1}{L+\mu}-3\gamma^2n^2>0,\] which we call assumption $1$ and assumption $2$, \eqref{b10} can be further turned into:
\begin{align}
\Exp\left[\norm{x^t_n - x^*}^2\right] \leq \left(1-\frac{1}{2}n\gamma\frac{L\mu}{L+\mu}\right) \norm{x^t_0 - x^*}^2 + \gamma^3nC_1 + \gamma^{4}n^4C_2 + \gamma^5n^5C_3, \label{b11}
\end{align}
where $C_1 =2\mu^{-1}L^2G^2$, $C_2 =G^3L_H +\frac{1}{2}G^2L^2$, $C_3 = 2\mu^{-1}L^4G^2$.
Further assume $n\gamma\frac{L\mu}{L+\mu}<1$, which we call assumption $3$, expanding~\eqref{b11} over all the epochs we finally get a bound for \rsgd:
\begin{align*}
\Exp\norm{x_T - x^*}^2 \leq \left(1-\frac{1}{2}n\gamma\frac{L\mu}{L+\mu}\right)^{\frac{T}{n}}\norm{x_0 - x^*}^2 + \frac{T}{n} \left(\gamma^3nC_1 + \gamma^{4}n^{4}C_2 + \gamma^5n^5C_3\right).
\end{align*}
Let $\gamma = \frac{8\log T}{T\mu}$, there is 
\begin{align}
\Exp\norm{x_T - x^*}^2 &\leq \left(1-\frac{2n\log T}{T}\right)^{\frac{T}{2n\log T} 2\log T}\norm{x_0 - x^*}^2 + \frac{T}{n} \left(\gamma^3nC_1 + \gamma^{4}n^{4}C_2 + \gamma^5n^5C_3\right)\nonumber\\
&\leq \frac{1}{T^2}\norm{x_0-x^*}^2 + \frac{1}{T^2}\left(\log T\right)^3 C_4  + \frac{n^3}{T^3}\left(\log T\right)^4 C_5 + \frac{n^4}{T^4}\left(\log T\right)^5 C_6, \label{b12}
\end{align}
where $C_4 = \frac{512C_1}{\mu^3}$,  $C_5 = \frac{4096C_2}{\mu^{4}}$, $C_6 = \frac{8^5C_2}{\mu^{5}}$.The second inequality comes from $\left(1-x\right)^{\frac{1}{x}} \leq \frac{1}{e}$ for $0<x<1$.
Obviously, this is a result of the form $\mathcal{O}\left(\frac{1}{T^2} + \frac{n^3}{T^3}\right)$. 

What remains to determine is to satisfy the three assumptions: (1) $\frac{3}{2}n\gamma\frac{L\mu}{L+\mu}>\frac{1}{2}\gamma\mu\left(n-1\right)+\gamma^2n^2\left(L_H LD + 3L_H G\right)$, (2) $2n\gamma\frac{1}{L+\mu}-3\gamma^2n^2>0$, and (3) $n\gamma\frac{L\mu}{L+\mu}<1$. 
The first is satisfied when\[n\gamma\frac{L\mu}{L+\mu}>\frac{1}{2}\gamma\mu\left(n-1\right),\] which is naturally satisfied and \[\frac{1}{2}n\gamma\frac{L\mu}{L+\mu}>\gamma^2n^2\left(L_H LD + 3L_H G\right),\] which is equivalent to \[\frac{T}{\log T} > 16\frac{L+\mu}{L\mu^2}\left(L_H LD + 3L_H G\right)n,\] which is obviously satisfied if we assume \[\frac{T}{\log T} > \frac{32}{\mu^2}\left(L_H LD + 3L_H G\right)n.\] The second assumption is equivalent to \[\frac{T}{\log T} > 12\left(1+\frac{L}{\mu}\right) n.\] Assumption $3$ is equivalent to \[\frac{T}{\log T} > \frac{8L}{L+\mu}n,\] which is satisfied when \[\frac{T}{\log T} > 8n.\] Since $12\left(1+\frac{L}{\mu}\right)>8$, we only need \[\frac{T}{\log T} > \max\left\{ \frac{32}{\mu^2}\left(L_H LD + 3L_H G\right)n, 12\left(1+\frac{L}{\mu}\right) n \right\}.\]  So whenever $\frac{T}{\log T} > \max\left\{\frac{32}{\mu^2}\left(L_H LD + 3L_H G\right), 12\left(1+\frac{L}{\mu}\right) \right\}n$, the three assumptions hold. Therefore the theorem is proved.
\end{proof}

\section{Proof of Theorem \ref{thm42}} \label{proofthm42}

\begin{proof}
We only need to show that when $T=n$ (i.e., one epoch is run for each problem) and $n$ is even, no such step size schedule exists. We note the random permutation of this single epoch as $\sigma(\cdot)$. For $n$ even, consider the following quadratic problem: \[F(x) = \frac{1}{n}\sum_{i=1}^n f_i(x),\] where 
\begin{align*}
f_i(x) = \left\{\begin{array}{ll}
                  \frac{1}{2} (x-b)' A (x-b) & i\ odd,\\
                  \\
                  \frac{1}{2} (x+b)' A (x+b) & i\ even,\\
                \end{array}
              \right.
\end{align*}
where $A$ is some $d\times d$ positive definite matrix with minimal eigenvalue $\mu$ and maximal eigenvalue $L$, $b$ is a $d$ dimensional vector. We use $(\cdot)'$ to notate the transpose, so as to distinguish from exponential $T$. The exact value of $A$ and $b$ will be determined later. Obviously, $x^*=0$ is the minimizer. In this setting, we have:
\begin{align}
x_{t}  &= x_{t-1}  - \gamma A(x_{t-1} + (-1)^{\sigma(t)}b)\nonumber\\
&= (I - \gamma A) x_{t-1} -(-1)^{\sigma(t)} \gamma A b. \label{c1}
\end{align}
Expanding~\eqref{c1} over iterations leads to: 
\begin{align}
x_T  &= (I-\gamma A)^T x_0 - \sum_{t=1}^T (-1)^{\sigma(t)} \gamma(I-\gamma A)^{T-t} A b. \label{c2}
\end{align}

Taking expectation of~\eqref{c2} over the randomness of $\sigma$, there is 
\begin{align}
\Exp\left[x_T\right] = (I-\gamma A)^T x_0. \label{c3}
\end{align}

With~\eqref{c2}~\eqref{c3}, we have close-formed expression on the final error:
\begin{align}
\Exp\left[\norm{x_T - x^*}^2\right] &=  \norm{\Exp\left[x_T\right] - x^*}^2 + \Exp\left[\norm{x_T - \Exp\left[x_T\right]}^2\right]\nonumber\\
&= \norm{(I-\gamma A)^T (x_0-x^*)}^2 + \Exp\left[\norm{\sum_{t=1}^T (-1)^{\sigma(t)} \gamma(I-\gamma A)^{T-t} A b}^2\right]. \label{c4}
\end{align}

Assume the eigenvalues of $A$ are $\lambda_1, \lambda_2,\cdots, \lambda_d$, there is an orthogonal basis $e_1, \cdots, e_d$ for $\mathbb{R}^d$ such that $e_k$ is eigenvector of $A$ with eigenvalue $\lambda_k$. We can write \[b = \sum_{i=1}^d b_i e_i.\] Since $\left\langle e_i, e_j\right\rangle = 0$ for $i\neq j$, we can simplify the last term in~\eqref{c4}:
\begin{align}
\norm{\sum_{t=1}^T (-1)^{\sigma(t)} \gamma(I-\gamma A)^{T-t} A b}^2 &= \norm{\sum_{t=1}^T (-1)^{\sigma(t)} \gamma(I-\gamma A)^{T-t} A (\sum_{i=1}^d b_ie_i)}^2\nonumber\\
&= \norm{\sum_{i=1}^d\left[ \sum_{t=1}^T (-1)^{\sigma(t)} \gamma(I-\gamma A)^{T-t} A (b_ie_i)\right]}^2\nonumber\\
&= \norm{\sum_{i=1}^d\left[ \sum_{t=1}^T (-1)^{\sigma(t)} \gamma(1-\gamma \lambda_i)^{T-t} \lambda_i (b_ie_i)\right]}^2\nonumber\\
&= \sum_{i=1}^d\left[ \sum_{t=1}^T (-1)^{\sigma(t)} \gamma(1-\gamma \lambda_i)^{T-t} \lambda_i b_i\right]^2\nonumber\\
&= \gamma^2 \sum_{i=1}^d b_i^2  \lambda_i^2 \left[ \sum_{t=1}^T (-1)^{\sigma(t)} (1-\gamma \lambda_i)^{T-t}\right]^2. \label{c5}
\end{align}

Substituting~\eqref{c5} to~\eqref{c4}, we have 
\begin{align}
\Exp\left[\norm{x_T - x^*}^2\right] &=  \norm{(I-\gamma A)^T (x_0-x^*)}^2 + \gamma^2 \sum_{i=1}^d b_i^2  \lambda_i^2 \Exp\left[\left[ \sum_{t=1}^T (-1)^{\sigma(t)} (1-\gamma \lambda_i)^{T-t}\right]^2\right] \label{c6}
\end{align}

Once again, we can write \[x_0 - x^* = \sum_{i=1}^d a_i e_i.\] Then~\eqref{c6} can simplified as 
\begin{align}
\Exp\left[\norm{x_T - x^*}^2\right] &=  \sum_{i=1}^d (1-\gamma \lambda_i)^{2T} a_i^2 + \gamma^2 \sum_{i=1}^d b_i^2  \lambda_i^2 \Exp\left[\left[ \sum_{t=1}^T (-1)^{\sigma(t)} (1-\gamma \lambda_i)^{T-t}\right]^2\right] \label{c7}
\end{align}

Define random variables $s_t = (-1)^{\sigma(t)}$ for $t=1,\cdots, T$. Then for any index pair $t\neq u$, over randomness of $\sigma$, there is 
\begin{align*}
\Exp\left[s_ts_u\right] &= \frac{2\frac{(\frac{T}{2})(\frac{T}{2}-1)}{2}}{\frac{T(T-1)}{2}} - \frac{(\frac{T}{2})(\frac{T}{2})}{\frac{T(T-1)}{2}}\\
&= -\frac{1}{T-1}.
\end{align*}
Using this fact, we can simplify the last term in~\eqref{c7} as:
\begin{align}
\Exp\left[\left[ \sum_{t=1}^T (-1)^{\sigma(t)} (1-\gamma \lambda_i)^{T-t}\right]^2\right] &= \sum_{t=1}^T (1-\gamma \lambda_i)^{2(T-t)} + \sum_{t\neq u}(1-\gamma \lambda_i)^{2T-t-u}\Exp\left[s_ts_u\right]\nonumber\\
 &= \sum_{t=0}^{T-1} (1-\gamma \lambda_i)^{2t} - \frac{1}{T-1} \sum_{t=0}^{T-1}\sum_{u=0, u\neq t}^{T-1}(1-\gamma \lambda_i)^{t+u}\nonumber\\
 &= \sum_{t=0}^{T-1} (1-\gamma \lambda_i)^{2t}  + \frac{1}{T-1} \sum_{t=0}^{T-1} (1-\gamma \lambda_i)^{2t} - \frac{1}{T-1} \left[\sum_{t=0}^{T-1} (1-\gamma \lambda_i)^{t} \right]^2\nonumber\\
 &= \frac{T}{T-1} \frac{1-(1-\gamma \lambda_i)^{2T}}{1-(1-\gamma \lambda_i)^{2}} - \frac{1}{T-1} \left[\frac{1-(1-\gamma \lambda_i)^T}{\gamma\lambda_i}\right]^2. \label{c8}
\end{align}
For contradiction, we assume for any $T$, there is a $\gamma$ dependent on $T$ such that \begin{align}\Exp\left[\norm{x_T - x^*}^2\right] \leq o(\nicefrac{1}{T}).\label{fasterconvergence}\end{align} Now we determine the specific requirement of $A$ and $b$. The only requirement is: $A$ has at least three different positive eigenvalues $\lambda_1 > \lambda_2 > \lambda_3$ , and $b_i\neq 0$ for any $i$. Furthermore, we assume $a_i\neq 0$ for any $i$. Now for the faster convergence rate~\eqref{fasterconvergence} to hold, from~\eqref{c7} we know there must be \begin{align}(1-\gamma \lambda_i)^{2T} = o(\frac{1}{T}),\label{1faster}\end{align} \begin{align}\gamma^2 \Exp\left[\left[ \sum_{t=1}^T (-1)^{\sigma(t)} (1-\gamma \lambda_i)^{T-t}\right]^2\right] = o(\frac{1}{T}),\label{2faster}\end{align} hold for any $i$. 

However with~\eqref{c8}, we know:
\begin{align}
&\ \ \ \ \gamma^2 \Exp\left[\left[ \sum_{t=1}^T (-1)^{\sigma(t)} (1-\gamma \lambda_i)^{T-t}\right]^2\right] \nonumber\\
&= \gamma^2\left\{ \frac{T}{T-1} \frac{1-(1-\gamma \lambda_i)^{2T}}{1-(1-\gamma \lambda_i)^{2}} - \frac{1}{T-1} \left[\frac{1-(1-\gamma \lambda_i)^T}{\gamma\lambda_i}\right]^2\right\}\nonumber\\
&= \gamma^2\left[\frac{T}{T-1} \frac{1}{1-(1-\gamma \lambda_i)^{2}} - \frac{1}{T-1} \frac{1}{\gamma^2\lambda_i^2}\right] + \gamma^2\left[\frac{T}{T-1} \frac{(1-\gamma \lambda_i)^{2T}}{1-(1-\gamma \lambda_i)^{2}} - \frac{1}{T-1} \frac{-2(1-\gamma \lambda_i)^T+(1-\gamma \lambda_i)^{2T}}{\gamma^2\lambda_i^2}\right].\label{c9}
\end{align}
So by~\eqref{2faster}, there must be~\eqref{c9} is $o(\frac{1}{T})$. We now analyze the terms in~\eqref{c9}. There must be $|1-\gamma\lambda_1|<1$ for convergence, so $|\gamma|$ is no more than $\frac{2}{\lambda_1}$ which is constant. Since~\eqref{1faster}, there is $(1-\gamma \lambda_i)^{T} = o(1) $, so \[\gamma^2\left[- \frac{1}{T-1} \frac{-2(1-\gamma \lambda_i)^T+(1-\gamma \lambda_i)^{2T}}{\gamma^2\lambda_i^2}\right] = o(\frac{1}{T}).\] Again, since $|1-\gamma\lambda_1|<1$, for $i=2,3$ there is \[|\frac{\gamma^2}{2\gamma\lambda_i - \gamma^2\lambda_i^2}|\leq \frac{\frac{2}{\lambda_1}}{(2-\frac{2\lambda_i}{\lambda_1})\lambda_i}\] which is constant. Therefore by~\eqref{1faster}, \[\gamma^2\left[\frac{T}{T-1} \frac{(1-\gamma \lambda_i)^{2T}}{1-(1-\gamma \lambda_i)^{2}}\right] = o(\frac{1}{T})\] for $i=2,3$. So for what remains in~\eqref{c9}, \[\gamma^2\left[\frac{T}{T-1} \frac{(1-\gamma \lambda_i)^{2T}}{1-(1-\gamma \lambda_i)^{2}} - \frac{1}{T-1} \frac{-2(1-\gamma \lambda_i)^T+(1-\gamma \lambda_i)^{2T}}{\gamma^2\lambda_i^2}\right]=o(\frac{1}{T})\] for $i=2,3$. Therefore, 

\[\gamma^2\left[\frac{T}{T-1} \frac{1}{1-(1-\gamma \lambda_i)^{2}} - \frac{1}{T-1} \frac{1}{\gamma^2\lambda_i^2}\right] = o(\frac{1}{T}),\] so \[\gamma \frac{T}{T-1}\frac{1}{\lambda_i(2-\gamma\lambda_i)} = \frac{1}{T-1}\frac{1}{\lambda_i^2}+o(\frac{1}{T}),\] which means \[\frac{\gamma T}{2-\gamma\lambda_i} = \frac{1}{\lambda_i} + o(1).\] Since $2-\frac{2}{\lambda_1}\lambda_i \leq 2-\gamma\lambda_i \leq 2$ for $i=2,3$, there must be \[\sup\lim\limits_{T\rightarrow \infty}\gamma T<C\]  for some $C>0$, so $\gamma\rightarrow 0$ as $T\rightarrow \infty$. Therefore, $(2-\gamma\lambda_i)\rightarrow 2$. So there has to be \[\lim\limits_{T\rightarrow \infty}\gamma T = \frac{2}{\lambda_i}.\] However, this cannot be true for $\lambda_2 \neq \lambda_3$ at the same time, contradiction. As a result, no step size can leads to convergence of $o(\frac{1}{T})$.
\end{proof}

\section{Proof of Theorem \ref{thm52}}
\begin{proof}
The idea is similar to the proof of theorem~\ref{thm43}, with a slightly different analysis on the $R^t$ term adopting the sparsity parameter. For any $i$, we use $H_i$ to denote $H_i(x^*)$. Again, we have the following decomposition for the error term:
\begin{align}
R^t &= \sum_{i=1}^n \left[\nabla f_{\sigma_t(i)} (x^t_{i-1}) - \nabla f_{\sigma_t(i)}(x^t_{0})\right]\nonumber\\
&= \sum_{i=1}^n \left[\int_{x^t_0}^{x^t_{i-1}} H_{\sigma_t(i)}(x) dx\right]\nonumber\\
&= \sum_{i=1}^n \left[\int_{x^t_0}^{x^t_{i-1}} H_{\sigma_t(i)} dx\right] +  \sum_{i=1}^n \left[\int_{x^t_0}^{x^t_{i-1}} (H_{\sigma_t(i)}(x)-H_{\sigma_t(i)}) dx\right]\nonumber\\
&=  \sum_{i=1}^n\left[H_{\sigma_t(i)} (x^t_{i-1} - x^t_0)\right] +  \sum_{i=1}^n \left[\int_{x^t_0}^{x^t_{i-1}} (H_{\sigma_t(i)}(x)-H_{\sigma_t(i)}) dx\right]\nonumber\\
&= \sum_{i=1}^n\left[H_{\sigma_t(i)} \sum_{j=1}^{i-1}(-\gamma \nabla f_{\sigma_t(j)}(x^t_{j-1}))\right] +  \sum_{i=1}^n \left[\int_{x^t_0}^{x^t_{i-1}} (H_{\sigma_t(i)}(x)-H_{\sigma_t(i)}) dx\right]\nonumber\\
&= -\gamma \sum_{i=1}^n \left[H_{\sigma_t(i)}\sum_{j=1}^{i-1} \nabla f_{\sigma_t(j)} (x^t_0)\right] - \gamma \sum_{i=1}^n \left\{H_{\sigma_t(i)}\sum_{j=1}^{i-1}\left[\nabla f_{\sigma_t(j)} (x^t_{j-1})-\nabla f_{\sigma_t(j)} (x^t_0)\right]\right\} \nonumber\\
&\ \ \ \ + \sum_{i=1}^n \left[\int_{x^t_0}^{x^t_{i-1}} (H_{\sigma_t(i)}(x)-H_{\sigma_t(i)}) dx\right] \nonumber\\
&=A^t + B^t + C^t.\label{d1}
\end{align}
Here we define random variables \[A^t = -\gamma \sum_{i=1}^n \left[H_{\sigma_t(i)}\sum_{j=1}^{i-1} \nabla f_{\sigma_t(j)} (x^t_0)\right],\]
\[B^t = - \gamma \sum_{i=1}^n \left\{H_{\sigma_t(i)}\sum_{j=1}^{i-1}\left[\nabla f_{\sigma_t(j)} (x^t_{j-1})-\nabla f_{\sigma_t(j)} (x^t_0)\right]\right\},\]
\[C^t = \sum_{i=1}^n \left[\int_{x^t_0}^{x^t_{i-1}} (H_{\sigma_t(i)}(x)-H_{\sigma_t(i)}) dx\right].\]
This time, we have bounds for these three terms adopting sparsity information:
\begin{align}
\Exp\left[A^t\right] = -\frac{n(n-1)}{2} \gamma \Exp\limits_{i\neq j}\left[ H_{\sigma_t(i)} \nabla f_{\sigma_t(j)}(x^t_0)\right], \label{d2}
\end{align}
\begin{align}
\norm{B^t} &\leq \gamma \sum_{i=1}^n H_{\sigma_t(i)} \sum_{j=1}^{i-1} (\nabla f_{\sigma_t(j)} (x^t_{j-1}) - \nabla f_{\sigma_t(j)} (x^t_0))\nonumber\\
&\leq \gamma \sum_{i=1}^n L \sum_{j=1}^{i-1} \rho n \gamma GL\nonumber\\
&\leq n^3 \gamma^2 \rho GL^2. \label{d3}
\end{align}
\begin{align}
\norm{C^t} &\leq \sum_{i=1}^n \sum_{j=1}^{i-1}\norm{\int_{x^t_{j-1}}^{x^t_j} (H_{\sigma_t(i)}(x)-H_{\sigma_t(i)}) dx}\nonumber\\
&\leq \sum_{i=1}^n \rho n \left[\max\left\{\norm{x^t_{j}-x^*}| 0\leq j \leq i-1\right\} L_H\gamma G\right]\nonumber\\
&\leq \rho n^2\left[(\norm{x^t_0-x^*} + n\gamma G)L_H \gamma G\right]\nonumber\\
&= \rho n^2\gamma L_H G \norm{x^t_0 - x^*} + \rho n^3\gamma^2 L_H G^2. \label{d4}
\end{align}
Here the introduction of $\rho$ in~\eqref{d3} is because: if $f_{\sigma_t(k)} $ and  $f_{\sigma_t(j)} $ depend on disjoint dimensions of variables and $k<j$, then there must be $\nabla f_{\sigma_t(j)} (x^t_k) = \nabla f_{\sigma_t(j)} (x^t_{k-1})$. The introduction of $\rho$ in~\eqref{d4} is similar: if $f_{\sigma_t(i)} $ and  $f_{\sigma_t(j)} $ depend on disjoint dimensions of variables and $j<i$, then there must be $\int_{x^t_{j-1}}^{x^t_j} (H_{\sigma_t(i)}(x)-H_{\sigma_t(i)}) dx=0$.

With~\eqref{d1}~\eqref{d2}, we can decompose the innerproduct of $x^t_0 - x^*$ and $\Exp\left[R^t\right]$ into:
\begin{align}
-2\gamma \left\langle x^t_0 - x^*, \Exp\left[R^t\right] \right\rangle &= -2\gamma \left\langle x^t_0 - x^*, \Exp\left[A^t\right] + \Exp\left[B^t\right]+ \Exp\left[C^t\right] \right\rangle\nonumber\\
&= -2\gamma \left\langle x^t_0 - x^*, \Exp\left[A^t\right] \right\rangle - 2\gamma \left\langle x^t_0 - x^*, \Exp\left[B^t\right] \right\rangle- 2\gamma \left\langle x^t_0 - x^*, \Exp\left[C^t\right] \right\rangle\nonumber\\
&= \gamma^2 n(n-1) \left\langle x^t_0 - x^*, \Exp\limits_{i\neq j} H_{i} \nabla f_{j}(x^t_0) \right\rangle -2\gamma \left\langle x^t_0 - x^*, \Exp\left[B^t\right] \right\rangle- 2\gamma \left\langle x^t_0 - x^*, \Exp\left[C^t\right] \right\rangle.\label{d5}
\end{align}
For the first term in the~\eqref{d5}, there is
\begin{align}
&\ \ \ \ \gamma^2 n(n-1) \left\langle x^t_0 - x^*, \Exp\limits_{i\neq j} H_i \nabla f_j(x^t_0)\right\rangle\nonumber\\
&= \gamma^2 n(n-1)\Exp\limits_{i\neq j} \left\langle H_i (x^t_0 - x^*),  \nabla f_j(x^t_0) - \nabla f_j(x^*)\right\rangle + \gamma^2 n(n-1) \left\langle x^t_0 - x^*, \Exp\limits_{i\neq j} H_i \nabla f_j(x^*)\right\rangle\nonumber\\
&\leq \gamma^2n^2 \Exp\limits_{i, j} \left\langle \nabla f_i(x^t_0) - \nabla f_i(x^*),\nabla f_j(x^t_0) - \nabla f_j(x^*) \right\rangle + \gamma^2n(n-1)\left[\frac{\lambda}{2}\norm{x^t_0 - x^*}^2+ \frac{1}{2\lambda} \norm{\Delta}^2\right] \nonumber\\
&\ \ \ \ \ \ \ \ +\gamma^2 n(n-1)\Exp\limits_{i\neq j} \left\langle H_i (x^t_0 - x^*) - (\nabla f_i(x^t_0) - \nabla f_i(x^*)),  \nabla f_j(x^t_0) - \nabla f_j(x^*)\right\rangle\nonumber\\
&\leq \gamma^2n^2 \norm{\nabla F(x^t_0)}^2 + \frac{1}{4}\gamma\mu(n-1)\norm{x^t_0-x^*}^2 + \gamma^{3}\mu^{-1} n^{2}(n-1)\norm{\Delta}^2 + \gamma^2n(n-1)L_H L\norm{x^t_0 - x^*}^3. \label{d6}
\end{align}
Where the last inequality is because of 
\begin{align*}
\norm{H_i(x^t_0 - x^*) - (\nabla f_i(x^t_0) - \nabla f_i(x^*))} &=\norm{ H_i(x^t_0 - x^*) - \int_{x^*}^{x^t_0} H_i(x) dx}\\
&=\norm{\int_{x^*}^{x^t_0} (H_i - H_i(x))dx}\\
&\leq\int_{0}^{\norm{x^t_0-x^*}} \norm{H_i - H_i\left(x^* + t\frac{x^t_0-x^*}{\norm{x^t_0-x^*}}\right)}dt\\
&\leq L_H \norm{x^t_0 - x^*}^2.
\end{align*}
For the second term in~\eqref{d5}, we use the bound
\begin{align}
-2\gamma \left\langle x^t_0 - x^*, \Exp\left[B^t\right]\right\rangle &\leq \frac{1}{4}\gamma\mu(n-1)\norm{x^t_0- x^*}^2 + 2\mu^{-1}\gamma^5\rho^2L^4G^2n^5. \label{d7}
\end{align}
For the third term in~\eqref{d5}, we use the bound
\begin{align}
-2\gamma \left\langle x^t_0 - x^*, \Exp\left[C^t\right]\right\rangle &\leq 2\gamma\norm{x^t_0 - x^*}\cdot(\rho n^2\gamma L_H G \norm{x^t_0 - x^*} + \rho n^3\gamma^2 L_H G^2)\nonumber\\
&= 2n^2\rho\gamma^2L_H G\norm{x^t_0 - x^*}^2 + \rho\gamma^3n^32\norm{x^t_0 - x^*}L_HG^2\nonumber\\
&\leq (2\rho+1)n^2\gamma^2L_H G\norm{x^t_0 - x^*}^2 + \rho^2\gamma^4n^4G^3L_H\nonumber\\
&\leq 3n^2\gamma^2L_H G\norm{x^t_0 - x^*}^2 + \rho^2\gamma^4n^4G^3L_H. \label{d8}
\end{align}
Substituting~\eqref{d6}~\eqref{d7}~\eqref{d8} back to~\eqref{d5}, we get 
\begin{align}
-2\gamma \left\langle x^t_0 - x^*, \Exp\left[R^t\right] \right\rangle &\leq  \gamma^2n^2 \norm{\nabla F(x^t_0)}^2 + \frac{1}{2}\gamma\mu(n-1)\norm{x^t_0-x^*}^2 + \gamma^{3} \mu^{-1}n^{2}(n-1)\norm{\Delta}^2 \nonumber\\
&\ \ \ \ + 2\mu^{-1}\gamma^5\rho^2L^4G^2n^5 +\rho^2\gamma^4n^4G^3L_H + \gamma^2n^2(L_H LD + 3 L_H G)\norm{x^t_0 - x^*}^2. \label{d9}
\end{align}
Substituting~\eqref{d9} to~\eqref{a2}, we have recursion bound for one epoch:
\begin{align*}
&\ \ \ \ \Exp\norm{x^t_n - x^*}^2 \\
&\leq (1-2n\gamma\frac{L\mu}{L+\mu} +\frac{1}{2}\gamma\mu(n-1)+\gamma^2n^2(L_H LD + 3 L_H G))\norm{x^t_0 - x^*}^2-(2n\gamma\frac{1}{L+\mu}-3\gamma^2n^2) \norm{\nabla F(x^t_0)}^2 \\
&\ \ \ \ \ \ \ \ + \gamma^{3} \mu^{-1}n^{2}(n-1)\norm{\Delta}^2 + \rho^2\gamma^4n^4G^3L_H + 2\mu^{-1}\gamma^5\rho^2L^4G^2n^5 + 2\rho^2 n^4\gamma^4G^2L^2.
\end{align*}
Here the last inequality is because
\begin{align*}
\norm{R^t} &= \norm{\sum_{i=1}^n \nabla f_{\sigma_t(i)} (x^t_{i-1}) - \sum_{i=1}^n \nabla f_{\sigma_t(i)}(x^t_0)}\\
&\leq \sum_{i=1}^n \norm{\nabla f_{\sigma_t(i)} (x^t_{i-1}) - \nabla f_{\sigma_t(i)}(x^t_0)}\\
&= \sum_{i=1}^n \norm{\sum_{j=1}^{i-1} (\nabla f_{\sigma_t(i)} (x^t_{j}) - \nabla f_{\sigma_t(i)} (x^t_{j-1})) }\\
&\leq \sum_{i=1}^n \sum_{j=1}^{i-1} \norm{ \nabla f_{\sigma_t(i)} (x^t_{j}) - \nabla f_{\sigma_t(i)} (x^t_{j-1}) }\\
&\leq n^2 \rho L\gamma G.
\end{align*}
Finally, we again use the fact
 \[\norm{\Delta} \leq \frac{1}{n-1} L G.\]
The remaining process is same as proof of theorem \ref{thm43}, leading to a bound $\mathcal{O}( \frac{1}{T^2} + \frac{\rho^2 n^3}{T^3})$.

\end{proof}

\section{Proof of Theorem~\ref{thpl}} \label{proofPL}
\begin{proof}
The idea is similar to the proof of theorem~\ref{thm43}. For any vector $v$ not being zero, define vector value directional function \[dir\left(v\right) = \frac{v}{\norm{v}},\] with norm being $\ell_2$ norm. For the convenience of notation, we define $dir\left(\vec{0}\right) = \vec{0}$, where $\vec{0}$ is the zero vector. For any two points $a,b\in \mathbb{R}^d$, and a matrix function $g\left(\cdot\right): \mathbb{R}^d\rightarrow \mathbb{R}^{d\times d}$, define line integral:

\[\int_{a}^b g\left(x\right) dx := \int_0^{\norm{b-a}} g\left(a+t\frac{b-a}{\norm{b-a}}\right) dir\left(b-a\right) dt,\] where the integral on the right hand side is integral of vector valued function over real number interval. This integral represents integrating the matrix values function along the line from $a$ to $b$. Again, define error term \[R^t = \sum_{i=1}^n \nabla f_{\sigma_t\left(i\right)} \left(x^t_{i-1}\right) - \sum_{i=1}^n \nabla f_{\sigma_t\left(i\right)}\left(x^t_0\right).\]

Assume $F^*$ being the minimum of function $F(\cdot)$. For one epoch of \rsgd, we have
\begin{align}
F(x^{t+1}_0) - F^* &\leq F(x^t_0) - F^* - \gamma \left\langle \nabla F(x^t_0), n\nabla F(x^t_0) + R^t\right\rangle + \frac{L}{2}\gamma^2 \norm{n\nabla F(x^t_0) + R^t}^2 \nonumber\\
&\leq (1-2n\mu\gamma )\left[F(x^t_0) - F^* \right] - \gamma\left\langle \nabla F(x^t_0), R^t\right\rangle + \frac{L}{2}\gamma^2 \left[2n^2\norm{\nabla F(x^t_0)}^2 + 2\norm{R^t}^2\right]  \nonumber \\
&\leq (1-2n\mu\gamma + 2L^2n^2\gamma^2 )\left[F(x^t_0) - F^* \right]- \gamma\left\langle \nabla F(x^t_0), R^t\right\rangle + L\gamma^2 \norm{R^t}^2. \label{ll}
\end{align}
Here the second inequality is by the definition of Polyak-\L ojasiewicz condition, the last inequality uses the fact \[2L[F(x^t_0) - F^*] \geq \norm{\nabla F(x^t_0)}^2.\]
We have the following decomposition for the error term:
\begin{align}
R^t &= \sum_{i=1}^n \left[\nabla f_{\sigma_t\left(i\right)} \left(x^t_{i-1}\right) - \nabla f_{\sigma_t\left(i\right)}\left(x^t_{0}\right)\right]\nonumber\\
&= \sum_{i=1}^n \left[\int_{x^t_0}^{x^t_{i-1}} H_{\sigma_t\left(i\right)}\left(x\right) dx\right]\nonumber\\
&= \sum_{i=1}^n \left[\int_{x^t_0}^{x^t_{i-1}} H_{\sigma_t\left(i\right)}(x^t_0) dx\right] +  \sum_{i=1}^n \left[\int_{x^t_0}^{x^t_{i-1}} \left(H_{\sigma_t\left(i\right)}\left(x\right)-H_{\sigma_t\left(i\right)}(x^t_0)\right) dx\right]\nonumber\\
&=  \sum_{i=1}^n\left[H_{\sigma_t\left(i\right)}(x^t_0) \left(x^t_{i-1} - x^t_0\right)\right] +  \sum_{i=1}^n \left[\int_{x^t_0}^{x^t_{i-1}} \left(H_{\sigma_t\left(i\right)}\left(x\right)-H_{\sigma_t\left(i\right)}(x^t_0)\right) dx\right]\nonumber\\
&= \sum_{i=1}^n\left[H_{\sigma_t\left(i\right)}(x^t_0) \sum_{j=1}^{i-1}\left(-\gamma \nabla f_{\sigma_t\left(j\right)}\left(x^t_{j-1}\right)\right)\right] +  \sum_{i=1}^n \left[\int_{x^t_0}^{x^t_{i-1}} \left(H_{\sigma_t\left(i\right)}\left(x\right)-H_{\sigma_t\left(i\right)}(x^t_0)\right) dx\right]\nonumber\\
&= -\gamma \sum_{i=1}^n \left[H_{\sigma_t\left(i\right)}(x^t_0)\sum_{j=1}^{i-1} \nabla f_{\sigma_t\left(j\right)} \left(x^t_0\right)\right] - \gamma \sum_{i=1}^n \left\{H_{\sigma_t\left(i\right)}(x^t_0)\sum_{j=1}^{i-1}\left[\nabla f_{\sigma_t\left(j\right)} \left(x^t_{j-1}\right)-\nabla f_{\sigma_t\left(j\right)} \left(x^t_0\right)\right]\right\} \nonumber\\
&\ \ \ \ + \sum_{i=1}^n \left[\int_{x^t_0}^{x^t_{i-1}} \left(H_{\sigma_t\left(i\right)}\left(x\right)-H_{\sigma_t\left(i\right)}(x^t_0)\right) dx\right]\nonumber \\
&= A^t + B^t + C^t.\label{l1}
\end{align}
Here we define random variables \[A^t = -\gamma \sum_{i=1}^n \left[H_{\sigma_t\left(i\right)}(x^t_0)\sum_{j=1}^{i-1} \nabla f_{\sigma_t\left(j\right)} \left(x^t_0\right)\right],\]
\[B^t = - \gamma \sum_{i=1}^n \left\{H_{\sigma_t\left(i\right)}(x^t_0)\sum_{j=1}^{i-1}\left[\nabla f_{\sigma_t\left(j\right)} \left(x^t_{j-1}\right)-\nabla f_{\sigma_t\left(j\right)} \left(x^t_0\right)\right]\right\},\]
\[C^t = \sum_{i=1}^n \left[\int_{x^t_0}^{x^t_{i-1}} \left(H_{\sigma_t\left(i\right)}\left(x\right)-H_{\sigma_t\left(i\right)}(x^t_0)\right) dx\right].\]
There is 
\begin{align}
\Exp\left[A^t\right] = -\frac{n\left(n-1\right)}{2} \gamma \Exp\limits_{i\neq j}\left[ H_{i}(x^t_0) \nabla f_{j}\left(x^t_0\right)\right], \label{l2}
\end{align}
\begin{align}
\norm{B^t} &\leq \gamma \sum_{i=1}^n H_{\sigma_t\left(i\right)}(x^t_0) \sum_{j=1}^{i-1} \left(\nabla f_{\sigma_t\left(j\right)} \left(x^t_{j-1}\right) - \nabla f_{\sigma_t\left(j\right)} \left(x^t_0\right)\right)\nonumber\\
&\leq \gamma \sum_{i=1}^n L \sum_{j=1}^{i-1} \left(j-1\right)\gamma GL\nonumber\\
&= \gamma^2 L^2 G \sum_{i=1}^n \frac{\left(i-1\right)\left(i-2\right)}{2}\nonumber\\
&\leq \frac{1}{2} \gamma^2L^2Gn^3. \label{l3}
\end{align}
\begin{align}
\norm{C^t} &\leq \sum_{i=1}^n \left[\int_{0}^{\norm{x^t_{i-1}-x^t_0}} \norm{H_{\sigma_t\left(i\right)}\left(x^t_0+t\frac{x^t_{i-1}-x^t_0}{\norm{x^t_{i-1}-x^t_0}}\right)-H_{\sigma_t\left(i\right)}(x^t_0)}dt\right]\nonumber\\
&\leq \sum_{i=1}^n \left[L_H \norm{x^t_{i-1} - x^t_0}^2\right]\nonumber\\
&\leq n^3\gamma^2L_H G^2. \label{l4}
\end{align}
Using~\eqref{l1}~\eqref{l2}, we can decompose the innerproduct of $\nabla F(x^t_0 )$ and $\Exp\left[R^t\right]$ as following:
\begin{align}
-\gamma \left\langle \nabla F(x^t_0), \Exp\left[R^t\right] \right\rangle &= -\gamma \left\langle \nabla F(x^t_0), \Exp\left[A^t\right] + \Exp\left[B^t\right]+ \Exp\left[C^t\right] \right\rangle\nonumber\\
&= -\gamma \left\langle \nabla F(x^t_0), \Exp\left[A^t\right] \right\rangle - \gamma \left\langle \nabla F(x^t_0), \Exp\left[B^t\right] \right\rangle- \gamma \left\langle \nabla F(x^t_0), \Exp\left[C^t\right] \right\rangle\nonumber\\
&= \frac{1}{2}\gamma^2 n\left(n-1\right) \left\langle \nabla F(x^t_0), \Exp\limits_{i\neq j} H_{i}(x^t_0) \nabla f_{j}\left(x^t_0\right) \right\rangle -\gamma \left\langle \nabla F(x^t_0), \Exp\left[B^t\right] \right\rangle- \gamma \left\langle \nabla F(x^t_0), \Exp\left[C^t\right] \right\rangle.\label{l5}
\end{align}
For the first term in the~\eqref{l5}, we have further bound:
\begin{align}
&\ \ \ \ \frac{1}{2}\gamma^2 n\left(n-1\right) \left\langle \nabla F(x^t_0), \Exp\limits_{i\neq j} H_i(x^t_0) \nabla f_j\left(x^t_0\right)\right\rangle\nonumber\\
&= \frac{1}{2}\gamma^2 n^2 \left\langle \nabla F(x^t_0), \Exp\limits_{i,j} H_i(x^t_0) \nabla f_j\left(x^t_0\right)\right\rangle - \frac{1}{2}\gamma^2 n \left\langle \nabla F(x^t_0), \Exp\limits_{i} H_i(x^t_0) \nabla f_i\left(x^t_0\right)\right\rangle\nonumber\\
&\leq \frac{1}{2} \gamma^2n^2L\norm{\nabla F(x^t_0)}^2 + \frac{1}{8} \gamma n\frac{\mu}{L} \norm{\nabla F(x^t_0)}^2 + \frac{1}{2}\gamma^3 n \mu^{-1}L^3 G^2
\nonumber \\
&\leq ( \gamma^2n^2L^2 + \frac{1}{4} \gamma n\mu)[F(x^t_0) - F^*]+  \frac{1}{2}\gamma^3 n \mu^{-1}L^3 G^2.\label{l6}
\end{align}
For the second term in~\eqref{l5}, we use the bound
\begin{align}
-\gamma \left\langle \nabla F(x^t_0), \Exp\left[B^t\right]\right\rangle &\leq \frac{1}{8}\gamma\frac{\mu}{L}n\norm{\nabla F(x^t_0)}^2 + \frac{1}{2} \mu^{-1} \gamma^5 n^5 L^5 G^2\nonumber\\
&\leq \frac{1}{4}\gamma\mu n[F(x^t_0) - F^*] + \frac{1}{2} \mu^{-1} \gamma^5 n^5 L^5 G^2.\label{l7}
\end{align}
For the third term in~\eqref{l5}, we use the bound
\begin{align}
-\gamma \left\langle \nabla F(x^t_0), \Exp\left[C^t\right]\right\rangle &\leq \gamma\norm{\nabla F(x^t_0)}\cdot\left(  n^3\gamma^2L_H G^2 \right)\nonumber\\
&= \gamma^3 n^3 \norm{\nabla F(x^t_0)} L_H G^2\nonumber\\
&\leq \frac{1}{2}n^2\gamma^2 L \norm{\nabla F(x^t_0)}^2 + \frac{1}{2L} n^4\gamma^4L_H^2 G^4\nonumber\\
&\leq n^2\gamma^2 L^2 [F(x^t_0) - F^*] + \frac{1}{2L} n^4\gamma^4L_H^2 G^4.\label{l8}
\end{align}
Substituting~\eqref{l6}~\eqref{l7}~\eqref{l8} back to~\eqref{l5}, we get 
\begin{align}
-\gamma \left\langle \nabla F(x^t_0), \Exp\left[R^t\right] \right\rangle &\leq (\frac{1}{2}\gamma n \mu + 2n^2\gamma^2 L^2) [F(x^t_0) - F^*] + \frac{1}{2}\gamma^3 n \mu^{-1}L^3 G^2 +  \frac{1}{2} \mu^{-1} \gamma^5 n^5 L^5 G^2 +\frac{1}{2L} n^4\gamma^4L_H^2 G^4. \label{l9}
\end{align}
Substituting~\eqref{l9} to~\eqref{ll}, for one epoch we get recursion bound:
\begin{align}
&\ \ \ \ \Exp[F(x^t_0) - F^*] \nonumber\\
&\leq  (1-\frac{3}{2}n\mu\gamma + 4L^2n^2\gamma^2 )\left[F(x^t_0) - F^* \right] +  \frac{1}{2}\gamma^3 n \mu^{-1}L^3 G^2 +  \frac{1}{2} \mu^{-1} \gamma^5 n^5 L^5 G^2 +\frac{1}{2L} n^4\gamma^4L_H^2 G^4 + \frac{1}{4} n^4\gamma^4 G^2L^3. \label{l10}
\end{align}
Now assume \[ \frac{1}{2}n\mu \gamma > 4L^2n^2\gamma^2,\]  which we call assumption $1$, \eqref{l10} can be further turned into:
\begin{align}
&\ \ \ \ \Exp[F(x^t_0) - F^*] \nonumber\\
&\leq  (1-n\mu\gamma )\left[F(x^t_0) - F^* \right] +  \gamma^3 n C_1 +   n^4\gamma^4C_2 + n^5\gamma^5C_3. \label{l11}
\end{align}
where $C_1 =\frac{1}{2} \mu^{-1}L^3 G^2$, $C_2 =\frac{1}{2L} L_H^2 G^4 + \frac{1}{4}  G^2L^3$, $C_3 = \frac{1}{2} \mu^{-1} L^5 G^2$.
Further assume $n\gamma\mu<1$, which we call assumption $2$, expanding~\eqref{l11} over all the epochs we finally get a bound for \rsgd:
\begin{align*}
\Exp\norm{x_T - x^*}^2 \leq \left(1- n\gamma\mu\right)^{\frac{T}{n}}[F(x^t_0) - F^*] + \frac{T}{n} \left(\gamma^3nC_1 + \gamma^{4}n^{4}C_2 + \gamma^5n^5C_3\right).
\end{align*}
Let $\gamma = \frac{2\log T}{T\mu}$, there is 
\begin{align}
\Exp\norm{x_T - x^*}^2 &\leq \left(1-\frac{2n\log T}{T}\right)^{\frac{T}{2n\log T} 2\log T}[F(x_0) - F^*] + \frac{T}{n} \left(\gamma^3nC_1 + \gamma^{4}n^{4}C_2 + \gamma^5n^5C_3\right)\nonumber\\
&\leq \frac{1}{T^2}[F(x_0) - F^*]  + \frac{1}{T^2}\left(\log T\right)^3 C_4  + \frac{n^3}{T^3}\left(\log T\right)^4 C_5 + \frac{n^4}{T^4}\left(\log T\right)^5 C_6, \label{b12}
\end{align}
where $C_4 = \frac{8C_1}{\mu^3}$,  $C_5 = \frac{16C_2}{\mu^{4}}$, $C_6 = \frac{32C_2}{\mu^{5}}$.The second inequality comes from $\left(1-x\right)^{\frac{1}{x}} \leq \frac{1}{e}$ for $0<x<1$.
Obviously, this is a result of the form $\mathcal{O}\left(\frac{1}{T^2} + \frac{n^3}{T^3}\right)$. 

What remains to determine is to satisfy the two assumptions: (1) $\frac{1}{2}n\mu \gamma > 4L^2n^2\gamma^2$, (2) $n\gamma \mu <1$. 
The first is satisfied when\[\frac{T}{\log T} > 16 \frac{L^2}{\mu^2} n.\] The second assumption is satisfied when \[\frac{T}{\log T} > 2 n.\] Since $2<\frac{L}{\mu}$, the theorem is proved.
\end{proof}

\section{Proof of Theorem~\ref{ext1}}
\begin{proof}
For one epoch of \rsgd, We have the following inequality
\begin{align}
\norm{x^{t}_n - x^*}^2 &= \norm{x^t_0 - x^*}^2 - 2\gamma \left\langle x^t_0 - x^*, \sum_{i=1}^n \nabla f_{\sigma_t\left(i\right)} \left(x^t_{i-1}\right) \right\rangle + \gamma^2 \norm{\sum_{i=1}^n \nabla f_{\sigma_t\left(i\right)} \left(x^t_{i-1}\right)}^2\nonumber\\
&= \norm{x^t_0 - x^*}^2 - 2\gamma\left\langle x^t_0 - x^*, n\nabla F\left(x^t_0\right) \right\rangle - 2\gamma\left\langle x^t_0 - x^*, R^t\right\rangle + \gamma^2 \norm{n\nabla F\left(x^t_0\right) + R^t}^2 \nonumber\\
&\leq \norm{x^t_0-x^*}^2 - 2n\gamma\left[F(x^t_0) - F(x^*)\right] \nonumber -2\gamma\left\langle x^t_0 - x^*, R^t \right\rangle  + 2\gamma^2 n^2 \norm{\nabla F\left(x^t_0\right)}^2 + 2\gamma^2 \norm{R^t}^2\nonumber\\
&\leq \norm{x^t_0-x^*}^2 - \left(2n\gamma - 2n^2\gamma^2L\right)\left[F(x^t_0) - F(x^*)\right] -2\gamma\left\langle x^t_0 - x^*, R^t \right\rangle  + 2\gamma^2 \norm{R^t}^2, \label{e1}
\end{align}
where the first inequality is because of
\[\left\langle x^t_0 - x^*, \nabla F(x^t_0) \right\rangle \geq F(x^t_0) - F(x^*),\]
the second inequality is because of
\[\norm{\nabla F(x^t_0)}^2 \leq L \left[F(x)-F(x^*)\right].\]
Therefore, taking expectation of~\eqref{e1} leads to:
\begin{align}
\Exp[\norm{x^{t}_n - x^*}^2] \leq  \norm{x^t_0-x^*}^2 - (2n\gamma-2n^2\gamma^2L)\left[F(x^t_0) - F(x^*)\right] -2\gamma\Exp\left\langle x^t_0 - x^*, R^t \right\rangle  + 2\gamma^2\Exp\left[ \norm{R^t}^2\right], \label{e2}
\end{align}

Define random variables \[R^t_k = \sum_{i=1}^k \left[\nabla f_{\sigma_t\left(i\right)} \left(x^t_{i-1}\right) - \nabla f_{\sigma_t\left(i\right)}\left(x^t_{0}\right)\right],\]
where $1\leq k \leq n$. Obviously $R^t_n = R^t$. We firstly show that $\norm{R^t_k} \leq 3n^2L\gamma  (\norm{\nabla F(x^t_0)} + \delta)$, which is an important fact to be used in further analysis.

For any index $1\leq id \leq n$, there is \[\norm{\nabla f_{id}(x^t_1) - \nabla f_{id}(x^t_0)} \leq L\gamma(\norm{\nabla F(x^t_0)} + \delta).\]
Assume for any  $1\leq id \leq n$ and some $i$, there is (which is obviously true when $i=1$) \[\norm{\nabla f_{id}(x^t_i) - \nabla f_{id}(x^t_0)} \leq \left[\sum_{j=0}^{i-1} (1+L\gamma)^j\right] L\gamma(\norm{\nabla F(x^t_0)} + \delta).\]
Then for $i+1$, there is
\begin{nonumber}
\begin{align}
\norm{\nabla f_{id}(x^t_{i+1}) - \nabla f_{id}(x^t_0)}  &\leq \norm{\nabla f_{id}(x^t_{i}) - \nabla f_{id}(x^t_0)} + \norm{\nabla f_{id}(x^t_{i+1}) - \nabla f_{id}(x^t_i)}\\
&\leq \norm{\nabla f_{id}(x^t_{i}) - \nabla f_{id}(x^t_0)} + L\gamma (\norm{\nabla F(x^t_i)} + \delta)\\
&\leq \norm{\nabla f_{id}(x^t_{i}) - \nabla f_{id}(x^t_0)} + L\gamma (\norm{\nabla F(x^t_0)} + \delta) + L\gamma (\norm{\nabla F(x^t_i) - \nabla F(x^t_0)})\\
&\leq (1+L\gamma)\left[\sum_{j=0}^{i-1} (1+L\gamma)^j\right] L\gamma(\norm{\nabla F(x^t_0)} + \delta) + L\gamma (\norm{\nabla F(x^t_0)} + \delta) \\
&= \left[\sum_{j=0}^{i} (1+L\gamma)^j\right] L\gamma(\norm{\nabla F(x^t_0)} + \delta).
\end{align}
\end{nonumber}
So by induction, there is \[\norm{\nabla f_{id}(x^t_i) - \nabla f_{id}(x^t_0)} \leq \left[\sum_{j=0}^{i-1} (1+L\gamma)^j\right] L\gamma(\norm{\nabla F(x^t_0)} + \delta)\] for all $1\leq i \leq n$. Since $\gamma \leq \frac{1}{16nL} \leq \frac{1}{nL}$, there is $1+\gamma L \leq \frac{1}{n}$. Therefore, we have 
\begin{nonumber}
\begin{align}
\norm{\nabla f_{id}(x^t_i) - \nabla f_{id}(x^t_0)} &\leq \left[\sum_{j=0}^{i-1} (1+L\gamma)^j\right] L\gamma(\norm{\nabla F(x^t_0)} + \delta) \\
&\leq \left[n (1+\frac{1}{n})^n\right] L\gamma(\norm{\nabla F(x^t_0)} + \delta)\\
&\leq 3nL\gamma(\norm{\nabla F(x^t_0)} + \delta).
\end{align}
\end{nonumber}
Therefore, for any $1\leq k \leq n$, there is 
\begin{nonumber}
\begin{align}
\norm{R^t_k} &\leq \sum_{i=1}^k\norm{\nabla f_{\sigma_t\left(i\right)} \left(x^t_{i-1}\right) - \nabla f_{\sigma_t\left(i\right)}\left(x^t_{0}\right)}\\
&\leq \sum_{i=1}^k 3nL\gamma(\norm{\nabla F(x^t_0)} + \delta)\\
&\leq 3n^2L\gamma(\norm{\nabla F(x^t_0)} + \delta).
\end{align}
\end{nonumber}

Similar to the previous proof, we have the following decomposition for the error term:
\begin{align}
R^t &= \sum_{i=1}^n \left[\nabla f_{\sigma_t\left(i\right)} \left(x^t_{i-1}\right) - \nabla f_{\sigma_t\left(i\right)}\left(x^t_{0}\right)\right]\nonumber\\
&= \sum_{i=1}^n \left[\int_{x^t_0}^{x^t_{i-1}} H_{\sigma_t\left(i\right)}\left(x\right) dx\right]\nonumber\\
&= \sum_{i=1}^n \left[\int_{x^t_0}^{x^t_{i-1}} H_{\sigma_t\left(i\right)} dx\right] +  \sum_{i=1}^n \left[\int_{x^t_0}^{x^t_{i-1}} \left(H_{\sigma_t\left(i\right)}\left(x\right)-H_{\sigma_t\left(i\right)}\right) dx\right]\nonumber\\
&=  \sum_{i=1}^n\left[H_{\sigma_t\left(i\right)} \left(x^t_{i-1} - x^t_0\right)\right] +  \sum_{i=1}^n \left[\int_{x^t_0}^{x^t_{i-1}} \left(H_{\sigma_t\left(i\right)}\left(x\right)-H_{\sigma_t\left(i\right)}\right) dx\right]\nonumber\\
&= \sum_{i=1}^n\left[H_{\sigma_t\left(i\right)} \sum_{j=1}^{i-1}\left(-\gamma \nabla f_{\sigma_t\left(j\right)}\left(x^t_{j-1}\right)\right)\right] +  \sum_{i=1}^n \left[\int_{x^t_0}^{x^t_{i-1}} \left(H_{\sigma_t\left(i\right)}\left(x\right)-H_{\sigma_t\left(i\right)}\right) dx\right]\nonumber\\
&= -\gamma \sum_{i=1}^n \left[H_{\sigma_t\left(i\right)}\sum_{j=1}^{i-1} \nabla f_{\sigma_t\left(j\right)} \left(x^t_0\right)\right] - \gamma \sum_{i=1}^n \left\{H_{\sigma_t\left(i\right)}\sum_{j=1}^{i-1}\left[\nabla f_{\sigma_t\left(j\right)} \left(x^t_{j-1}\right)-\nabla f_{\sigma_t\left(j\right)} \left(x^t_0\right)\right]\right\} \nonumber\\
&\ \ \ \ + \sum_{i=1}^n \left[\int_{x^t_0}^{x^t_{i-1}} \left(H_{\sigma_t\left(i\right)}\left(x\right)-H_{\sigma_t\left(i\right)}\right) dx\right]\nonumber \\
&= A^t + B^t + C^t.\label{e3}
\end{align}
Here we define random variables \[A^t = -\gamma \sum_{i=1}^n \left[H_{\sigma_t\left(i\right)}\sum_{j=1}^{i-1} \nabla f_{\sigma_t\left(j\right)} \left(x^t_0\right)\right],\]
\[B^t = - \gamma \sum_{i=1}^n \left\{H_{\sigma_t\left(i\right)}\sum_{j=1}^{i-1}\left[\nabla f_{\sigma_t\left(j\right)} \left(x^t_{j-1}\right)-\nabla f_{\sigma_t\left(j\right)} \left(x^t_0\right)\right]\right\},\]
\[C^t = \sum_{i=1}^n \left[\int_{x^t_0}^{x^t_{i-1}} \left(H_{\sigma_t\left(i\right)}\left(x\right)-H_{\sigma_t\left(i\right)}\right) dx\right].\]
There is 
\begin{align}\Exp\left[A^t\right] = -\frac{n\left(n-1\right)}{2} \gamma \Exp\limits_{i\neq j}\left[ H_{i} \nabla f_{j}\left(x^t_0\right)\right],\label{e4}\end{align}
\begin{align}
\norm{B^t} &\leq \gamma \sum_{i=1}^n H_{\sigma_t\left(i\right)} \sum_{j=1}^{i-1} \norm{\nabla f_{\sigma_t\left(j\right)} \left(x^t_{j-1}\right) - \nabla f_{\sigma_t\left(j\right)} \left(x^t_0\right)}\nonumber\\
&\leq \gamma \sum_{i=1}^n L \sum_{j=1}^{i-1} 3nL\gamma(\norm{\nabla F(x^t_0)} + \delta)\nonumber\\
&\leq 3 \gamma^2L^2n^3(\norm{\nabla F(x^t_0)} + \delta).\label{e5}
\end{align}
\begin{align}
\norm{C^t} &\leq \sum_{i=1}^n \left[\int_{0}^{\norm{x^t_{i-1}-x^t_0}} \norm{H_{\sigma_t\left(i\right)}\left(x^t_0+t\frac{x^t_{i-1}-x^t_0}{\norm{x^t_{i-1}-x^t_0}}\right)-H_{\sigma_t\left(i\right)}}dt\right]\nonumber\\
&\leq \sum_{i=1}^n \left[L_H\max\left\{\norm{x^t_{i-1}-x^*}, \norm{x^t_{0}-x^*}\right\} \norm{x^t_{i-1} - x^t_0}\right]\nonumber\\
&\leq nL_H Dn\gamma G. \label{enew1}
\end{align}

Using~\eqref{e3} and ~\eqref{e4}, we can decompose the inner product of $x^t_0 - x^*$ and $\Exp\left[R^t\right]$ into:
\begin{align}
-2\gamma \left\langle x^t_0 - x^*, \Exp\left[R^t\right] \right\rangle &= -2\gamma \left\langle x^t_0 - x^*, \Exp\left[A^t\right] + \Exp\left[B^t\right]+ \Exp\left[C^t\right] \right\rangle\nonumber\\
&= -2\gamma \left\langle x^t_0 - x^*, \Exp\left[A^t\right] \right\rangle - 2\gamma \left\langle x^t_0 - x^*, \Exp\left[B^t\right] \right\rangle- 2\gamma \left\langle x^t_0 - x^*, \Exp\left[C^t\right] \right\rangle\nonumber\\
&= \gamma^2 n\left(n-1\right) \left\langle x^t_0 - x^*, \Exp\limits_{i\neq j} H_{i} \nabla f_{j}\left(x^t_0\right) \right\rangle -2\gamma \left\langle x^t_0 - x^*, \Exp\left[B^t\right] \right\rangle- 2\gamma \left\langle x^t_0 - x^*, \Exp\left[C^t\right] \right\rangle.\label{e6}
\end{align}

For the first term in~\eqref{e6}, there is
\begin{align}
&\ \ \ \ \gamma^2 n\left(n-1\right) \left\langle x^t_0 - x^*, \Exp\limits_{i\neq j} H_i \nabla f_j\left(x^t_0\right)\right\rangle\nonumber\\
&= \gamma^2 n\left(n-1\right)\Exp\limits_{i\neq j} \left\langle H_i \left(x^t_0 - x^*\right),  \nabla f_j\left(x^t_0\right) - \nabla f_j\left(x^*\right)\right\rangle + \gamma^2 n\left(n-1\right) \left\langle x^t_0 - x^*, \Exp\limits_{i\neq j} H_i \nabla f_j\left(x^*\right)\right\rangle\nonumber\\
&\leq \gamma^2n^2 \Exp\limits_{i, j} \left\langle \nabla f_i\left(x^t_0\right) - \nabla f_i\left(x^*\right),\nabla f_j\left(x^t_0\right) - \nabla f_j\left(x^*\right) \right\rangle + \gamma^2n\left(n-1\right)D\norm{\Delta} \nonumber\\
&\ \ \ \ \ \ \ \ +\gamma^2 n\left(n-1\right)\Exp\limits_{i\neq j} \left\langle H_i \left(x^t_0 - x^*\right) - \left(\nabla f_i\left(x^t_0\right) - \nabla f_i\left(x^*\right)\right),  \nabla f_j\left(x^t_0\right) - \nabla f_j\left(x^*\right)\right\rangle\nonumber\\
&\leq \gamma^2n^2 \norm{\nabla F\left(x^t_0\right)}^2 + \gamma^2n\left(n-1\right)D\norm{\Delta}  + \gamma^2n\left(n-1\right)L_H L\norm{x^t_0 - x^*}^3. \label{e7}
\end{align}
Here we introduce variable $\Delta = \Exp_{i\neq j} \left[H_i \nabla f_j(x^*)\right]$ for simplicity of notation, with $i, j$ uniformly sampled from all pairs of different indices. The last inequality is because of 
\begin{align*}
\norm{H_i\left(x^t_0 - x^*\right) - \left(\nabla f_i\left(x^t_0\right) - \nabla f_i\left(x^*\right)\right)} &=\norm{ H_i\left(x^t_0 - x^*\right) - \int_{x^*}^{x^t_0} H_i\left(x\right) dx}\\
&=\norm{\int_{x^*}^{x^t_0} \left(H_i - H_i\left(x\right)\right)dx}\\
&\leq\int_{0}^{\norm{x^t_0-x^*}} \norm{H_i - H_i\left(x^* + t\frac{x^t_0-x^*}{\norm{x^t_0-x^*}}\right)}dt\\
&\leq L_H \norm{x^t_0 - x^*}^2.
\end{align*}

For the second term in~\eqref{e6}, we use~\eqref{e5} and have the bound
\begin{align}
-2\gamma \left\langle x^t_0 - x^*, \Exp\left[B^t\right]\right\rangle &\leq 6\gamma^3n^3L^2D(\norm{\nabla F(x^t_0)} + \delta). \label{e8}
\end{align}

For the third term in~\eqref{e6}, we use~\eqref{enew1} and have the bound
\begin{align}
-2\gamma \left\langle x^t_0 - x^*, \Exp\left[C^t\right]\right\rangle &\leq 2\gamma^2n^2 L_HD^2G. \label{enew2}
\end{align}

Substituting~\eqref{e7}~\eqref{e8} and~\eqref{enew2} back to~\eqref{e6}, we get 
\begin{align}
-2\gamma \left\langle x^t_0 - x^*, \Exp\left[R^t\right] \right\rangle &\leq  \gamma^2n^2 \norm{\nabla F\left(x^t_0\right)}^2 + \gamma^2n\left(n-1\right)D\norm{\Delta} + 6\gamma^3n^3L^2D(\norm{\nabla F(x^t_0)} + \delta) \nonumber\\
&\ \ \ \ \ +\gamma^2n^2L_H (LD^3 + 2D^2G). \label{e10}
\end{align}

Furthermore, we have
\begin{nonumber}
\begin{align}
\Exp\left[\norm{R^t}^2\right] &\leq \left[3n^2L\gamma(\norm{\nabla F(x^t_0)} + \delta)\right]^2\\
&\leq 18n^4L^2\gamma^2 (\norm{\nabla F(x^t_0)}^2 + \delta^2).
\end{align}
\end{nonumber}

Inequality~\eqref{e2} can be simplified to:
\begin{align}
\Exp[\norm{x^{t}_n - x^*}^2] &\leq   \norm{x^t_0-x^*}^2 - (2n\gamma-3n^2\gamma^2L)\left[F(x^t_0) - F(x^*)\right] + \gamma^2n\left(n-1\right)D\norm{\Delta} +\gamma^2n^2L_H (LD^3 + 2D^2G)\nonumber\\
&\ \ \ \ + 6\gamma^3n^3L^2D(\norm{\nabla F(x^t_0)} + \delta)+ 36n^4L^2\gamma^4 (\norm{\nabla F(x^t_0)}^2 + \delta^2). \nonumber\\
&\leq  \norm{x^t_0-x^*}^2 - (2n\gamma-3n^2\gamma^2L)\left[F(x^t_0) - F(x^*)\right] + \gamma^2n\left(n-1\right)D\norm{\Delta}+\gamma^2n^2L_H (LD^3 + 2D^2G) \nonumber\\
&\ \ \ \ +12\gamma^2n^2 \norm{\nabla F(x^t_0)}^2 + 12\gamma^4n^4L^4D^2 + 6\gamma^3n^3L^2D\delta+ 36n^4L^2\gamma^4 (\norm{\nabla F(x^t_0)}^2 + \delta^2).\label{e11}
\end{align}
By the definition of $\gamma$, there is \[36n^4L^2\gamma^4 \leq n^2\gamma^2,\] \[16n^2\gamma^2L \leq n\gamma.\] So there is 
\begin{align}
\Exp[\norm{x^{t}_n - x^*}^2] &\leq \norm{x^t_0-x^*}^2 - (2n\gamma-16n^2\gamma^2L)\left[F(x^t_0) - F(x^*)\right] + \gamma^2n\left(n-1\right)D\norm{\Delta} \nonumber\\
&\ \ \ \  +\gamma^2n^2L_H (LD^3 + 2D^2G)+ 12\gamma^4n^4L^4D^2 + 6\gamma^3n^3L^2D\delta+ 36n^4L^2\gamma^4\delta^2.\nonumber\\
&\leq \norm{x^t_0-x^*}^2 - n\gamma\left[F(x^t_0) - F(x^*)\right] + \gamma^2n^2D\norm{\Delta} \nonumber\\
&\ \ \ \  +\gamma^2n^2L_H (LD^3 + 2D^2G)+ 12\gamma^4n^4L^4D^2 + 6\gamma^3n^3L^2D\delta + 36n^4L^2\gamma^4\delta^2.\nonumber
\end{align}

Furthermore, there is
\begin{align}
n\gamma \left[F(x^t_0) - F(x^*)\right] &\leq \norm{x^t_0-x^*}^2 - \Exp[\norm{x^{t}_n - x^*}^2]  + \gamma^2n^2\left(D\norm{\Delta} + L_H LD^3 + 2L_HD^2G\right)\nonumber \\
&\ \ \ \ + 6\gamma^3n^3L^2D\delta+ n^4\gamma^4 (12L^4D^2 + 36 L^2 \delta^2) . \label{e12}
\end{align}
Taking expectation of~\eqref{e12} and summing over all epochs, we have:
\begin{align}
T\gamma \left[F(\bar{x}) - F(x^*)\right] \leq D^2 + \gamma^2Tn(D\norm{\Delta}+L_H LD^3 + 2L_HD^2G) + T\gamma^3n^2L^26D\delta+ T\gamma^4n^3 (12L^4D^2 + 36 L^2 \delta^2). \label{e13}
\end{align}
Substituting the step size into~\eqref{e13}, we have
\begin{align}
F(\bar{x}) - F(x^*) &\leq \frac{D^2}{T}\max\left\{16nL, \sqrt{\frac{ Tn\left(\norm{\Delta}+L_H LD^2 + 2L_HDG\right)}{D}}, \left(\frac{Tn^2L^2\delta}{D}\right)^\frac{1}{3}, (Tn^3L^4)^\frac{1}{4}\right\}  \nonumber\\
&\ \ \ \ + \frac{D\sqrt{nD\left(\norm{\Delta} + L_H LD^2 + 2L_HDG\right)}}{\sqrt{T}}+ \frac{6D(D^2n^2L^2\delta)^\frac{1}{3}}{T^\frac{2}{3}} + \frac{n^\frac{3}{4}}{T^\frac{3}{4}} \left(12LD^2 + \frac{36\delta^2}{L}\right) \nonumber\\
&\leq \frac{2D\sqrt{nD\left(\norm{\Delta}+L_H LD^2 + 2L_HDG\right)}}{\sqrt{T}} + \frac{7D(D^2n^2L^2\delta)^\frac{1}{3}}{T^\frac{2}{3}} + \frac{n^\frac{3}{4}}{T^\frac{3}{4}} \left(13LD^2 + \frac{36\delta^2}{L}\right) + \frac{16D^2nL}{T}. \nonumber
\end{align}
Obviously, this result is of the form \[\frac{2D\sqrt{nD\left(\norm{\Delta}+L_H LD^2 + 2L_HDG\right)}}{\sqrt{T}} + \Oc\left(\left(\frac{n}{T}\right)^\frac{2}{3}\delta^\frac{1}{3} + \left(\frac{n}{T}\right)^\frac{3}{4}\right)\]

\end{proof}

\section{Proof of Theorem~\ref{last}}
\begin{proof}
For both \sgd and \rsgd, we use $s(i)$ to denote the index of component function picked in the $i$th iteration. We have the following inequality
\begin{nonumber}
\begin{align}
||x_{t} - x^*||^2 &= ||x_{t-1} - x^*||^2 -2\gamma \langle x_{t-1} - x^*, \nabla f_{s(t)}(x_{t-1}) \rangle + \gamma^2 ||\nabla f_{s(t)} (x_{t-1})||^2\\
&= ||x_{t-1} - x^*||^2 -2\gamma \langle x_{t-1} - x^*, \nabla f_{s(t)}(x_{t-1}) - \nabla f_{s(t)}(x^*) \rangle + \gamma^2 ||\nabla f_{s(t)} (x_{t-1})||^2\\
&\leq ||x_{t-1}-x^*||^2 -2\gamma (\frac{||\nabla f_{s(t)}(x_{t-1}) - \nabla f_{s(t)}(x^*)||^2}{L_{s(t)}+\mu_{s(t)}} + \frac{L_{s(t)} \mu_{s(t)}}{L_{s(t)}+\mu_{s(t)}} ||x_{t-1}-x^*||^2  ) + \gamma^2 ||\nabla f_{s(t)} (x_{t-1})||^2\\
&= (1-2\gamma\frac{L_{s(t)} \mu_{s(t)}}{L_{s(t)}+\mu_{s(t)}} )||x_{t-1}-x^*||^2 -\gamma (\frac{2}{L_{s(t)}+\mu_{s(t)}} -\gamma) ||\nabla f_{s(t)} (x_{t-1})||^2\\
&\leq (1-2\gamma\frac{L_{s(t)} \mu_{s(t)}}{L_{s(t)}+\mu_{s(t)}} + \mu_{s(t)}^2 \gamma^2 - 2\gamma \frac{\mu_{s(t)}^2}{L_{s(t)} + \mu_{s(t)}}) ||x_{t-1}-x^*||^2\\
&= (1-2\gamma\mu_{s(t)}+\mu_{s(t)}^2\gamma^2) ||x_{t-1}-x^*||^2\\
&= (1-\gamma\mu_{s(t)})^2 ||x_{t-1}-x^*||^2.
\end{align}
\end{nonumber}
So we have \[\Exp||x_T - x^*||^2 \leq \Exp[\prod_{i=1}^T (1-\gamma\mu_{s(t)})^2] ||x_{t-1}-x^*||^2.\] By the AM-GM inequality, we know the term $\Exp[\prod_{i=1}^T (1-\gamma\mu_{s(t)})^2] $ for \rsgd is no larger than that of \sgd. Also, this bound is tight when we consider $f_i(x) = \frac{\mu_i}{2}||x-x^*||^2$, which completes the proof. 

\end{proof}

\section{\sgd under Polyak-\L ojasiewicz condition}

For the completeness of the paper, we include the following analysis of \sgd under Polyak-\L ojasiewicz condition.
\begin{theorem}
For finite sum problem satisfying Polyak-\L ojasiewicz condition with parameter $\mu$, Lipschitz constant $L$, setting step size \[\gamma = \frac{\log T}{\mu T}, \]
there is \[F(x_T) - F^* \leq \Oc(\frac{1}{T}).\]
\end{theorem}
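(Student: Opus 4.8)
The plan is to run the textbook descent-lemma argument for \sgd, working directly with the function-value gap $F(x_k) - F^*$ rather than distance to a minimizer (the latter need not be unique under the Polyak-\L{}ojasiewicz condition). First I would note that $F$ is $L$-smooth, being the average of the $L$-smooth components $f_i$, and apply the standard quadratic upper bound along the update direction $x_k - x_{k-1} = -\gamma\nabla f_{s(k)}(x_{k-1})$ to get
\[
F(x_k) \leq F(x_{k-1}) - \gamma\langle \nabla F(x_{k-1}), \nabla f_{s(k)}(x_{k-1})\rangle + \tfrac{L\gamma^2}{2}\norm{\nabla f_{s(k)}(x_{k-1})}^2.
\]
Taking the conditional expectation over the uniformly drawn index $s(k)$, using unbiasedness $\Exp[\nabla f_{s(k)}(x_{k-1})] = \nabla F(x_{k-1})$ together with the gradient bound $\norm{\nabla f_i(x)} \leq G$, this becomes
\[
\Exp[F(x_k)\mid x_{k-1}] \leq F(x_{k-1}) - \gamma\norm{\nabla F(x_{k-1})}^2 + \tfrac{L\gamma^2 G^2}{2}.
\]

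Next I would invoke the PL inequality $\norm{\nabla F(x_{k-1})}^2 \geq 2\mu(F(x_{k-1}) - F^*)$ to convert the gradient-norm term into suboptimality. Subtracting $F^*$ and taking full expectation yields the scalar one-step recursion
\[
\delta_k \leq (1-2\gamma\mu)\delta_{k-1} + \tfrac{L\gamma^2 G^2}{2}, \qquad \delta_k := \Exp[F(x_k) - F^*].
\]
Unrolling this over $T$ iterations and bounding the resulting geometric series by $\sum_{k=0}^{T-1}(1-2\gamma\mu)^k \leq \tfrac{1}{2\gamma\mu}$ gives
\[
\delta_T \leq (1-2\gamma\mu)^T \delta_0 + \tfrac{L\gamma G^2}{4\mu}.
\]

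Finally I would substitute $\gamma = \frac{\log T}{\mu T}$, so that $2\gamma\mu = \frac{2\log T}{T}$. The contraction factor obeys $(1-2\gamma\mu)^T \leq e^{-2\gamma\mu T} = e^{-2\log T} = T^{-2}$, so the transient term decays as $\delta_0/T^2$, while the noise-floor term equals $\frac{LG^2\log T}{4\mu^2 T}$, which is $\Oc(1/T)$ after hiding the logarithmic factor. Summing the two yields $\delta_T = \Oc(1/T)$, as claimed.

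The only real obstacle is the familiar tension between the contraction rate and the variance floor $\tfrac{L\gamma^2 G^2}{2}$ introduced by stochastic sampling: the step size must be small enough that $1-2\gamma\mu \in (0,1)$ (which holds once $2\log T < T$) yet large enough that the transient term does not dominate. The choice $\gamma = \Theta(\log T/(\mu T))$ is calibrated precisely so the transient decays like $T^{-2}$, strictly faster than the $T^{-1}$ noise floor, leaving the floor as the dominant contribution. Notably, neither second-order information nor any permutation structure enters, which is exactly why this \sgd bound is far simpler than the \rsgd analyses of Theorems~\ref{thm41} and~\ref{thm43}.
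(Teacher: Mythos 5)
Your proof is correct and follows essentially the same route as the paper's: the descent lemma under $L$-smoothness, unbiasedness of the stochastic gradient plus the bound $\norm{\nabla f_i(x)}\leq G$, the PL inequality to obtain the recursion $\delta_k \leq (1-2\gamma\mu)\delta_{k-1} + \tfrac{L\gamma^2G^2}{2}$, and the step size $\gamma = \frac{\log T}{\mu T}$ to make the transient decay as $T^{-2}$. The only (immaterial) difference is that you bound the accumulated noise by the geometric-series sum $\tfrac{1}{2\gamma\mu}$, giving a floor of $\tfrac{L\gamma G^2}{4\mu}$ with one fewer $\log T$ factor than the paper's cruder bound $\tfrac{L}{2}T\gamma^2G^2$; both are $\Oc(1/T)$ once logarithms are hidden, and your version also silently corrects a factor-of-two typo in the paper's intermediate PL step.
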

\begin{proof}
We have the following one iteration for SGD with step size $\gamma$: 
\begin{align}
x_{t+1} = x_t - \gamma \nabla f_{s(t)}(x_t).
\end{align}

Given $x_t$, there is randomness over index 
\begin{align}
\Exp [F(x_{t+1})] - F^* &\leq F(x_t) - \gamma \Exp [\langle \nabla F(x_t), \nabla f_{s(t)}(x_t) \rangle] + \frac{L}{2} \gamma^2 \Exp[ \norm{\nabla f_i(x_t)}^2] - F^*\\
&= F(x_t) - \gamma  \langle \nabla F(x_t), \nabla F(x_t) \rangle +\frac{L}{2}  \gamma^2 \Exp[ \norm{\nabla f_i(x_t)}^2]- F^*\\
&\leq F(x_t) - \gamma \mu [F(x_t) - F^*] + \frac{L}{2} \gamma^2 G^2- F^*\\
&= (1-2\gamma \mu) [F(x_t) - F^*] + \frac{L}{2} \gamma^2 G^2.
\end{align}

The first inequality is because \[F(x) \leq F(y) + \langle x-y, \nabla F(y)  \rangle + \frac{L}{2} \norm{x-y}^2.\]
The second inequality is because of the definition of LP condition.

Expanding over iterations leads to

\[\Exp[F(x_T) - F^*] \leq (1-2\gamma \mu)^T [F(x_0) - F^*] + \frac{L}{2} T \gamma^2 G^2.\]

Setting $\gamma = \frac{\log T}{\mu T}$ leads to a $O(\frac{1}{T})$ convergence of $F(x_T) - F^*$. 
\end{proof}





\end{document}